\documentclass{amsart}
\usepackage{amsmath}
\usepackage{amsthm}
\usepackage{amsfonts}
\usepackage{amssymb}
\usepackage[all]{xy}
\usepackage{color}
\usepackage{tikz}
\usetikzlibrary{matrix}
\usepackage{verbatim}
\usepackage{mabliautoref}
\usepackage{tikz-cd}

\usepackage{enumitem}

\setlist[itemize,enumerate]{leftmargin=20pt}

\usepackage[left=1.02in,top=1.0in,right=1.02in,bottom=1.0in]{geometry}

\author{Zsolt Patakfalvi}
\address{
EPFL\\
SB MATHGEOM CAG  \\
MA C3 635 (B\^atiment MA) \\
Station 8 \\
CH-1015 Lausanne}
\email{zsolt.patakfalvi@epfl.ch}
\author{Joe Waldron}
\address{
Former:\newline
EPFL\\
SB MATHGEOM CAG  \\
MA C3 615 (B\^atiment MA) \\
Station 8 \\
CH-1015 Lausanne}
\address{
	Current:\newline
	Princeton University\\
	Fine Hall\\
	Washington Road \\
	NJ 08544 \\
	USA}
\email{jaw8@math.princeton.edu }

\newcommand{\PP}{\mathbb{P}}

\newcommand{\lin}{\sim}
\newcommand{\isom}{\cong}
\newcommand{\num}{\equiv}

\newcommand{\bF}{\mathbb{F}}
\newcommand{\bQ}{\mathbb{Q}}
\newcommand{\bR}{\mathbb{R}}
\newcommand{\bZ}{\mathbb{Z}}

\newcommand{\sA}{\mathcal{A}}

\newcommand{\sF}{\mathcal{F}}
\newcommand{\sG}{\mathcal{G}}
\newcommand{\sH}{\mathcal{H}}
\newcommand{\sL}{\mathcal{L}}
\newcommand{\sM}{\mathcal{M}}
\newcommand{\sO}{\mathcal{O}}
\newcommand{\sT}{\mathcal{T}}

\newcommand{\sHom}{{\mathcal H }{\it om}}

\newcommand{\im}{\mathrm{im}}

\newcommand{\sC}{\mathcal{C}}

\newcommand{\sX}{\mathcal{X}}
\newcommand{\sY}{\mathcal{Y}}
\newcommand{\sZ}{\mathcal{Z}}

\newcommand{\Q}{\mathbb{Q}}

\newcommand{\ov}{\overline{v}}

\DeclareMathOperator{\red}{red}
\DeclareMathOperator{\codim}{codim}

\DeclareMathOperator{\spec}{Spec}
\DeclareMathOperator{\reg}{reg}

\DeclareMathOperator{\Pic}{Pic}
\DeclareMathOperator{\Fr}{Fr}
\DeclareMathOperator{\Det}{det}
\DeclareMathOperator{\rk}{rk}
\DeclareMathOperator{\Der}{Der}
\DeclareMathOperator{\Hom}{Hom}
\DeclareMathOperator{\Spec}{Spec}

\DeclareMathOperator{\Supp}{Supp}
\DeclareMathOperator{\Frac}{Frac}

\newcommand{\Nagataone}{\textrm{N-1 }}

\begin{document}

\title{Singularities of general fibers and the LMMP}

\begin{abstract}
We use the theory of foliations to study the relative canonical divisor of a normalized inseparable base-change.   
Our main technical theorem states that it is linearly equivalent to a divisor with positive integer coefficients divisible by $p-1$.  We deduce many consequences about the fibrations of the minimal model program: for example the general fibers of terminal Mori fiber spaces of relative dimension $2$ are normal in characteristic $p\geq 5$ and smooth in characteristic $p\geq 11$.
\end{abstract}

\maketitle

\section{Introduction}

In characteristic zero, the general fiber of a fibration 
between smooth varieties is smooth, which no longer holds in positive characteristic. Indeed,   the general fiber may be non-normal or even non-reduced.  It could be hoped that if one has sufficient control of the invariants of the varieties involved, it might be possible to bound this behavior to small primes.  An example of this is the fact that quasi-elliptic surfaces exist only in characteristics $2$ and $3$.  This can be seen as a consequence of Tate's genus change formula, which implies that if $X$ is a regular geometrically reduced curve over $k$ and $Y$ is the normalization of the base change by Frobenius of $k$ then the degree of $\omega_{Y/X}$ is divisible by $p-1$ (see \cite{tate_genus_change}, or \cite{schroer_tate} for a modern interpretation).  

We are interested in generalizing this statement to higher dimensions and to the geometrically non-reduced case. Our main motivation is to restrict bad behavior to small primes for the fibrations of the Minimal Model Program, such as Mori fiber spaces and the Iitaka fibration,  as this phenomenon seems to be a cornerstone of the Minimal Model Program in positive characteristic.  

First, we state our general result, which is the generalization of the aforementioned Tate's genus change formula. Tanaka has recently studied the effect of Frobenius base change on canonical divisors in \cite{tanaka_behaviour}, essentially proving that $\omega_{Y/X}$ is effective in arbitrary dimensions.  We extend this by proving that it can be chosen so that its coefficients are non-negative integers divisible by $p-1$.  In particular, our result covers the case where $X$ is not geometrically reduced.  Note that the theorem as stated below is about varieties over non-closed fields. To apply it to  fibrations $f : \sX \to T$ over algebraically closed fields one simply applies it to the generic fiber of the fibration, which is a variety over the non-closed field $K(T)$.

\begin{theorem}\label{thm:generic}
Let $X$ be a normal scheme of finite type over a field $k$ of characteristic $p>0$, such that $k$ is algebraically closed in $K(X)$.  Let $k'/k$ be a field extension and $Y$ be the normalization of the reduced subscheme of $X\otimes_k k'$.  Then there is an effective Weil divisor $C$ on $Y$ such that $K_Y+(p-1)C\lin \phi^*K_X$, where $\phi:Y\to X$ is the natural map.   
\end{theorem}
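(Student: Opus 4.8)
The plan is to reduce the additional statement to a comparison of dualizing sheaves and then identify the effective divisor $\phi^*K_X - K_Y$ with the conductor of the normalization. Write $Z := X\times_k k'$ and factor $\phi$ as $Y \xrightarrow{\nu} Z \xrightarrow{g} X$, where $g$ is the base-change morphism and $\nu$ is the normalization; since $X$ is geometrically reduced, $Z$ is already reduced, so no passage to $Z_{\mathrm{red}}$ is needed and $\nu$ really is the normalization of $Z$ itself. The target is an honest equality of Weil divisors $(p-1)C = \mathfrak D$, where $\mathfrak D$ is the divisorial part of the conductor of $\nu$ on $Y$; this splits into (a) the linear equivalence $K_Y + \mathfrak D \lin \phi^* K_X$, and (b) the divisibility of every coefficient of $\mathfrak D$ by $p-1$.

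For (a) I would prove the two halves separately. First, because $X$ is proper over $k$ it carries a dualizing sheaf $\omega_X$, and since $g$ is flat (base change by $k'/k$) the dualizing complex is compatible with it; as $X$ is $S_2$ and $Z$ is reduced one checks that $Z$ is $G_1$ and $S_2$ (this is exactly where geometric reducedness is used, guaranteeing the fibers of $g$ stay reduced so that $Z$ is Gorenstein in codimension one and $S_2$), whence the dualizing complexes concentrate in a single degree and give $\omega_Z \isom g^*\omega_X$, i.e. $K_Z \lin g^* K_X$. Second, Grothendieck duality for the finite morphism $\nu$, $\nu_*\omega_Y \isom \sHom_{\sO_Z}(\nu_*\sO_Y,\omega_Z)$, yields in divisorial form $K_Y + \mathfrak D \lin \nu^* K_Z$ with $\mathfrak D$ effective. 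Composing, $K_Y + \mathfrak D \lin \nu^* g^* K_X = \phi^* K_X$, which is (a); and because both steps are genuine sheaf isomorphisms, once compatible canonical divisors are fixed the difference $\phi^* K_X - K_Y$ equals $\mathfrak D$ as an actual effective divisor, not merely as a class.

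For (b) I would invoke the local, codimension-one analysis underlying Theorem~\ref{thm:generic}: at the generic point of each prime divisor $E \subset Y$ the situation is a finite extension of one-dimensional rings arising from an inseparable field extension, and the computation proving the main theorem shows that the coefficient of $\phi^* K_X - K_Y$ along $E$ is a non-negative integer multiple of $p-1$. Since by (a) this coefficient is precisely the conductor coefficient $\mathrm{coeff}_E \mathfrak D$, every coefficient of $\mathfrak D$ is divisible by $p-1$. Hence $C := \tfrac{1}{p-1}\mathfrak D$ is an effective integral Weil divisor with $(p-1)C = \mathfrak D$ and $K_Y + (p-1)C \lin \phi^* K_X$, as required.

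The step I expect to be the main obstacle is part (a): matching the two sheaf-level statements so that $\phi^* K_X - K_Y$ is \emph{literally} the conductor divisor. Concretely, one must verify that $Z$ remains $S_2$ and $G_1$ after a possibly wildly inseparable base change (the reason geometric reducedness, and working in codimension one, are essential), confirm that the base-change isomorphism for the dualizing complex is available in exactly this generality (this is where properness of $X$ enters), and ensure that the invariant produced by the duality formula for $\nu$ is the conductor and not some other codimension-one quantity. Once the identification of classes is upgraded to an equality of divisors, combining it with the divisibility already established for the main theorem finishes the proof essentially formally.
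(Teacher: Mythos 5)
There is a genuine gap, and it sits exactly where the paper's real work is. Your proposal only engages with the ``in addition'' clause: part (a) (the identification $K_Y+\mathfrak D\lin \phi^*K_X$ with $\mathfrak D$ the divisorial part of the conductor, via flat base change of dualizing sheaves and Grothendieck duality for the normalization) is essentially sound and matches what the paper does in \autoref{lem:two_conductors} and in the proof of \autoref{thm:foliation}.\autoref{itm:conductor_equality} --- but this much is already Tanaka's effectivity result, which the introduction cites as the starting point. The theorem's actual content is the divisibility by $p-1$, and for that your step (b) is circular: you ``invoke the local, codimension-one analysis underlying Theorem~\autoref{thm:generic}'' and assert that ``the computation proving the main theorem'' gives the divisibility --- but that computation is precisely what is being asked for. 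Note also that $\phi^*K_X-K_Y$ is only a linear equivalence class; its ``coefficient along $E$'' is meaningless until you fix the representative via the duality/conductor map, and proving that \emph{that} coefficient is a multiple of $p-1$ is the hard local statement. In the paper it occupies all of \autoref{subsec:algebraic_lemmas}: one shows $E\geq (p-1)C$ at each codimension-one point by a delicate DVR argument (applying a derivation $\Delta$ with $\Delta(t)=u^a$ repeatedly, the $\Theta$-set bookkeeping in Case 1 and the $\Delta^{p-1}$ trick in Case 2 of \autoref{lem:conductor}), and then uses properness together with the global linear equivalence $E\lin(p-1)C$ to force equality. Without properness one only gets an inequality, which is why the hypothesis appears; your sketch never uses it for this purpose.

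Separately, the first (main) paragraph of the theorem is not proved at all in your proposal. It concerns an arbitrary field extension $k'/k$ and an $X$ that need not be geometrically reduced, in which case $Z=X\times_k k'$ must be replaced by $Z_{\red}$, the morphism $\nu$ is no longer the normalization of $Z$, $\omega_{Z_{\red}}\not\cong g^*\omega_X$, and the conductor interpretation breaks down entirely. The paper handles this by spreading out over a finitely generated $\bF_p$-algebra to reduce to a fibration over a perfect field, decomposing $k'/k$ into a purely transcendental part, a tower of height-one inseparable extensions, and a separable extension, and then proving for each height-one step the foliation-theoretic formula $\omega_{\sZ/\sX}\cong(\det\sF)^{[p-1]}$ together with the effectivity of $-\det\sF$, the latter via the embedding $\sF\hookrightarrow g^*\sH\subseteq\sO^{\oplus d}$ obtained from the cotangent sequence. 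None of this machinery --- which is where both the factor $p-1$ and the effectivity actually come from --- is present or replaced by an alternative argument in your proposal.
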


In the case where $X\otimes_k k'$ is reduced, it turns out that our $C$ is determined canonically, and while its method of construction is different, $(p-1)C$ turns out to be equal to the usual conductor of the normalization.

\begin{theorem}\label{thm:conductor}
	Under the notations and assumptions of \autoref{thm:generic}, if in addition $X\otimes_k k'$ is reduced and $X$ is proper over $k$, then $C$ can be chosen so that $(p-1)C$ is equal to the divisorial part of the usual conductor scheme.  In other words, the coefficients of the conductor divisor are divisible by $p-1$.
\end{theorem}


The proof of \autoref{thm:generic} and \autoref{thm:conductor} is geometric, that is, we reduce it to fibrations via taking adequate subrings of $k$ and $k'$ that are finitely generated as algebras over $\bF_p$. Then, the main idea is to view the base change and the normalization of the corresponding fibration as a quotient by a foliation.  The most important step is then to show that the Weil divisor which appears in the canonical bundle formula for quotient by a foliation can be taken to be effective in our situation.   

As a result, we obtain the following by restricting to the general fiber:

\begin{corollary}
\label{cor:singularities_of_general_fiber}
Let $\sX$ be a normal variety over a perfect field $k$ of characteristic $p>0$, and let $f : \sX \to T$ be a proper morphism to a normal variety over $k$, whose generic fiber is geometrically reduced. Then, the divisorial part of the conductor on the normalization of a general fiber $\sX_t$ has positive integer coefficients divisible by $p-1$.

\end{corollary}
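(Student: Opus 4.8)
The plan is to deduce the statement from \autoref{thm:generic} applied to the generic fiber of $f$, and then to transport the conclusion from the geometric generic fiber to a general closed fiber by a spreading-out argument.

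First I would pass to the generic fiber. Let $\eta \in T$ be the generic point, $K = K(T)$, and $X = \sX_\eta$ the generic fiber. Since $\sX$ is normal and integral and $f$ is dominant, $X$ is integral, normal, and proper over $K$ with $K(X) = K(\sX)$, and by hypothesis it is geometrically reduced over $K$. To apply \autoref{thm:generic} I must arrange that the base field is algebraically closed in $K(X)$, so let $L$ be the algebraic closure of $K$ in $K(X)$. Then $X$ is a normal variety over $L$ with $L$ algebraically closed in $K(X)$, still proper over $L$ (by cancellation, since $\Spec L \to \Spec K$ is separated) and still geometrically reduced over $L$ (because $\overline{L} = \overline{K}$). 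Geometric reducedness of $X$ forces $K(X)/K$ to be separable, hence $L/K$ is a finite separable extension. Replacing $f$ by its canonical factorization through the normalization $T'$ of $T$ in $L$ (which is finite, and étale over a dense open subset because $L/K$ is separable), a general closed fiber of $f$ becomes a disjoint union of general closed fibers of $\sX \to T'$, whose conductors assemble to that of $\sX_t$; thus I may assume $K$ is algebraically closed in $K(X)$.

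Next I would apply \autoref{thm:generic} to $X/K$ with $k' = \overline{K}$. Since $X$ is geometrically reduced, $(X \times_K \overline{K})_{\mathrm{red}} = X \times_K \overline{K} = \sX_{\overline{\eta}}$ is the geometric generic fiber, and $Y = \sX_{\overline{\eta}}^{\nu}$ is its normalization, so the second part of the theorem shows that the divisorial part of the conductor of $Y \to \sX_{\overline{\eta}}$ equals $(p-1)C$ for an effective integral Weil divisor $C$. Hence the divisorial conductor of the normalized geometric generic fiber has positive integer coefficients divisible by $p-1$.

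It remains to transfer this to a general closed fiber, which is the main technical point. By generic flatness I may shrink $T$ so that $f$ is flat, and shrink further so that every closed fiber is geometrically reduced, this locus being open and containing $\eta$. For a general closed point $t$, the residue field $\kappa(t)$ is finite over the perfect field $k$, hence perfect, so $\overline{\kappa(t)}/\kappa(t)$ is separable algebraic; along the resulting pro-étale cover both the normalization and the divisorial conductor are compatible, so it suffices to treat the geometric closed fiber $\sX_{\overline{t}}$. Finally I would descend $Y \to \sX_{\overline{\eta}}$ together with its conductor to a model finite over $\sX \times_T U$, for a dense open $U$ inside a finite base change of $T$ realizing a field of definition of $Y$, which after shrinking $U$ is flat over $U$ with geometrically normal fibers and has conductor a relative divisor flat over $U$; then the fiberwise coefficients are constant and equal to those of $Y$. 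Therefore the divisorial conductor of $\sX_{\overline{t}}$, and hence of $\sX_t$, has positive integer coefficients divisible by $p-1$. The delicate step is exactly this spreading-out: normalization does not commute with arbitrary base change, so one must verify that after shrinking the base the relative normalization has geometrically normal fibers and that its conductor is flat over the base, so that specialization preserves the coefficients.
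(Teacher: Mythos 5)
Your overall strategy --- pass to the generic fiber, apply \autoref{thm:generic} over $\overline{K(T)}$, and then spread out --- differs from the paper's, which instead base-changes the whole family by a high power of the relative Frobenius $T^{(n)} \to T$, normalizes, proves the divisibility for the horizontal part of the conductor of the resulting family directly via \autoref{thm:foliation}, and only at the end restricts to fibers. Your preliminary reductions are sound: factoring $f$ through the normalization of $T$ in the algebraic closure $L$ of $K(T)$ in $K(\sX)$ (finite separable by geometric reducedness, so general fibers decompose accordingly), and then invoking the ``in addition'' clause of \autoref{thm:generic} to identify the divisorial conductor of the normalized geometric generic fiber with $(p-1)C$.

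The gap is in the final specialization step. You assert that after spreading out, the conductor becomes ``a relative divisor flat over $U$'' and that ``the fiberwise coefficients are constant and equal to those of $Y$.'' Flatness of (the closure of) the generic conductor over $U$ only controls the fibers of \emph{that divisor}; it does not show that the fiber of that divisor \emph{is} the conductor of $\sY_{U,t} \to \sX_t$, which is what the statement is about. The conductor is the annihilator of $\pi_*\sO_{\sY_U}/\sO_{\sX\times_T U}$, and the formation of an annihilator does not commute with the non-flat base change $\otimes\,\kappa(t)$; no flatness property of the resulting divisor substitutes for this commutation. This is exactly the point the paper has to work for: it identifies the conductor with the image of the evaluation map $\Hom_R(S,R)\to S$, $\phi\mapsto\phi(1)$ (\autoref{lem:two_conductors}), and then invokes the compatibility of this trace map with restriction to general fibers, \cite[Lemma 2.18]{psz}, after first shrinking so that $\sX$ and $\sY$ are regular outside codimension two. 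Your argument needs this (or an equivalent cohomology-and-base-change statement for $\sHom(\pi_*\sO_{\sY_U},\sO_{\sX\times_T U})$) inserted at the last step; with that ingredient the rest of your route does go through.
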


Next we deduce some geometric consequences of \autoref{thm:generic}, bounding the existence of bad fibrations. We start with a result which applies to arbitrary dimensional varieties.

\begin{theorem}\label{cor:general_dimension}
If $f : \sX \to T$ is a proper fibration between normal varieties over a field $k$ of characteristic $p>0$,  $f_*\sO_{\sX}=\sO_T$, such that a general fiber of $f$ is not normal and one of the following holds:
\begin{enumerate}
\item $K_{\sX} \equiv_f 0$ or 
\item \label{itm:smooth:pseff} $\sX$ is $\bQ$-factorial, $-K_{\sX}$ is ample$/T$ and $\rho(\sX/T)=1$
\end{enumerate}
then the normalization $Y$ of the maximal reduced subscheme of the general fiber cannot be smooth if $p> p_0$ where
$$
p_0=
\begin{cases}
\dim(X)-\dim(T)+2, & \text{in case (a)} \\
\dim(X)-\dim(T)+1, & \text{in case (b)}
\end{cases}
$$

In particular, the above statements pertain to the Iitaka fibration and to Mori fiber spaces respectively. 
\end{theorem}

We can say more for fibrations of relative dimension $2$, in particular for Mori fiber spaces with terminal singularities:

\begin{theorem}\label{cor:normality_short}
Let $X$ be a normal, projective and Gorenstein surface over a field $k$ of characteristic $p>0$, with $k$ algebraically closed in $K(X)$, and such that $-K_{X}$ is ample.  If $p>3$ then $X$ is geometrically normal.
\end{theorem}






Further details about the possibilities for the normalized base change are given before the proof as \autoref{cor:normality}.
Combining \autoref{cor:normality_short} with arguments from \cite[5.1]{hirokado} and \cite{schroer} we also deduce generic smoothness for terminal Fano fibrations in sufficiently large characteristics:

\begin{corollary}\label{cor:generic_smoothness}
Let $f:\sX\to Z$ be a projective fibration of relative dimension $2$ with $f_*\sO_{\sX}=\sO_Z$  over a perfect field of characteristic $p\geq 11$, such that $-K_{\sX}$ is ample over $Z$.  Then a general fiber of $f$ is smooth.
\end{corollary}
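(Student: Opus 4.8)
The plan is to pass to the generic fiber of $f$, reduce the geometric normality to \autoref{cor:normality}, and then upgrade normality to smoothness via \cite[5.1]{hirokado}. Write $k=K(C)$ and let $X=\sX_\eta$ be the generic fiber over $k$. Since $\sX$ is terminal it is regular in codimension two, so its singular locus is a finite set of closed points; as these map to finitely many closed points of $C$, they are disjoint from $X$. Hence $X$ is regular, and in particular normal and Gorenstein. It is a projective surface over $k$, and $f_*\sO_{\sX}=\sO_C$ guarantees that $k$ is algebraically closed in $K(X)$. Finally, by adjunction $K_X=(K_{\sX})|_X$ (the pullback of $K_C$ restricts trivially to a fiber), so $-K_X$ is the restriction of the $f$-ample divisor $-K_{\sX}$ and is therefore ample. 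Thus $X$ satisfies all the hypotheses of \autoref{cor:normality}. Note that the regularity of $X$ over the imperfect field $k$ is genuinely weaker than what we want, so this reduction is not vacuous.

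Applying \autoref{cor:normality} and using $p\geq 11>3$, we conclude that $X$ is geometrically normal; that is, the geometric generic fiber $X_{\ok}$ is a normal del Pezzo surface (ampleness of $-K$ is preserved under extension of the base field). By openness of the normal locus of the fibers of $f$, the general closed fiber $\sX_t$ is likewise normal, and since the residue fields at general closed points of $C$ are perfect, such fibers are in fact geometrically normal.

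It remains to rule out singularities of these normal del Pezzo fibers, which is precisely the content of \cite[5.1]{hirokado}: for $p\geq 11$, a normal del Pezzo surface occurring as a fiber of a terminal threefold Mori fiber space is smooth. Combining this with the geometric normality just established shows that $X_{\ok}$ is smooth, hence $X$ is smooth over $k$; spreading this out over a dense open subset of $C$ yields smoothness of the general fiber. The bulk of the real work lies in verifying the hypotheses of the two cited results—most delicately the regularity and Gorenstein property of the generic fiber, which rest on terminal threefold singularities being isolated, together with the compatibility of Hirokado's setup with our geometric generic fiber—while the characteristic bound $p\geq 11$ is forced entirely by \cite[5.1]{hirokado}, our contribution supplying only the geometric normality valid already for $p>3$.
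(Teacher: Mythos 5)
Your proposal is correct and follows essentially the same route as the paper: the paper's proof is the one-line observation that \autoref{cor:normality} (giving geometric normality of the generic fiber for $p>3$) verifies the hypotheses of \cite[5.1]{hirokado}, which then yields smoothness for $p\geq 11$. Your write-up simply makes explicit the verification of the hypotheses of \autoref{cor:normality} (isolated singularities of terminal threefolds, hence regularity and the Gorenstein property of the generic fiber, adjunction for $-K_X$, and $k$ algebraically closed in $K(X)$ via $f_*\sO_{\sX}=\sO_C$), which the paper leaves implicit.
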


It then follows from generic smoothness and \cite[4.8]{rational_connectedness_GFR} that if $C$ is a rational curve, $\sX$ is separably rationally connected.

\begin{corollary}\label{cor:separably_rationally_connected}
Let $f:\sX\to \mathbb{P}_k^1$ be a projective fibration with $f_*\sO_{\sX}=\sO_{\mathbb{P}^1_k}$ from a terminal threefold over a perfect field of characteristic $p\geq 11$, such that $-K_{\sX}$ is ample over $\mathbb{P}_k^1$.  Then $\sX$ is separably rationally connected.
\end{corollary}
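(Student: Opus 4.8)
The plan is to deduce this from two inputs: the generic smoothness supplied by \autoref{cor:generic_smoothness}, and the conditional criterion \cite[4.8]{rational_connectedness_GFR}, which asserts that a del Pezzo fibration over a separably rationally connected curve is itself separably rationally connected once it is known to be generically smooth. Since the substantive work has already been carried out in \autoref{cor:generic_smoothness} (and ultimately in \autoref{cor:normality}), the task here is only to check that the hypotheses of these two results are met and to assemble them.

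First I would verify the hypotheses of \autoref{cor:generic_smoothness} with $C=\PP^1_k$. By assumption $f:\sX\to\PP^1_k$ is a projective fibration with $f_*\sO_\sX=\sO_{\PP^1_k}$ from a terminal threefold over a perfect field, with $p\geq 11$ and $-K_\sX$ ample over $\PP^1_k$; these are exactly the hypotheses of \autoref{cor:generic_smoothness} specialized to the base $\PP^1_k$. Hence a general fiber $\sX_t$ is smooth. By openness of the smooth locus of a morphism, $f$ is then smooth over a dense open subset of $\PP^1_k$; equivalently, the generic fiber $\sX_\eta$ is geometrically regular over $K=K(\PP^1_k)$, so that $f$ is generically smooth in the sense required by the criterion.

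Next I would identify the fibers geometrically. Restricting $-K_\sX$ to a general fiber and applying adjunction, $-K_{\sX_t}\lin (-K_\sX)|_{\sX_t}$ is ample, so the smooth surface $\sX_t$ is a del Pezzo surface; smooth del Pezzo surfaces are separably rationally connected, and the base $\PP^1_k$ is a rational curve, hence separably rationally connected. With the generic smoothness established above, the hypotheses of \cite[4.8]{rational_connectedness_GFR} are satisfied, and its conclusion is precisely that $\sX$ is separably rationally connected.

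The only genuinely delicate point is the role of generic smoothness. In positive characteristic the naive Graber--Harris--Starr argument---glue a rational curve in a fiber to one in the base and deform---can fail, because the curves produced need not be free, so separable rational connectedness of the fibers and of the base alone does not force it for the total space. It is exactly generic smoothness that supplies the separable structure needed to run the deformation argument inside \cite[4.8]{rational_connectedness_GFR}. Thus the main obstacle does not lie in the present statement but in the generic smoothness it invokes; once \autoref{cor:generic_smoothness} is in hand, this corollary is a direct application.
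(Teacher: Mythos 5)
Your proposal matches the paper's proof, which likewise deduces the statement from \autoref{cor:generic_smoothness} together with \cite[4.8]{rational_connectedness_GFR}. The one detail the paper adds is that \cite[4.8]{rational_connectedness_GFR} is applied to a resolution of the base change $\sX_{\overline{k}}$ rather than to $\sX$ itself (since $\sX$ is only terminal, hence not necessarily smooth, and the cited criterion is for smooth varieties over an algebraically closed field); as terminal threefold singularities are isolated this does not disturb the general fiber, and separable rational connectedness passes back to $\sX$ along the birational morphism, so your argument needs only this small supplement.
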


A special case of \autoref{cor:general_dimension} which is worth mentioning in its own right, is the analogue of quasi-elliptic surfaces with higher dimensional base.  This also appeared recently in \cite{zhang_abundance}.

\begin{corollary}\label{cor:quasi-elliptic}
Let $f:\sX\to B$ be a proper fibration with $f_*\sO_{\sX}=\sO_T$, generically of relative dimension $1$ between normal varieties, over a field $k$ of characteristic $p\geq 5$, such that $K_{\sX}$ is numerically trivial over a non-empty open subset of $B$.  Then a general fiber is a smooth curve of genus $1$.
\end{corollary}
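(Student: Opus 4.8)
The plan is to reduce everything to the generic fiber and apply \autoref{thm:generic}. Set $K := K(B)$ and let $X := \sX_\eta$ be the generic fiber, which is a normal proper curve over $K$ (normal because it is a localization of the normal scheme $\sX$, one-dimensional because $f$ is generically of relative dimension $1$). First I would verify the hypothesis of \autoref{thm:generic}: since $\sX$ is normal and $f_*\sO_{\sX}=\sO_B$, we have $H^0(X,\sO_X)=K$, and because $X$ is proper and normal every element of $K(X)$ integral over $K$ is a global regular function, hence lies in $H^0(X,\sO_X)=K$; thus $K$ is algebraically closed in $K(X)$. Restricting the relative numerical triviality to the generic point gives $K_X=K_{\sX_\eta}\equiv 0$, so $\deg_K K_X=0$ and $X$ has arithmetic genus $1$.

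Next I would apply \autoref{thm:generic} with $k=K$ and $k'=\overline{K}$. Write $Y$ for the normalization of the reduced subscheme of $X\times_K\overline{K}$, a smooth curve over the algebraically closed field $\overline{K}$ of some genus $g$, and let $\phi:Y\to X$ be the natural map. The theorem produces an effective Weil divisor $C\geq 0$ with $K_Y+(p-1)C\lin \phi^*K_X$. Since $K_X\equiv 0$, its pullback $\phi^*K_X$ is numerically trivial on the proper curve $Y$ and therefore has degree $0$. Taking degrees over $\overline{K}$ yields the single numerical identity
\begin{equation*}
2g-2+(p-1)\deg C = 0 .
\end{equation*}

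Now I would read off the conclusion from this identity together with $p\geq 5$, i.e.\ $p-1\geq 4$. As $\deg C$ is a nonnegative integer and $g\geq 0$, we get $g\leq 1$; the case $g=0$ would force $(p-1)\deg C=2$, which is impossible since $p-1\geq 4$ and $\deg C$ is a nonnegative integer; hence $g=1$ and $\deg C=0$, so $C=0$. The vanishing of $C$ means there is no genus drop between $X\times_K\overline{K}$ and its normalization $Y$, so the geometric generic fiber is already reduced and normal, hence smooth over $\overline{K}$. Thus $\sX_\eta$ is geometrically smooth, and since geometric smoothness is an open condition on the base, a general fiber is a smooth curve; its canonical degree is constant and equal to $0$ throughout the family, so it has genus $1$. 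This is consistent with \autoref{cor:general_dimension}(a), whose threshold $p_0=\dim\sX-\dim B+2=3$ is exactly what excludes the small primes.

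The step I expect to be the main obstacle is the implication ``$C=0\Rightarrow$ geometric smoothness'': one must be certain that the divisor $C$ furnished by \autoref{thm:generic} simultaneously records the conductor (non-normality) \emph{and} the geometric non-reducedness, so that its vanishing rules out all defect. This is precisely the content encoded in the genus-drop reading of the identity above, in which the defect enters with the positive coefficient $(p-1)/2$; the remaining verifications — the hypotheses of \autoref{thm:generic} and the spreading-out of geometric smoothness to general closed fibers — are routine.
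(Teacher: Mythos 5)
Your reduction to the geometric generic fiber, the verification via $f_*\sO_{\sX}=\sO_B$ that $K(B)$ is algebraically closed in $K(X)$, and the degree identity $2g-2+(p-1)\deg C=0$ are all sound; note that $X$ is a regular curve, so $K_X$ is Cartier and $\deg\phi^*K_X=0$ is unproblematic. For relative dimension one this degree count is a legitimately more elementary substitute for the bend-and-break step that the paper uses in \autoref{cor:general_dimension}; the paper's own proof of the corollary is simply to quote \autoref{cor:general_dimension}(a) (with $p_0=\dim\sX-\dim B+2=3$) together with the observations that a curve over an algebraically closed field is smooth if and only if it is normal, and that $\deg K=0$ forces genus~$1$.

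The genuine gap is exactly the step you flagged and then waved through: ``$C=0\Rightarrow$ geometric smoothness.'' \autoref{thm:generic} asserts only the \emph{existence} of some effective $C$ with $K_Y+(p-1)C\sim\phi^*K_X$; nothing in its statement says that $C\neq 0$ whenever $X\times_K\overline{K}$ fails to be reduced or normal, and your appeal to ``the genus-drop reading of the identity'' is circular --- the identity is a pure degree computation and carries no information about which geometric defects force $C$ to be nonzero. This nonvanishing is a real extra input: in the proof of \autoref{cor:general_dimension} the paper obtains it from \cite[4.2]{tanaka_behaviour}, using precisely that $K(B)$ is algebraically closed in $K(X)$. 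Alternatively, in the geometrically \emph{reduced} case you could invoke the final clause of \autoref{thm:generic}, which identifies $(p-1)C$ with the divisorial part of the conductor; on a curve the conductor is divisorial, so $C=0$ does force normality there. But that still leaves the geometrically non-reduced case --- which your statement must also exclude --- unaddressed without Tanaka's result. Once you insert that citation, your computation closes the argument, and the passage from the geometric generic fiber to general closed fibers proceeds as in \autoref{prop:fibers}.
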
 

As an application of our geometric results, we study Kodaira vanishing for terminal Mori fiber spaces.  Examples of del Pezzo surfaces over imperfect fields for which Kodaira vanishing fails are known to exist in characteristic $2$, for example \cite{maddock_del_pezzo} and \cite{schroer_weak_del_pezzo}.
We prove that such surfaces do not exist in characteristics at least $5$, as a special case of the following:  

\begin{theorem}\label{thm:generic_kodaira}
Let $X$ be a normal projective variety over a field $k$ of characteristic $p$, such that either
\begin{enumerate}
 \item \label{itm:generic_kodaira:ample} $-K_X$ is an ample $\bQ$-Cartier divisor and  $p>2 \dim X$, or
 \item \label{itm:generic_kodaira:nef} $-K_X$ is a nef $\bQ$-Cartier divisor and $p>2 \dim X + 1$. 
\end{enumerate}
Let $L$ be an ample Cartier divisor on $X$.  Then $H^1(X,\sO_X(-L))=0$.  
\end{theorem}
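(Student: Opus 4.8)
The plan is to reduce the vanishing to a positivity statement on the normalized inseparable base change, where \autoref{thm:generic} supplies a defect divisor whose coefficients are divisible by $p-1$. Since $\dim_k H^i(X,\sF)$ is unchanged under the flat base change along $k \to k'$, I would first choose a finite extension $k'/k$ over which $Y$, the normalization of $(X\times_k k')_{\mathrm{red}}$, is already defined, and write $\phi\colon Y\to X$ for the induced finite surjection. Finiteness of $\phi$ makes $\phi^*L$ ample and makes $\phi^*(-K_X)$ ample when $-K_X$ is ample and nef when $-K_X$ is only nef. Feeding $X$ (over the non-closed field $k$) into \autoref{thm:generic} then produces an effective Weil divisor $C$ with $-K_Y = \phi^*(-K_X) + (p-1)C$; thus $-K_Y$ differs from the genuinely positive class $\phi^*(-K_X)$ by an effective divisor all of whose coefficients are divisible by $p-1$, and in the ample case $-K_Y$ is big.

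Next I would descend the desired vanishing from $Y$ to $X$. The natural maps $\sO_{X_{k'}} \to \sO_{(X_{k'})_{\mathrm{red}}} \to \phi_*\sO_Y$ fit into two short exact sequences whose kernel (the nilradical) and cokernel (the conductor quotient) are supported on a closed subset of dimension at most $\dim X - 1$. Twisting by $-L$ and chasing the resulting long exact sequences, I would use that $X$ is $S_2$ to control the cohomology of these lower-dimensional sheaves; concretely, $S_2$ furnishes Serre duality in the range linking $H^1$ and $H^{n-1}$, so the conductor contributions are governed by data on a proper closed subset and can be isolated. The upshot would be that it suffices to prove $H^1(Y,-\phi^*L)=0$ on the normal projective variety $Y$, now carrying the relation above.

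For the vanishing on $Y$ I would dualize, using normality of $Y$ to identify $H^1(Y,-\phi^*L)$ with $H^{n-1}(Y,\omega_Y\otimes\phi^*L)$, where $\omega_Y=\phi^*\omega_X\otimes\sO_Y(-(p-1)C)$, and then induct on $n=\dim X$ by restricting to a general member $S$ of a sufficiently divisible very ample system and invoking adjunction $K_S\sim (K_Y+S)|_S$. Each such restriction spends one unit of positivity of $-K$, so the ampleness (resp.\ nefness) of $\phi^*(-K_X)$ must be parceled out carefully to keep the restricted anticanonical class in the positive range throughout the induction. The base of the induction is the surface case, where \autoref{cor:normality} already forces geometric normality once $p>3$ and classical surface vanishing applies, while the quantitative input that the coefficients of $(p-1)C$ are at least $p-1$ is exactly what prevents the effective defect from destroying positivity at each stage. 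Tracking the $\dim X$ steps of adjunction, together with the single extra unit of slack needed when $\phi^*(-K_X)$ is merely nef, is what I expect to yield the thresholds $p>2\dim X$ and $p>2\dim X+1$.

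The crux, and the step I expect to be hardest, is the vanishing on $Y$ in positive characteristic: Kodaira and Kawamata--Viehweg vanishing genuinely fail for normal varieties in characteristic $p$, so the induction cannot simply quote a vanishing theorem. The whole argument hinges on the fact that the defect divisor is not arbitrary but has coefficients divisible by $p-1$ (the content of \autoref{thm:generic}); this quantized positivity is precisely what must be played against the failure of vanishing to extract a sharp numerical threshold rather than an existence-only statement. Making precise how this divisibility interacts with the adjunction-restriction of $-K_Y$, and simultaneously verifying that no cohomology leaks in from the non-normal or non-reduced locus of $X_{k'}$ as one runs the induction, is the technical heart of the proof.
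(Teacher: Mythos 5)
Your proposal has a genuine gap at precisely the step you identify as the crux: you never actually prove $H^1(Y,-\phi^*L)=0$, and nothing in your outline can produce it. Note first that when $X$ is already geometrically normal the divisor $C$ of \autoref{thm:generic} is zero, so your ``quantized positivity'' contributes nothing, yet the statement is still nontrivial (e.g.\ for a smooth Fano variety over $\overline{\bF}_p$); your reduction has then simply returned you to the original problem. The paper's engine is entirely different and is the idea your proposal is missing: one argues by contradiction, and the assumed \emph{non-vanishing} of $H^1(X,-L)$ is itself the input. Using $S_2$, duality and Serre vanishing one finds $m$ with a nonzero Frobenius-killed class in $H^1(X,-mL)$; by \autoref{thm:ekedahl_cover} such a class builds an $\alpha_{L^{-m}}$-torsor $f:Z\to X$, a purely inseparable degree-$p$ cover with $\omega_Z\cong f^*(\omega_X\otimes L^{-m(p-1)})$. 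It is this cover --- not the base change of $X$ --- that carries the $(p-1)$-divisible negativity against $L$. Normalizing $Z$, base-changing to $\bar k$, and applying \autoref{lem:two_conductors} and \autoref{thm:generic} gives $K_{Y}+\Delta\sim h^*(K_X-m(p-1)L)$ with $\Delta$ effective; bend-and-break applied to a general complete intersection curve then produces a rational curve $\Gamma$ with $m(p-1)\,(h^*L\cdot\Gamma)\leq -(K_Y+\Delta)\cdot\Gamma\leq 2\dim X$, and since $h^*L\cdot\Gamma$ is a positive integer this contradicts $p>2\dim X$ (resp.\ $p>2\dim X+1$ in the nef case).

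Two further steps of your outline would also fail as written. The descent from $Y$ to $X$ via the nilradical and conductor sequences is not sound: if $X$ is not geometrically reduced (which the theorem permits) the nilradical of $X_{k'}$ is generically nonzero, hence not supported in dimension $\leq\dim X-1$; and even in the reduced case the connecting map from $H^0$ of the conductor quotient --- a torsion sheaf, which can have sections even after twisting by $-L$ --- obstructs the implication $H^1(Y,-\phi^*L)=0\Rightarrow H^1(X,-L)=0$. Finally, the hyperplane-section induction does not preserve the hypotheses: for $S$ a general very ample divisor, $-K_S=(-K_Y-S)|_S$ is not nef, so the inductive hypothesis does not apply to $S$, and Bertini-type statements for normality or regularity of general members are unavailable in characteristic $p$ in the generality you would need.
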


We then use this together with our previous results to prove that  smooth threefolds admitting a Mori fiber space structure of relative dimension $2$ in characteristic $p\geq 5$  satisfy Kodaira vanishing for $H^2$.  We note that here we use the usual definition, which is that $f : X \to T$ is a smooth Mori fiber space if $X$ is smooth, $-K_X$ is $f$-ample, and $\rho(X/T)=1$.

\begin{theorem}\label{cor:kodaira_gorenstein}
Let $f : X \to T$ be a smooth projective $3$-fold admitting a Mori fiber space to a smooth curve over an algebraically closed field of characteristic $p\geq 5$ and let $L$ be an ample Cartier divisor on $X$. Then $H^2(X,\sO_X(K_X + L))=0$.

Furthermore if $p\geq 11$, then the following isomorphism of $k$-vector spaces holds:
$$H^1(X,\sO_X(K_X+L))\isom \bigoplus_{\parbox{75pt}{\begin{center}\tiny $t \in T$ closed point, \\[1pt]
$\left(X_t \right)_{\red}$ is non-normal, \\[1pt]
$H^1\left( \left. X_t, - L \right|_{X_t} \right) \neq 0$\end{center}}} \left(R^1 f_* \sO_X (K_X +L)\right)_t,$$
where $\left(R^1 f_* \sO_X (K_X +L)\right)_t$ denotes the stalk (not the fiber) of the corresponding sheaf, which stalk is Artinian and non-zero.
\end{theorem}

\begin{remark}
The moral of the part of \autoref{cor:kodaira_gorenstein} pertaining to $H^1(X, K_X +L)$ is that Kodaira vanishing holds (for the fixed line bundle $L$) in this cohomological degree if and only if it holds for all fibers which have non-normal reduced structures. In characteristic zero the latter vanishing is somehow forced due to global reasons. It is not clear whether or not any bad behavior can happen in  high enough characteristics. 	\end{remark}

\subsection{Organization of the paper}

In \autoref{sec:prelim} we review some necessary background. In \autoref{sec:can_bdl} we prove our main technical statements (\autoref{thm:foliation} and \autoref{thm:p-1C_conductor}), which is the canonical bundle formula for inseparable base-changes reformulated for geometric fibrations. Then, in \autoref{sec:geom_conseq}, we establish the geometric consequences and in \autoref{sec:irregularity} the Kodaira vanishing statement.

\subsection*{Acknowledgments}

We would like to thank Gavin Brown for asking the question about normality and reducedness of fibers of Mori fiber spaces which partially motivated the project, Andrea Fanelli, Omprokash Das and Johan de Jong for the useful conversations, and Hiromu Tanaka for useful comments and for pointing out \cite{gerstenhaber}. 

\section{Preliminaries}
\label{sec:prelim}

A \emph{variety} over a field $k$ is a separated, integral scheme of finite type over $k$. \emph{Fibration} in this article means a surjective map of varieties over some field $k$.  Our fields may be imperfect unless otherwise stated.

\subsection{Generic and general fibers}

Let $f:\sX\to B$ be a morphism of varieties over a field $k$.  The \emph{generic fiber} $X$ of $f$ is the scheme $\sX\times_B \Spec(k(\eta))$ where $\eta$ is the generic point of $B$.  This is a scheme over $K(B)$, the function field of $B$.  The \emph{geometric generic fiber} of $f$ is the scheme $X\times_{K(B)}\overline{K(B)}$.  By saying that a general fiber of $f$ satisfies a certain property, we mean that there is a non-empty open subset $U\subset B$ such that the scheme theoretic fiber over every closed point of $U$ satisfies that property.

In general, if $k$ is perfect, the properties of the geometric generic fiber reflect those of a general fiber (as stated in \autoref{prop:fibers}) while the properties of the generic fiber reflect those of $\sX$.

\begin{proposition}\label{prop:fibers}
Let $f:\sX\to B$ be a morphism of varieties over a perfect field $k$.  Then the geometric generic fiber is normal (resp. regular, reduced) if and only if a general fiber is normal (resp. regular, reduced). 
\end{proposition}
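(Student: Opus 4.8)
The plan is to prove Proposition~\ref{prop:fibers} by combining standard ``spreading out'' / constructibility results from EGA with the perfectness of the base field $k$. I want to relate three objects: the geometric generic fiber $X_{\overline{\eta}} = X \times_{K(B)} \overline{K(B)}$, the generic fiber $X = \sX_\eta$, and the closed fibers $\sX_t$ for $t$ in a dense open $U \subseteq B$.

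\begin{proof}[Proof sketch]
The key input is EGA~IV, which guarantees that the loci of good behavior for the fibers of a finite type morphism are constructible. More precisely, for a morphism $f : \sX \to B$ of finite type, the set of points $b \in B$ over which the fiber $\sX_b$ is geometrically normal (resp. geometrically regular, geometrically reduced) is constructible in $B$. Since $B$ is irreducible, a constructible set containing the generic point $\eta$ contains a dense open subset $U$. Thus, if the \emph{geometric} generic fiber has one of these properties, then the geometric fiber over every closed point of some dense open $U$ has the same property, which is exactly the statement that a general fiber is geometrically normal (resp. regular, reduced). The first step is therefore to invoke this constructibility to pass from the geometric generic fiber to a general \emph{geometric} closed fiber.

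The second step is to remove the word ``geometric'' using the hypothesis that $k$ is perfect. For a scheme of finite type over a perfect field, a property of the fiber and of its geometric counterpart coincide: normality (resp. regularity, reducedness) over a perfect field is equivalent to geometric normality (resp. regularity, reducedness), because base change to the algebraic closure of a perfect field is a limit of finite \emph{separable}, hence \'etale after the perfect closure is trivial, extensions, and such base changes preserve and reflect these properties. Concretely, for a closed point $t \in U$, the residue field $\kappa(t)$ is a finite extension of $k$, and since $k$ is perfect it is itself perfect; hence $\sX_t$ is normal (resp. regular, reduced) if and only if $(\sX_t)_{\overline{\kappa(t)}}$ is. This upgrades ``a general geometric fiber is good'' to ``a general fiber is good,'' giving the forward implication.

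For the reverse implication I would run the same two reductions backwards. If a general fiber is normal (resp. regular, reduced), then by perfectness of $\kappa(t)$ the general geometric fiber has the same property, so the constructible locus of geometrically-good fibers contains a dense open subset of $B$, hence contains the generic point $\eta$; therefore the geometric generic fiber is geometrically normal (resp. regular, reduced) as desired. The main obstacle, and the step requiring the most care, is the equivalence between a property and its geometric analogue over the imperfect-to-perfect transition: one must make sure the argument is applied at the closed points $t$, where $\kappa(t)$ is a finite extension of the perfect field $k$ and hence perfect, rather than at the generic point, where $K(B)$ need not be perfect. This is precisely why the hypothesis that $k$ is perfect is essential: it is used not for $K(B)$ but to guarantee that every residue field $\kappa(t)$ is perfect, so that non-geometric and geometric properties agree fiberwise.
\end{proof}
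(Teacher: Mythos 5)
Your proposal is correct and follows essentially the same route as the paper: invoke the EGA~IV openness/constructibility of the locus of geometrically normal (resp.\ regular, reduced) fibers to pass between the generic fiber and fibers over a dense open, then use perfectness of $k$ (hence of the residue fields at closed points) to identify each property with its geometric version. The only cosmetic difference is in how the ``geometric = non-geometric over a perfect field'' step is justified --- you argue via limits of finite separable (étale) extensions, while the paper cites property-specific references (Hartshorne for reducedness, $S_2+R_1$ and smoothness-equals-regularity for normality and regularity) --- but both are standard and valid.
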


\begin{proof}
Although the statement is well known to experts, we list some references and facts from which it can be put together. The claim that geometric normality (resp. geometric regularity and geometric reducedness) are open properties (in $B$) is shown in  \cite[Thm 6.9.1]{EGA_IV_II}, \cite[Thm 12.1.1]{EGA_IV_III}. Then it follows that these properties hold for the geometric generic fiber if and only if they hold for fibers over a non-empty open set. To conclude,  one has to show that for perfect ground fields the geometric and the non-geometric versions of these three notions coincide. For reduced this is \cite[Exc 3.15.b]{hartshorne_ag}. 
For normal it is easiest is to think about it as $S_2 + R_1$ noting that $S_n$ is invariant under base extension (e.g., \cite[Prop 5.3.1]{EGA_IV_II}) and  for finite type schemes over perfect fields smoothness is equivalent to regularity (e.g. \cite[Thm 22.5.8]{EGA_IV_I}). 
\end{proof}

\begin{lemma}\label{lem:fibre_normalisation}
Let $f:X\to T$ be a morphism of varieties and $\pi:Y\to X$ the normalization of $X$.  Suppose that for a general point $t\in T$, $Y_t$ is normal.  Then $Y_t$ is the normalization of $X_t$. 
\end{lemma}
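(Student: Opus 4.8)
The plan is to verify the three defining properties of the normalization for the map induced on a general fiber, namely that it is a finite birational morphism onto the reduced fiber whose source is normal. Write $g=f\circ\pi:Y\to T$, so that $Y_t$ is the fiber of $g$ and $\pi$ restricts to $\pi_t:Y_t\to X_t$. We may assume $f$ is dominant, since otherwise the general fiber is empty and there is nothing to prove. Recall that, as $X$ is of finite type over a field it is excellent, so $\pi$ is finite; moreover $\pi$ is an isomorphism over the open set $U=X\setminus Z$, where $Z$ is the closed non-normal locus of $X$, which satisfies $\dim Z<\dim X$. Throughout we are free to shrink $T$ to a dense open subset, since all conclusions concern a general $t$.

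First I would arrange, by shrinking $T$, that the non-normal locus does not swallow any generic point of the fiber. By the theorem on the dimension of fibers (or generic flatness), after shrinking $T$ the fiber $X_t$ is equidimensional of dimension $\dim X-\dim T$ for every $t$. For $Z$ there are two cases: if $f|_Z$ is not dominant, then after removing $\overline{f(Z)}\subsetneq T$ we have $Z_t=\emptyset$; if $f|_Z$ is dominant, then after shrinking $T$ we get $\dim Z_t=\dim Z-\dim T<\dim X-\dim T$. In either case $\dim Z_t<\dim X_t$ for general $t$, so $Z_t$ contains no generic point of $X_t$; equivalently, every generic point of $X_t$ lies in the open set $U_t=X_t\setminus Z_t$.

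Now I would assemble the argument on a general fiber $t$, for which additionally $Y_t$ is normal by hypothesis. Since $\pi$ is finite, so is the base change $\pi_t:Y_t\to X_t$. Because $\pi$ is an isomorphism over $U$, the map $\pi_t$ is an isomorphism over $U_t$; in particular $U_t$ is reduced, and since it contains every generic point of $X_t$ it is dense. As $Y_t$ is normal, hence reduced, the morphism $\pi_t$ factors uniquely through the closed immersion $(X_t)_{\mathrm{red}}\hookrightarrow X_t$, yielding a finite morphism $\bar\pi_t:Y_t\to (X_t)_{\mathrm{red}}$ which is an isomorphism over the dense open $U_t\subseteq (X_t)_{\mathrm{red}}$ and is therefore birational. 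A finite birational morphism from a normal scheme onto a reduced excellent scheme is the normalization, so $\bar\pi_t$ identifies $Y_t$ with the normalization of $(X_t)_{\mathrm{red}}$, i.e.\ of $X_t$, as desired.

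I expect the only genuine obstacle to be the dimension bookkeeping in the second paragraph: one must ensure that for general $t$ the non-normal locus meets $X_t$ in strictly smaller dimension, which is exactly what keeps it off the generic points of the fiber and, incidentally, forces $X_t$ to be generically reduced there. Everything else—finiteness, the factorization through the reduced fiber, and the characterization of the normalization as the unique finite birational map with normal source—is formal.
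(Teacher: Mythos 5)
Your proof is correct, but it takes a genuinely different route from the paper's. The paper disposes of the lemma in one sentence via the universal property of normalization: a morphism $Z\to X_t$ from a normal variety composes to $Z\to X$, which factors through $Y$ and hence through $Y_t$. Your argument instead verifies directly that $Y_t\to (X_t)_{\mathrm{red}}$ is a finite birational morphism with normal source onto a reduced Nagata scheme, which characterizes the normalization. What each buys: the paper's version is shorter but glosses over the fact that $Z\to X$ is \emph{not} dominant (it lands in a fiber), so the cited factorization through $Y$ really needs the morphism to hit the locus where $\pi$ is an isomorphism generically on each component of $X_t$ --- which is precisely the dimension bookkeeping you carry out explicitly; your version is longer but self-contained and makes transparent exactly where the ``general $t$'' hypothesis enters. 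One loose end to tighten in your write-up: to conclude that $\bar\pi_t$ is birational you need not only that $U_t$ is dense in $(X_t)_{\mathrm{red}}$ but also that $\pi_t^{-1}(U_t)$ is dense in $Y_t$ (a priori $Y_t$ could have a component lying over $Z_t$). This follows by running your second paragraph once more on $\pi^{-1}(Z)\subseteq Y$: since $\pi$ is finite, $\dim\pi_t^{-1}(Z_t)=\dim Z_t<\dim X-\dim T$, while for general $t$ every component of $Y_t$ has dimension $\dim Y-\dim T=\dim X-\dim T$, so no component of $Y_t$ is swallowed. With that sentence added the argument is complete.
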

\begin{proof}
This follows from the universal property of normalization:  If $Z\to X_t$ is a morphism from a normal variety, we get a morphism $Z\to X$, which therefore factors through $Z\to Y$ and hence through $Z\to Y_t$.
\end{proof}

\subsection{Reflexive sheaves}

Let $\sF$ be a reflexive coherent sheaf on a normal variety $X$, meaning that the natural map $\sF \to \sF^{**}$ is an isomorphism.  Denote by $\sF^{[n]}$ the $n^{\mathrm{th}}$ reflexive power of $\sF$, that is, $(\sF^{\otimes n})^{**}$.  We define the rank of $\sF$ to be the dimension of $\sF_{\eta}$ where $\eta$ the generic point of $X$.  We define the determinant of a rank $r$ reflexive sheaf $\sF$ to be $(\wedge^{r}\sF)^{**}$.  We take the double dual to ensure that the determinant is a rank $1$ reflexive sheaf, that is, it determines a class of Weil divisors.  For details on the properties of reflexive sheaves see \cite{hartshorne_reflexive}.

Let $X$ be a normal variety over a field $k$.  Then we set $\Omega_X:= \Omega_{X/k}$ to be the sheaf of K\"{a}hler differentials on $X$ over $k$, and define the tangent sheaf $\sT_X$ to be $\Omega_X^{*}$.  Then  $\sT_{X}$ can be also viewed the sheaf of derivations on $X$. More precisely, on an affine open set $U \subseteq X$ with $A:= \Gamma(U, \sO_X)$, $\sT_X$ corresponds to the $A$-module $\Hom_A(\Omega_{A/k}, A)$. Then
according to the universal property of K\"ahler differentials,  the latter is isomorphic to $\Der_k(A)$, so $\sT_X$ over $U$ corresponds to $\Der_k(A)$. 
Note, that if  $k$ is perfect, the regular locus $X_{\reg}$ is smooth over $k$ (e.g. \cite[Thm 22.5.8]{EGA_IV_I}), and hence $\sT_X|_{X_{\reg}}$ is locally free of rank $\dim X$.

An open set $\iota : U \hookrightarrow X$ is called big if $\codim_X (X \setminus U) \geq 2$.  A coherent sheaf is reflexive if and only if $\iota_* (\sF|_U) \cong \sF$ for a big open set $U$ via the natural morphism between the two \cite[Prop 1.6]{hartshorne_reflexive}. This in particular shows that saturated subsheaves of reflexive sheaves are reflexive (where saturated can be defined equivalently either by requiring the stalks to be saturated submodules or by requiring that sections are in the subsheaf if and only if they are in it at the generic point). Also, as $\sT_X$ is a reflexive sheaf by \cite[Cor 1.2]{hartshorne_reflexive}, if $k$ is perfect it is equal to the extension of the locally free sheaf of derivations  on  $X_{\reg}$ to $X$ via the above description of reflexive sheaves.

\subsection{Frobenius Morphisms}

In this subsection, let $X$ be a scheme over a field $k$ of characteristic $p>0$.

\begin{definition}
There exists an endomorphism denoted $\Fr_X:X\to X$ (or just $\Fr$ if there is no ambiguity) called the \emph{absolute Frobenius}, induced by the $p^{\mathrm{th}}$ power maps on rings.
\end{definition}



\begin{definition}
Let $X^{(-1)}:=X\otimes_kk$ where the right hand map is $\Fr_k$.  The induced $k$-morphism $F_{X/k}:X\to X^{(-1)}$ is called the \emph{relative Frobenius}.  We denoted it by $F$ if $X$ and $k$ are clear from the context.
\end{definition}

\begin{remark}
If $k$ is perfect then $X^{(-1)}$ is isomorphic to $X$ as a scheme, but not as a $k$-scheme.  In that case we can also denote the relative Frobenius by $F:X^{(1)}\to X$.   Unless otherwise specified, $F$ will be relative to the ground field.
\end{remark}

\subsection{Foliations}

In this subsection we describe a correspondence between certain finite purely inseparable morphisms and certain subsheaves of $\sT_{X}$.

\begin{definition}
We say that a purely inseparable morphism $f:X\to Y$ of characteristic $p$ schemes is of \emph{height 1} if there exists a morphism $g:Y\to X$ such that $g\circ f =\Fr$.
\end{definition}

\begin{definition}
Let $X$ be a normal variety over a perfect field of characteristic $p>0$.  By a \emph{foliation} on $X$ we mean a subsheaf $\sF\subset \sT_X$ which is saturated and closed under $p$-powers.   (Note that in characteristic $p$, if $d$ is a derivation then the composition of $p$ copies of $d$ as a function is also a derivation: this is what we mean by $p$-powers.)

\end{definition}

\begin{remark}
Usually in the above definition it is also required that $\sF$ is closed under Lie-brackets. However, this follows from the $p$-closedness assumption according to \cite{gerstenhaber}.
\end{remark}

\begin{proposition}
Let $X$ be a normal variety over a perfect field $k$ of characteristic $p>0$. Then, there is a $1$-to-$1$ correspondence as follows:
\begin{equation*}
\fbox{\parbox{220pt}{finite purely inseparable $k$-morphisms $f : X \to Y$ of height $1$ with $Y$ normal}}
\longleftrightarrow
\fbox{\parbox{80pt}{foliations $\sF \subseteq \sT_X$}}
\end{equation*}
This correspondence is given by:
\begin{itemize}
\item $\rightarrow$: $\sF$ is the sheaf of derivations that vanish on $\sO_Y \subseteq \sO_X$ ($X$ and $Y$ have the same topological space, so this containment does make sense), and 
\item $\leftarrow$: $Y= \Spec_X \sA$, where $\sA$ is the subsheaf of $\sO_X$ that is taken to zero by all the sections of $\sF$. 
\end{itemize} 
Furthermore, morphisms of degree $p^m$ correspond to foliations of rank $m$.

\end{proposition}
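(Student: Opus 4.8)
The plan is to reduce the whole statement to the Galois correspondence of Jacobson for purely inseparable field extensions of exponent (height) one, applied at the generic point of $X$, and then to globalize using reflexivity of saturated subsheaves together with the normality of $\sO_X$. I will use throughout that, since $k$ is perfect and $X$ is of finite type, the absolute Frobenius $\Fr_X$ is finite, so $\sO_X$ is a finite module over the subring $\sO_X^p$, and $\sO_X^p$ (which contains $k=k^p$) is again a finitely generated $k$-algebra.

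First I would check that the two constructions land in the stated categories. For the direction $\rightarrow$, given $f:X\to Y$ of height $1$ with $Y$ normal, the subsheaf $\sF\subseteq\sT_X$ of $k$-derivations killing $\sO_Y$ is saturated (if $h\cdot d$ kills $\sO_Y$ for some nonzero local section $h$ of the domain $\sO_X$, then so does $d$) and closed under $p$-powers (the annihilator of a fixed subring is automatically stable under the $p$-fold composition), hence a foliation. For $\leftarrow$, given a foliation $\sF$, the Leibniz rule gives $d(a^p)=0$, so $\sO_X^p\subseteq\sA:=\sO_X^{\sF}$; therefore $\Fr_X$ factors through $Y=\Spec_X\sA$, making $f:X\to Y$ purely inseparable of height $1$ and finite by $F$-finiteness, while $\sA$ is finite over $\sO_X^p$ (a submodule of a finite module over a Noetherian ring) and hence finite type over $k$.

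The key input is the field-theoretic statement that I would cite from Jacobson: writing $L=K(X)$, for every finite-dimensional restricted Lie subalgebra $\mathfrak g\subseteq\Der_k(L)$ one has $\Der_{L^{\mathfrak g}}(L)=\mathfrak g$ and $[L:L^{\mathfrak g}]=p^{\dim_L\mathfrak g}$, and conversely $L^{\Der_M(L)}=M$ for any intermediate field $L^p\subseteq M\subseteq L$. Since a foliation is saturated, it is reflexive and thus determined by its generic stalk $\mathfrak g$, which is a restricted Lie subalgebra (the Lie-bracket closure being automatic by \cite{gerstenhaber}). This lets me identify everything at the generic point: a local section $a$ of $\sO_X$ lies in $K(Y)=L^{\mathfrak g}$ exactly when each $d(a)$ vanishes at the generic point, hence vanishes, so that $\sA=\sO_X\cap K(Y)$; consequently $\sA$ is integrally closed in $K(Y)$ as the intersection of the normal ring $\sO_X$ with the subfield $K(Y)$, which proves $Y$ normal. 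The two round trips then follow: in $\sF\mapsto Y\mapsto\sF'$ the inclusion $\sF\subseteq\sF'$ is an equality of saturated subsheaves because both have generic stalk $\mathfrak g=\Der_{L^{\mathfrak g}}(L)$; in $Y\mapsto\sF\mapsto Y'$, normality of $Y$ gives $\sO_X\cap K(Y)=\sO_Y$ and $L^{\Der_{K(Y)}(L)}=K(Y)$ gives $\sA=\sO_Y$. Finally the degree statement is the dimension count $\deg f=[L:L^{\mathfrak g}]=p^{\dim_L\mathfrak g}=p^{\rk\sF}$.

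The main obstacle is not the field theory, which is Jacobson, but the globalization: establishing $\sA=\sO_X\cap K(Y)$ so that normality of $Y$ drops out cleanly, and dually that the two saturated subsheaves in the round trip coincide as sheaves rather than merely generically. Both are controlled by the principle that a saturated subsheaf of a reflexive sheaf is recovered from its generic stalk, combined with $\sO_X$ being integrally closed in $L$; the remaining care is to confirm, via $F$-finiteness, that the constructed $\sA$ is finite type and that $f$ is finite, so that $Y$ is genuinely a normal variety.
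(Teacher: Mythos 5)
Your proposal is correct and follows essentially the same route as the paper's proof: both reduce to Jacobson's correspondence between intermediate fields $K(X)\supseteq L\supseteq K(X)^p$ and restricted Lie subalgebras of $\Der_k(K(X))$ at the generic point, invoke \cite{gerstenhaber} for the automatic Lie-bracket closure, and globalize via the facts that a saturated subsheaf is determined by its generic stalk and that $\sO_X\cap K(Y)$ is integrally closed. The only cosmetic difference is that you fold the degree count into Jacobson's theorem where the paper reproves it by a tower of degree-$p$ extensions.
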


\begin{proof}
The correspondence is shown in the following diagram, where we comment below the diagram on the arrows. We denote by $\eta$ the generic point of $X$.
\begin{equation*}
\xymatrix{
\fbox{\parbox{160pt}{1.) height $1$ finite purely inseparable $k$-morphisms $f : X \to Y$ to normal varieties}} \ar@{<->}[d]
&
\fbox{\parbox{160pt}{7.) foliations $\sF \subseteq \sT_X$}}
\\
\fbox{\parbox{160pt}{2.) coherent subsheaves $\sO_X \supseteq \sA \supseteq \sO_X^p$ of normal rings}} \ar@{<->}[d]
&
\fbox{\parbox{160pt}{6.) $p$-closed $K(X)$-linear subspaces $F$ of $\sT_{\eta} \cong \Hom_{\sO_{X,\eta}}\left(\Omega_{X,\eta}, \sO_{X,\eta} \right) \cong \Der_k(K(X))$}} \ar@{<->}[u]
\\
\fbox{\parbox{160pt}{3.) intermediate fields  $K(X) \supseteq L \supseteq K(X)^p$}} \ar@{<->}[d]
&
\fbox{\parbox{160pt}{5.) $p$-closed $K(X)$-linear subspaces of $\Der(K(X))$}} \ar@{<->}[u]
\\
\fbox{\parbox{160pt}{4.) sub $p$-Lie algebras $F$, that is $K(X)$-linear $p$-power and Lie bracket closed subspaces  of $\Der(K(X))$}}\ar@{<->}[ur] 
}
\end{equation*}
\begin{itemize}
\item $1.) \leftrightarrow 2.)$: In the downwards direction, set $\sA:=\sO_Y$. Since $f$ is finite and purely inseparable, it is a homeomorphism, so $\sA$ lives on the same space as $\sO_X$ and $\sO_X^p$. In the other direction set $Y:= \Spec_X \sA$, with $k$-algebra structure induced by the composition $k\to \sO_X\hookrightarrow\sA$.
\item $2.) \leftrightarrow 3.)$: In one direction, we pass to the local rings at $\eta$. In the other direction, $\sA$ is the normalization of $\sO_X^p$ in $L$. This is an equivalence by the uniqueness of normalization.
\item $3.) \leftrightarrow 4.)$: This is the statement of \cite[first paragraph on page 189]{Jac75}.
\item $4.) \leftrightarrow 5.)$: This is proved in \cite{gerstenhaber}.  The statement is that being a sub Lie algebra is automatic for a $p$-closed subspace.
\item $5.) \leftrightarrow 6.)$: The only content here is that $\Der(K(X)) \cong \Der_k(K(X))$, that is an unconditional derivation $D$ of $K(X)$ is automatically zero on $k$. Indeed, it is automatically zero on $K(X)^p$, and since $k$ is perfect, $k \subseteq K(X)^p$. 
\item $6.) \leftrightarrow 7.)$: In one direction we set $F:= \sF_{\eta}$. In the other direction, we put a section $s$ of $\sT_X$ (over any open set) into $\sF$, if and only if $s_{\eta} \in F$. This yields the unique saturated subsheaf whose stalk at  $\eta$ is $F$. Since, $F$ is $p$-closed, so is $\sF$ by saturatedness.
\end{itemize}
Now we are left to contemplate that this correspondence is indeed given in both directions as claimed in the statement. By shrinking $X$ small enough, we may assume that $X$ is affine. Set $F:= \Gamma(X,\sF)$, $A:= \Gamma(X, \sO_X)$ and $B:= \Gamma(X, \sO_Y)$.  We see that our correspondence above agrees at the generic point $\eta$ with what we claimed in the statement. To show that it is true globally, we need to show two things: 
\begin{enumerate}
\item \label{goal:correspondence:two} $\{\partial \in \Der_k(A)| \forall f \in B: \partial(b) = 0\}$ \emph{is saturated.} As $\sO_X$ is torsion-free, we have: $\partial(b) = 0$ if and only if $\partial_\eta(b_{\eta})=0$, so $\partial$ being in the above subspace is decided at $\eta$, and hence, the above subspace of $\Der_k(A)$ is saturated.
\item \label{goal:correspondence:one} $C:=\{f \in A| \forall \partial \in F: \partial(f) =0\}$ \emph{is normal.} Let $a \in C_\eta$ be integral over $C$. As $A$ is integrally closed, $a \in A$ must hold. Also, as $a \in C_\eta$, we have  $\partial_\eta (a) =0$ for every $\partial \in F$. Then using that we are working in integral domains it follows that $\partial(a)=0$ also holds, and hence $a \in C$. 
\end{enumerate}

In order to see the last claim, that morphisms of degree $p^m$ correspond to foliations of rank $m$, it is enough to show that a field extension of degree $p^m$ corresponds to a $p$-closed subspace of dimension $m$.  Given a purely inseparable field extension $F_m/F_0$ of degree $p^m$, we can produce a tower $$F_m/F_{m-1}/.../F_0$$ where $F_i/F_0$ is of degree $p^i$.   Therefore if we prove the claim for $m=1$ we are done.  
But $F_1\isom F_0[x]/(x^p-\alpha)$ for some 
$\alpha\in F_0$, and so $\Der_{F_0}(F_1)=\langle\frac{\partial}{\partial x}\rangle$.

\end{proof}

\begin{proposition}
\label{prop:foliation_canonical}
Suppose $Y\to X$ is a finite purely inseparable $k$-morphism of height one between normal varieties over a perfect field $k$ of characteristic $p>0$, and $\sF$ the corresponding foliation.  Then
\begin{equation*}
 \omega_{Y/X}\cong (\det \sF)^{[p-1]}.
\end{equation*}
\end{proposition}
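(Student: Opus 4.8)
The plan is to reduce everything to the regular loci, where both sides become honest line bundles, and then to compute. Since $\omega_{Y/X}$ and $(\det\sF)^{[p-1]}$ are both reflexive of rank one, the description of reflexive sheaves recalled above shows it suffices to produce an isomorphism over a big open subset $U\subseteq Y$ and extend it. As $k$ is perfect, the regular loci $X_{\reg}$ and $Y_{\reg}$ are smooth over $k$, so I would take $U:=Y_{\reg}\cap f^{-1}(X_{\reg})$; this is big since $f$ is a homeomorphism and both regular loci have complement of codimension at least $2$. On $U$ the map $f$ is a finite morphism of smooth $k$-varieties of the same dimension, hence flat, and $\sF|_U$ is a subbundle of $\sT_U$ because it is saturated. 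The first observation is that $\sF$ is precisely the relative tangent sheaf: a $k$-derivation of $\sO_Y$ annihilates $\sO_X$ if and only if it is $\sO_X$-linear, so $\sF=\sT_{Y/X}=\Omega_{Y/X}^{\vee}$ on $U$, and therefore $\det\sF=(\det\Omega_{Y/X})^{\vee}$.

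Next I would extract $\omega_{Y/X}$ from the cotangent map. On $U$ the natural sequence
\[ 0\to K\to f^{*}\Omega_X\xrightarrow{\ \alpha\ }\Omega_Y\to\Omega_{Y/X}\to 0 \]
is exact, where $K:=\ker\alpha$; the map $\alpha$ fails to be injective exactly because $f$ is inseparable. All four terms are locally free on $U$, so comparing determinants of this four term sequence gives $f^{*}\det\Omega_X\otimes\det\Omega_{Y/X}\cong\det K\otimes\det\Omega_Y$, that is
\[ \omega_{Y/X}\cong\det\Omega_{Y/X}\otimes(\det K)^{-1}. \]
Combined with $\det\sF=(\det\Omega_{Y/X})^{\vee}$, the entire statement reduces to the single identification $\det K\cong(\det\Omega_{Y/X})^{\otimes p}$, since then $\omega_{Y/X}\cong(\det\Omega_{Y/X})^{\otimes(1-p)}\cong(\det\sF)^{\otimes(p-1)}$, which extends across $U$ to the reflexive power $(\det\sF)^{[p-1]}$.

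For the remaining identification I would invoke the local structure of $p$-closed foliations on a smooth variety: around each point of $U$ one can choose coordinates $y_1,\dots,y_n$ adapted to $\sF$, so that $\sF=\langle\partial_{y_1},\dots,\partial_{y_m}\rangle$ and $\sO_X$ is generated by $y_1^{p},\dots,y_m^{p},y_{m+1},\dots,y_n$. In such a chart $\Omega_{Y/X}$ is freely generated by the images of $dy_1,\dots,dy_m$, while $K$ is freely generated by the pullbacks $f^{*}d(y_1^{p}),\dots,f^{*}d(y_m^{p})$, and I would compute the two transition matrices under a change of adapted coordinates. The point is that, because $y_i^{p}\in\sO_X$, the Jacobian governing $\det K$ is the Frobenius (entrywise $p$th power) of the Jacobian governing $\det\Omega_{Y/X}$; here the elementary congruence $\alpha^{p}\equiv\alpha\pmod p$ applied to the exponents is exactly what makes the cross terms disappear and produces the clean $p$th power. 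I expect the main obstacle to be making this identification canonical, i.e. checking that the locally defined isomorphism $\det K\cong(\det\Omega_{Y/X})^{\otimes p}$ is independent of the adapted chart so that it glues and then extends by reflexivity; this is where the whole mechanism producing the final factor $p-1$ is concentrated, and it also relies on the local normal form for $p$-closed foliations as an input.
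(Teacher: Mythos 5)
Your route is genuinely different from the paper's: after the same reduction to a big open set where $X$ and $Y$ are smooth and both sides are line bundles, the paper simply quotes \cite[Prop 1.1, (1.3)]{ekedahl_canonical}, whereas you are in effect reproving that cited result. Your skeleton is the standard one and it is the right one: identify $\sF$ with $\sT_{Y/X}=\Omega_{Y/X}^{\vee}$, take determinants in the four-term cotangent sequence, and reduce everything to the single identification $\det K\cong(\det\Omega_{Y/X})^{\otimes p}$. The arithmetic heart is also correct: in adapted coordinates one has $\partial(z_i^p)/\partial(y_j^p)=(\partial z_i/\partial y_j)^p$ by Fermat's congruence applied to the exponents, and the determinant of an entrywise Frobenius equals the Frobenius of the determinant, which is exactly where the exponent $p-1$ is produced.

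That said, two of your steps are not yet proofs. First, ``$\sF|_U$ is a subbundle because it is saturated'' is not a valid implication: saturation only makes $\sT_Y/\sF$ torsion-free, and you must shrink $U$ further (it remains big) to make that quotient locally free; similarly, the local freeness of $\Omega_{Y/X}$ on a big open set is not automatic --- for $X,Y$ regular with $\sO_Y^p\subseteq\sO_X\subseteq\sO_Y$ it is the content of Kunz's conjecture, proved in \cite{kimura_niitsuma}, which this paper itself has to invoke elsewhere for exactly this purpose. Second, and more seriously, the adapted coordinates with $\sF=\langle\partial_{y_1},\dots,\partial_{y_m}\rangle$ and $\sO_X$ generated by $y_1^p,\dots,y_m^p,y_{m+1},\dots,y_n$ do not exist Zariski-locally in general; the normal form for a $p$-closed involutive subbundle is only available formally or \'etale-locally, and establishing it is essentially the same amount of work as the proposition itself --- it is where Ekedahl's argument lives. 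This is repairable: run the transition-matrix computation \'etale-locally and descend the resulting isomorphism of line bundles, which is harmless since an isomorphism class in $\Pic$ can be detected \'etale-locally by Hilbert 90. But as written, the normal form is used as a black box that is both unproved and asserted in a topology in which it fails, so the proposal is an outline of Ekedahl's proof rather than a self-contained replacement for the citation.
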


\begin{proof}
As both sides of the isomorphism in the statement are reflexive, we may restrict freely to any big open set. In particular, we may assume that both $X$ and $Y$ are smooth over $k$ and that $\sF \subseteq \sT_X$ is locally free.
Note that with these assumptions, both sides of the isomorphism become line bundles, and hence we may also assume that $k$ is algebraically closed.  
This is because two line bundles are isomorphic if and only if they are isomorphic after some field extension, and the canonical sheaves are compatible with base-extension. 
However, in this situation the statement was shown for example in \cite[Prop 1.1, (1.3)]{ekedahl_canonical}

\end{proof}

\subsection{Torsors and inseparable covers}

In this subsection we describe a way to produce purely inseparable covers which will be used in \autoref{sec:irregularity}.  Let $X$ be a scheme, and $\sL$ a line bundle on $X$.  We can consider $\sL$ as a group scheme over $X$ under addition.  In characteristic $p$ there is a morphism of group schemes $\sL\to \sL^p$ which is surjective in the fpqc topology, and which has some kernel $\alpha_{\sL}$.  Elements of $H^1(X,\alpha_{\sL})$ correspond to $\alpha_{\sL}$-torsors, which in our situation are purely inseparable covers of degree $p$. In summary:

\begin{theorem}[\cite{ekedahl_canonical}, \cite{maddock_del_pezzo}]\label{thm:ekedahl_cover}
Suppose $L$ is a line bundle on a normal variety $X$ over a field $k$ of characteristic $p$.  
Elements of the kernel of the Frobenius action
$$\Fr^*:H^1(X,\sL)\to H^1(X,\sL^p)$$ give rise to $\alpha_{\sL}$-torsors.
Furthermore, the torsor arising from any non-trivial element of the kernel is non-trivial, that is a purely inseparable cover of varieties of degree $p$.

 Finally, if $X$ is a projective, then any non-trivial $\alpha_{\sL}$-torsor $Z$ is also a projective, $G_1$ and $S_2$ variety which satisfies
$$\omega_Z\isom f^*(\omega_X\otimes \sL^{p-1}).$$
\end{theorem}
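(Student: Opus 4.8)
The plan is to extract everything from the long exact fppf-cohomology sequence of
\[
0 \longrightarrow \alpha_L \longrightarrow L \xrightarrow{\ F\ } L^{p} \longrightarrow 0,
\]
where $L$ and $L^{p}$ denote the additive group schemes attached to the line bundles $L$ and $L^{\otimes p}$, and $F$ is the relative Frobenius $s\mapsto s^{p}$ on the total spaces, a homomorphism of group schemes with infinitesimal kernel $\alpha_L$. First I would record that for the vector group scheme of a quasi-coherent sheaf the fppf cohomology agrees with the ordinary Zariski cohomology of the sheaf, so $H^i_{\mathrm{fppf}}(X,L)=H^i(X,L)$ and likewise for $L^{p}$; and that on $H^1$ the map induced by $F$ sends a \v Cech cocycle $\{s_{ij}\}$ to $\{s_{ij}^{p}\}$, i.e.\ it is exactly the Frobenius-semilinear map $\Fr^*$ of the statement. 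The relevant segment
\[
H^0(X,L^{p}) \longrightarrow H^1(X,\alpha_L) \longrightarrow H^1(X,L) \xrightarrow{\ \Fr^*\ } H^1(X,L^{p})
\]
then shows that $\ker \Fr^*$ is precisely the image of $H^1(X,\alpha_L)$, so every class in $\ker\Fr^*$ lifts to an $\alpha_L$-torsor; this is the first assertion. A lift of a nonzero class is itself nonzero, so the associated torsor $f\colon Z\to X$ is a nontrivial torsor.

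For the \emph{cover of varieties} part I would note that $Z\to X$ is finite and faithfully flat of degree $p$, being a torsor under the finite flat length-$p$ group scheme $\alpha_L$, and purely inseparable, since $\alpha_L$ is infinitesimal; in particular $f$ is a universal homeomorphism, so $Z$ is irreducible of dimension $\dim X$. It remains to see that $Z$ is reduced, equivalently generically reduced: over the generic point $\eta=\Spec K(X)$ the torsor is $\Spec K(X)[z]/(z^{p}-u)$, which is a field exactly when $u\notin K(X)^{p}$, i.e.\ when the class is generically nontrivial. This generic nontriviality is the one point that must be checked for the chosen lift, the lift being ambiguous up to the image of $H^0(X,L^{p})$; I expect this, together with the dualizing-sheaf computation below, to be where the real content lies.

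Assume now that $X$ is projective, $G_1$ and $S_2$. Then $Z$ is projective, being finite over $X$, and I would deduce the remaining properties from the fact that the fibers of $f$ are hypersurfaces $\kappa[z]/(z^{p}-u)$, hence Gorenstein, so that $f$ is a flat Gorenstein morphism. Flatness over an $S_2$ base with zero-dimensional (hence $S_n$ for all $n$) fibers gives that $Z$ is $S_2$, and a Gorenstein morphism over a $G_1$ base gives that $Z$ is $G_1$ (codimension-one points of $Z$ lie over codimension-one points of $X$, as $f$ is finite); moreover the relative dualizing sheaf $\omega_{Z/X}=\sHom_{\mathcal O_X}(f_*\mathcal O_Z,\mathcal O_X)$ is an invertible $\mathcal O_Z$-module, and $\omega_Z\isom \omega_{Z/X}\otimes f^*\omega_X$.

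The heart of the proof is the identification $\omega_{Z/X}\isom f^*L^{p-1}$. The $\alpha_L$-coaction equips $f_*\mathcal O_Z$ with a canonical filtration whose graded pieces are $L^{0},L^{-1},\dots,L^{-(p-1)}$; locally, after trivializing $L$, this is the filtration by $z$-degree of $\mathcal O_X[z]/(z^{p}-u)$, with $z$ transforming as a section of $L^{-1}$. In particular there is a canonical surjection $q\colon f_*\mathcal O_Z \twoheadrightarrow L^{-(p-1)}$ onto the top piece, which by the projection formula is a global section of $\omega_{Z/X}\otimes f^*L^{-(p-1)}$, since $\sHom_{\mathcal O_X}\bigl(f_*\mathcal O_Z, L^{-(p-1)}\bigr)=f_*\bigl(\omega_{Z/X}\otimes f^*L^{-(p-1)}\bigr)$. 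Working locally where $f_*\mathcal O_Z=A[z]/(z^{p}-u)$, the section $q$ is the top dual-basis functional $\phi_{p-1}$, and a short computation shows that $\{z^{i}\phi_{p-1}\}_{i=0}^{p-1}$ is the full dual basis, so $\phi_{p-1}$ generates the rank-one dualizing module over $A[z]/(z^{p}-u)$. Hence $q$ is a nowhere-vanishing section, $\omega_{Z/X}\otimes f^*L^{-(p-1)}\isom\mathcal O_Z$, and therefore $\omega_Z\isom f^*(\omega_X\otimes L^{p-1})$. I would also cross-check this against the $\mathcal O_X$-module identity $f_*\omega_{Z/X}=\bigoplus_{j=0}^{p-1}L^{j}=f_*f^*L^{p-1}$, which confirms that the twist is $L^{p-1}$ and not its inverse.
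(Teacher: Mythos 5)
Your construction of the torsor from the fppf exact sequence $0\to\alpha_L\to L\to L^{p}\to 0$, your deduction of the $S_2$ and $G_1$ properties of $Z$ from flatness of $f$ with zero-dimensional complete-intersection fibers over an $S_2$, $G_1$ base, and your identification $\omega_{Z/X}\isom f^*L^{p-1}$ via the top dual-basis functional $\phi_{p-1}$ are all correct, and this is considerably more self-contained than the paper's own proof: the paper simply cites Ekedahl and Maddock for the torsor construction, the non-triviality and the formula for $\omega_Z$, and only supplies the local model $\Spec(A[x]/(x^p-f))$ to pass from the smooth (resp.\ Gorenstein) hypotheses of those references to $G_1$ and $S_2$, obtaining the formula for $\omega_Z$ by taking reflexive closures from the Gorenstein locus rather than by your direct duality computation (both routes work).

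The genuine gap is the one you flag and then leave open: that the torsor attached to a non-trivial element of $\ker\Fr^*$ is \emph{integral}, i.e.\ that the local equations $z_i^p=t_i$ are generically irreducible. This is exactly the content of the phrase ``cover of varieties,'' and the paper relies on it (for instance to form the normalization $Z^\nu$ in the proof of \autoref{thm:generic_kodaira}), so it cannot be deferred; moreover, as you note, the lift of the class to $H^1(X,\alpha_L)$ is only defined up to the image of $H^0(X,L^p)$ and different lifts give different schemes $Z$, so the claim must hold for every lift. The standard way to close it, at least for $X$ normal: if $Z$ were generically non-reduced, each $t_i$ would be the $p$-th power of a rational section $v_i$ of $L$ over $U_i$; since $v_i$ satisfies the monic equation $T^p-t_i=0$, normality makes $v_i$ a regular section, and uniqueness of $p$-th roots in a domain turns $(v_i-v_j)^p=t_i-t_j=s_{ij}^p$ into $v_i-v_j=s_{ij}$, exhibiting the class in $H^1(X,L)$ as a coboundary, a contradiction. (Equivalently, the differentials $dt_i$ glue to a global section of $\Omega_X\otimes L^{p}$ whose non-vanishing is equivalent to generic reducedness of $Z$, and its vanishing forces the cocycle to split.) Since $Z$ is $S_1$ by your flatness argument, generic reducedness already gives reducedness, and irreducibility follows from $f$ being a homeomorphism; for $X$ merely integral rather than normal the regularity of the $v_i$ needs an extra word, but this is the only missing step.
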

\begin{proof}
The corresponding statement in \cite{ekedahl_canonical} assumes $X$ is smooth in place of $G_1$, and \cite{maddock_del_pezzo} extends this to Gorenstein.  However, the construction of $Z$ does not depend on the singularities of $X$, and there exists a Zariski cover of $X$ by $U\isom\Spec(A)$ such that $f^{-1}(U)\isom \Spec(A[x]/(x^p-f))$ for some $f\in A$ with $f^p\notin A$. It follows that $Z$ is Gorenstein over Gorenstein points of $X$ and $S_2$ over $S_2$ points of $X$.  In the case of $S_2$ this follows from \cite[Proposition 5.2]{kollar_mori} for then it is enough that $A[x]/(x^p-f)$ is $S_2$ as an $A$ module, when in fact it is free.  The formula for $\omega_Z$ is obtained by taking reflexive closures of the corresponding formula which holds on the Gorenstein locus.
\end{proof}

\subsection{Conductor ideal}\label{subsec:conductor}

In this subsection we collect together some facts about the conductor ideal.  

\begin{definition}
A domain $R$ has the property \emph{\Nagataone} if its integral closure is a finite $R$-module.
\end{definition}

Let $R$ be an \Nagataone ring and $S$ its normalization. Let $\frac{a_1}{b_1},\dots, \frac{a_r}{b_r}$ be generators of $S$ as an $R$-module ($a_i,b_i \in R$). Define $b:= \prod_{i=1}^r b_i$. Then  $\Spec S \to \Spec R$ is an isomorphism over $D(b)$, and $Sb = \sum_{i=1}^r R a_i \frac{b}{b_i}$. Indeed, a general element of the left side and of the right side of the above equation are (for  $c_i \in R$): $\left( \sum_{i=1}^r c_i \frac{a_i}{b_i} \right) b=\sum_{i=1}^r c_i \left(a_i \frac{b}{b_i}\right)$. In particular, $Sb$ is a non-empty ideal of $S$, which is also an ideal of $R$. If $R$ is also Noetherian, then there exists a maximal such ideal, as the set of such ideals is closed under addition of ideals.

\begin{definition}\label{def:conductor}
Let $R$ be a Noetherian \Nagataone ring and $S$ its normalization.  The largest ideal of $R$ which is also an ideal of $S$ is called the \emph{conductor}.  By the above remarks, this exists.
\end{definition}

The following (well-known) lemma says that the divisorial part of the conductor appears in the canonical bundle formula for normalization.

\begin{lemma}
\label{lem:two_conductors}
Let $f : X \to Y$ be the normalization of an affine variety over a field $k$ such that $\omega_X \cong \sO_X$ and $\omega_Y \cong \sO_Y$, and set $S := \Gamma(X, \sO_X)$ and $R:= \Gamma(Y,\sO_Y)$. Then the  image of  the natural map $\omega_X \to f^* \omega_Y  $ (which is recalled in the proof) is equal to the conductor ideal, where  $f^* \omega_Y$ is identified with $\sO_X$.

\end{lemma}

\begin{proof}
First assume that $X$ and $Y$ are not necessarily affine, and $\omega_X$ and $\omega_Y$ are assumed to be locally free.  We provide then the definition of the natural map $\omega_X \to f^* \omega_Y  $ compatible with restriction to open sets. Let $ \xi : f_* \omega_X \to \omega_Y \to (f_* \sO_X) \otimes_{\sO_Y} \omega_Y$ be the composition of the trace map with the natural inclusion. As both the trace map and the inclusion are compatible with restriction to open sets, indeed, $\xi$ is compatible with restrictions to open sets. However, a priori $\xi$ is only an $\sO_Y$-homomorphism, and we need to argue that it is also an $f_* \sO_X$-homomorphism. However, as $f$ is an isomorphism over an open set $U$ of $Y$, $\xi$ is an isomorphism over $U$, and hence $\xi$ an $f_* \sO_X$-homomorphism over $U$. As all the sheaves involved are torsion-free, it follows then that $\xi$ is also an $f_* \sO_X$-homomorphism.

Consider now the special affine situation of the lemma. The map $\xi$ can then be identified with the map $\zeta: \Hom_R(S,R) \to S$ defined by $\phi \mapsto \phi(1)$. It is enough to prove then that $\im \zeta = I$, where $I$ is the largest ideal of $R$ which is also an ideal of $S$. 

For $s \in S$, denote by $m_s \in \Hom_S(S,S)$ the multiplication by $s$. If $s \in I$, then $m_s \in \Hom_R(S,R)$. As $m_s(1) = s$, it follows that $I \subseteq \im \zeta$. 

For the other direction we have to show that $\im \zeta$ is an ideal also in $S$. However, for this we only have to show that $\zeta$ is $S$-linear which we have already seen, as $\zeta$ can be identified with $\xi$ and $S$ with $f_* \sO_X$.
\end{proof}

\subsection{N\'{e}ron-Severi groups}

\begin{definition}
	Let $f:X\to S$ be a morphism of normal varieties.  Then $N_1(X/S)$ is the $\bR$-vector space of $1$-cycles modulo numerical equivalence with respect to all Cartier divisors on $X$.
	
	$\rho(X/S)$ is the rank of $N_1(X/S)$.
	\end{definition}

\begin{lemma}\label{lem:base_change_rho}
	Let $f:X\to S$ be a morphism of normal varieties, and let $\phi:T\to S$ be a purely inseparable morphism.  Let $Y$ be a normal variety together with a purely inseparable morphism $\psi:T\to X$ which fits into a diagram

\begin{center}	
	\begin{tikzcd}
		Y\ar{r}{\psi}\ar{d}{g} & X\ar{d}{f}\\
		T\ar{r}{\phi} & S
		\end{tikzcd}
\end{center}
	
	Then $\rho(X/S)=\rho(Y/T)$.
	\end{lemma}
\begin{proof}
	We can extend the diagram as follows, where top and bottom compositions are (global) Frobenius.

\begin{center}	
		\begin{tikzcd}
		Y\ar{r}{\psi}\ar{d}{g} & X\ar{d}{f}\ar{r} & Y\ar{d}\\
		T\ar{r}{\phi} & S\ar{r}& T
	\end{tikzcd}
\end{center}

Pushforward of $1$-cycles via the universal homeomorphism composition yields a bijective linear map $$N_1(X/S)\to N_1(Y/T)\to N_1(X/S)$$ as required.

\end{proof}

\section{The canonical bundle formula}
\label{sec:can_bdl}

\subsection{The main theorem}
\label{subsec:thm_main}

The following theorem will imply \autoref{thm:generic} by restriction to the generic fiber of a suitably constructed fibration.  We reformulate the theorem in a geometric way in order to be able to work over a perfect field and use the theory of foliations. The deduction of \autoref{thm:generic} from \autoref{thm:foliation} will be given later in this subsection. 

As we construct $C$ via the theory of foliations, in order to prove  \autoref{thm:conductor}, we need to relate our divisor to the usual conductor.  Here we will show first that the effective divisor $C$ we find via foliations can be canonically determined in the geometrically reduced case.  In the next subsection we will show that this agrees with the canonically determined conductor divisor.  Thus the foliation method gives another way to obtain the conductor divisor in our set-up, and also provides a new proof of the main result of \cite{schroer_tate}. 

\begin{theorem}
\label{thm:foliation}
Let $\sX$ be a normal variety over a perfect field $k$ of characteristic $p>0$, and let $f : \sX \to T$ be a morphism to a normal variety over $k$. Let $\tau : T'\to T$ be a finite purely inseparable height one $k$-morphism from a normal variety and let $\sZ $ be the normalization of (the reduced subscheme associated to) $\sX \times_T T'$.  
Set $\sH \subseteq \sT_{T'}$ to be the foliation associated to $\tau$. 
Then the following  statements hold:
\begin{enumerate}
\item \label{itm:foliation:easy} $K_{\sZ/\sX} \sim (p-1)D$ for some Weil divisor $D$ on $\sZ$.
\item \label{itm:foliation:effective_local} There is a non-empty open set $U \subseteq T'$ and an effective divisor $C$ on $g^{-1}U$ satisfying $-C \sim D|_{g^{-1}U}$, where $g: \sZ \to T'$ is the induced morphism.

\end{enumerate}

From now assume in addition that $\sX\times_T T'$ is reduced.  Then the following hold:
\begin{enumerate}[resume]
\item\label{itm:foliation:unicity} There is natural way to choose $C$, which is unique after fixing $U$ small enough.
\item \label{itm:foliation:effective_global} If $T$ and $T'$ are regular, then  $g^* \det \sH - D \sim C$ for some effective divisor on $C$ on $\sZ$. Furthermore, if we trivialize $K_T$ on an open set, we obtain back the $C$ of point\autoref{itm:foliation:effective_local}.

\end{enumerate}

\end{theorem}

\begin{proof}

\noindent { \bfseries Proof of point \autoref{itm:foliation:easy}}:

Let $\phi$ be the morphism $\sZ\to\sX$.  By the universal properties of product and normalization, there is a morphism $\psi: \sX\to \sZ$ such that $\phi\circ\psi = \Fr$.  Therefore $\phi$ is a $k$-morphism of height one and by \cite{ekedahl_canonical} it is given by a foliation $\sF \subseteq \sT_{\sZ}$.  In particular, by \autoref{prop:foliation_canonical} we have 
$
  \omega_{\sZ/\sX}\cong (\det \sF)^{[p-1]}
$.
So taking $D$ to be any Weil divisor in the class of $\det \sF$ yields \autoref{itm:foliation:easy}.

\noindent { \bfseries Proof of point \autoref{itm:foliation:effective_local}:}

To show \autoref{itm:foliation:effective_local} we wish to show that after shrinking $T$, $H^0(\sZ, (\det \sF)^*) \neq 0$ holds. For this, it is enough to exhibit an embedding $\sF \hookrightarrow \sO_{\sZ}^{\oplus r}$ for some integer $r>0$.   Given this we get a non-zero morphism $\Det(\sF)\to \bigwedge^{\rk \sF}(\sO_{\sZ}^{\oplus r})\cong \sO_{\sZ}^{\oplus s}$ for some $s>0$, which gives a section of $H^0(\sZ,(\Det \sF)^*)$ by projection to one of the factors. In fact, when the geometric generic fiber of $f$ is reduced, then we will make a more canonical choice, which we describe in the proof of point \autoref{itm:foliation:unicity}.  However, this canonical choice does not matter for the proof of \autoref{itm:foliation:effective_local}, so we defer the explanation of it. 

We may replace $T$ by an open subset to assume that $T$ and $T'$ are both  smooth over $k$ and  $\Omega_{T'} \cong \sO_{T'}^{\oplus d}$, where $d:=\dim T' = \dim T$.
 Consider the cotangent sequence
\begin{equation*}
\xymatrix{
 g^* \Omega_{T'} \ar[r] & \Omega_{\sZ} \ar[r] & \Omega_{\sZ/T'} \ar[r] & 0.
}
\end{equation*}
Dualizing yields a diagram
\begin{equation}
\label{eq:foliation:main_diag}
\xymatrix@R=10pt{
& & \sF \ar@{^(->}[d] \\
0 \ar[r] & \sG:= \sHom_{\sZ} ( \Omega_{\sZ/T'}, \sO_{\sZ} )  \ar[r] &  \sT_{\sZ} \ar[r] & g^* \sT_{T'} \cong \sO_{\sZ}^{\oplus d}.
}
\end{equation}
We show that the composition $\sF \to \sO_{\sZ}^{\oplus d}$ is an embedding.  Equivalently we must show that $\sG\cap \sF =0$, i.e. for any open set $U$, $\sF(U)$ and $\sG(U)$ have no non-zero sections in common as subsheaves of $\sT_{\sZ}$.  Since $\sT_{\sZ}$ is torsion-free, it is enough to prove that $\sG (V) \cap \sF(V) =0$ holds on a single open set $V$ of $\sZ$.   

From now on we may restrict attention to an open subset $V\subset \sZ$.  Doing so, we may replace $T$, $T'$, $\sZ$ and $\sX$ in order to assume that they are affine and smooth (over $k$). 
We may further shrink $T$, $T'$, $\sZ$ and $\sX$ to assume that all the sheaves considered are locally free (i.e. $\sF$, $\sG$, $\sG\cap\sF$ etc), along with all the images, kernels and cokernels of the maps between them.  Finally, since $\sZ\to (\sX\times_TT')_{\red}$ is a birational morphism, we may further shrink $\sZ$ to assume that this is an isomorphism.    Under these new assumptions, we show in the remainder of the proof of point \autoref{itm:foliation:effective_local} that $\sG \cap \sF =0$. As we work with affine schemes, by abuse of notation, in what follows (in the proof of point \autoref{itm:foliation:effective_local}) all coherent sheaves denote the corresponding modules of global sections.

For a closed point $t\in T'$, denote $\sF\otimes_{\sO_{\sZ}}\sO_{{\sZ}_t}=\sF|_{{\sZ}_t}$ by $\sF_t$ and $\sG|_{\sZ_t}$ by $\sG_t$.  As all the sheaves involved are locally free, $\sF_t\to \sT_{\sZ}|_{\sZ_t}$ and $\sG_t\to \sT_{\sZ}|_{\sZ_t}$ are still embeddings, and also $(\sF\cap \sG)_t=\sF_t\cap\sG_t$, which is locally free of rank equal to that of $\sF\cap \sG$.  Therefore it is enough to show that $\sF_t\cap \sG_t=0$ for some fixed closed point $t\in T'$ over which $\sZ_t\neq \emptyset$.

Fix a closed point $t\in T'$ from now on for which $\sZ_t \neq \emptyset$.  Let $\sX':=\sX\times_T T'$.  We assumed that $\sZ\cong(\sX\times_T T')_{\red}$, so there is a surjection of structure sheaves 
$$\sO_{\sX}\otimes_{{\sO}_T}\sO_{T'}=\sO_{\sX'} \twoheadrightarrow \sO_{\sZ}.$$

Denote the structure sheaf of $t\in T'$ by $\sO_t$ (which is isomorphic to a finite extension of the perfect field $k$ together with quotient morphism $\sO_{T'}\to \sO_t$).  Let $\sZ_t$ and $\sX_t$ be the schemes $\sZ_t:=\sZ\times_{T'}t$ and $\sX_t:=\sX\times_{T} t = \sX' \times_{T'} t$.

We get the diagram
\begin{equation*}
\begin{tikzpicture}
  \matrix (m) [matrix of math nodes,row sep=3em,column sep=4em,minimum width=2em]
  {
     \sO_T & \sO_{T'} &  &  \sO_t \\
     \sO_{\sX} & \sO_{\sX'} & \sO_{\sZ} \\
      & & & \sO_{\sX_t} & \sO_{\sZ_t}\\
     };
  \path[-stealth]
    (m-1-1) edge  (m-2-1)
            edge  (m-1-2)
    (m-2-1.east|-m-2-2) edge (m-2-2)
    		edge  (m-3-4)
    (m-1-2) edge  (m-2-2)
    		edge  (m-2-3)
    		edge  (m-1-4)
    (m-1-4) edge  (m-3-5)	
    		edge  (m-3-4)	
    (m-2-2) edge  (m-2-3)
    		edge [dashed] (m-3-4)
    (m-2-3) edge  (m-3-5)
    (m-3-4) edge [dashed] (m-3-5)
            ;
\end{tikzpicture}
\end{equation*}
As $\sO_{X_t} \to \sO_{Z_t}$ can be viewed as the base-change of the surjective map $\sO_{\sX'} \to \sO_{\sZ}$, we see that it is also surjective. 
We also have
\begin{equation}
\label{eq:relative_differentials}
\sHom_{\sZ} (\Omega_{\sZ/T'}, \sO_{\sZ} ) |_{\sZ_t} \cong 
\underbrace{\sHom_{\sZ_t} \left(\Omega_{\sZ/T'} |_{\sZ_t}, \sO_{\sZ_t} \right) }_{\Omega_{\sZ/T'}  \textrm{ is locally free}}
\cong 
\underbrace{\sHom_{\sZ_t} \left(\Omega_{\sZ_t/k(t)} , \sO_{\sZ_t} \right)}_{\textrm{Base change \cite[II.8.10]{hartshorne_ag}}}.
\end{equation}
In fact, we claim more: the left and right hand sides of \autoref{eq:relative_differentials} both naturally embed in $\sT_{\sZ}|_{\sZ_t}$, and these embeddings are equal under the above isomorphism.   To see this, note that the two embeddings are equivalent to 
$$\sHom_{\sO_{\sZ_t}}(\Omega_{\sZ/T'}\otimes {\sO_{\sZ_t}},\sO_{\sZ_t})\hookrightarrow \sHom_{\sO_{\sZ_t}}(\Omega_{\sZ/k}\otimes \sO_{\sZ_t},\sO_{\sZ_t})\hookleftarrow\sHom_{\sO_{\sZ_t}}(\Omega_{\sZ_t/k},\sO_{\sZ_t})$$
and these inclusions come from dualizing the surjections
$$\Omega_{\sZ/T'}\otimes \sO_{\sZ_t}\twoheadleftarrow \Omega_{\sZ/k}\otimes\sO_{\sZ_t}\twoheadrightarrow\Omega_{\sZ_t/k}.$$
These two surjections are equal under the identification of $\Omega_{\sZ/T'}\otimes \sO_{\sZ_t}$ with $\Omega_{\sZ_t/k}$.

Furthermore, $\sT_{\sZ}|_{\sZ_t}$ can be identified with the sheaf of $k$-derivations $\sO_{\sZ} \to \sO_{\sZ_t}$, which implies together with the arguments of the previous paragraph that $\sG_t$ is exactly the sheaf of those $k$-derivations $\sO_{\sZ} \to \sO_{\sZ_t}$ that can be obtained from a $k$-derivation $\sO_{\sZ_t} \to \sO_{\sZ_t}$ by precomposing with the natural surjection $\sO_{\sZ} \to \sO_{\sZ_t}$. Let $\delta : \sO_{\sZ_t} \to \sO_{\sZ_t}$ be  a non-zero $k$-derivation of $\sO_{\sZ_t}$, corresponding then to an element of $\sG_t$ with image in  
 $\sT_{\sZ}|_{\sZ_t}$ being the composition
\begin{equation*}
\xymatrix{
\sO_{\sZ} \ar@{->>}[r] \ar@/^1.pc/[rr]^{\delta'} & \sO_{\sZ_t} \ar[r]^{\delta} & \sO_{\sZ_t} .
}
\end{equation*}
If $\sF_t \cap \sG_t \neq 0$, then by the affineness assumptions, they also have a common non-zero section. So, assume now that the above considered $\delta$ yields $0 \neq \delta' \in \sF_t$, that is, it comes from a $k$-derivation $d : \sO_{\sZ} \to \sO_{\sZ}$ in $\sF$ by composing $d$ with the natural surjection $\sO_{\sZ} \twoheadrightarrow \sO_{\sZ_t}$. By definition the derivations in $\sF$ are exactly the ones that are 0  when restricted to $\sO_{\sX}$. In particular, we have a commutative diagram as follows.
\begin{equation*}
\xymatrix{
\sO_{\sZ} \ar@{->>}[d] & \ar[l]_d \ar[ld]_{\delta'} \sO_{\sZ} \ar@{->>}[d] & \sO_{\sX} \ar@{_(->}[l] \ar@{->>}[d] \ar@/_1.5pc/[ll]_0 \ar[ld]^{\gamma} \\
\sO_{\sZ_t} & \ar[l]^{\delta} \sO_{\sZ_t} & \sO_{\sX_t} \ar@{->>}[l]
}
\end{equation*}

Let $t'$ be the image of $t$ in $T$. Since $k$ is perfect, and $k(t')$ is a finite extension, $k(t')$ is perfect too. Hence, in the factorization $k(t') \to k(t) \to k(t')$ given by the height $1$ assumption on $T$, the composition is an isomorphism, and hence so is $k(t') \to k(t)$. This then shows that $\sX \times_T t \cong \sX \times_T t'$, which then in turn implies that $\sO_{\sX} \to \sO_{\sX_t}$ is surjective. 

Since the composition $\sO_{\sX} \to \sO_{\sZ_t}$ (from the top right corner to the bottom left corner) is $0$, $\delta$ has to be also zero as $\gamma$ is surjective. This is a contradiction, which concludes our proof of \autoref{itm:foliation:effective_local}.

\noindent { \bfseries Proof of point \autoref{itm:foliation:unicity}:}

To prove \autoref{itm:foliation:unicity}, note that after restricting $T$ and $T'$ to be regular, as in the proof of point \autoref{itm:foliation:effective_local}, $\sH$ is exactly the kernel of $\sT_{T'} \to \tau^* \sT_T$. Since $\sF$ maps to $0$ in $\phi^* \sT_{\sX}$, by the commutativity of the next diagram, $\sF$ maps to $g^* \sH \subseteq g^* \sT_{T'}$ ($g^* \sH$ is a subsheaf of  $g^* \sT_{T'}$ because by restricting the base we may assume that $g$ is flat).
\begin{equation*}
\xymatrix{
\sT_{\sZ} \ar[r] \ar[d] & \phi^* \sT_{\sX} \ar[d] \\
g^* \sT_{T'} \ar[r] & g^* \tau^* \sT_T \cong \phi^* f^* \sT_T
}
\end{equation*}
After shrinking $T$ and $T'$ small enough we may also assume that $\sH \cong \sO_{T'}^{\oplus i}$, where $p^i = \deg \tau$, and also that these form the first $i$ summands of $\sT_{T'} \cong \sO_{T'}^{\oplus d}$. Furthermore, by the reducedness assumption, $\deg \tau = \deg \phi$. Since, $\deg \phi = p^{\rk \sF}$, $\rk \sF =i$ holds also.  
Hence, $\sF \hookrightarrow g^* \sH$ is generically an isomorphism. In particular, $g^* (\det \sH)^*  \to (\det \sF)^*$ is a non-zero injection, and hence a generator of $\det \sH$ yields a unique non-zero section of $( \det \sF)^*$, up to multiplication by a unit of $\sO_T$. This then defines a unique $C$, which concludes the proof of \autoref{itm:foliation:unicity}.

\noindent { \bfseries Proof of point \autoref{itm:foliation:effective_global}:}

The proof of point \autoref{itm:foliation:effective_local} and of point \autoref{itm:foliation:unicity} yield with a little more work the present point too. We may consider the diagram \autoref{eq:foliation:main_diag}  in the global (base) case too, so before restricting $T$, and its row is exact also in this case. Then, as $\sG$ and $\sF$ are torsion-free sheaves, the proof of  point \autoref{itm:foliation:effective_local} implies that $\sG \cap \sF = 0$ holds also in this global case. So, we obtain an injection $\sF \hookrightarrow g^* \sT_{T'}$, which factors through $g^* \sH$ by the proof of point \autoref{itm:foliation:unicity}. In fact, for the latter  we need to show that $g^* \sH \to g^* \sT_{T'}$ is an inclusion. Indeed, by just looking at the ranks of $\sH$, $\sT_{T'}$ and $\sT_T$, it is generically an injection, and then using that $g^* \sH$ is torsion-free we obtain that it is an injection everywhere.

In particular, by the degree argument in the proof of point \autoref{itm:foliation:unicity}, the above injection yields a non-zero homomorphism $(g^* \det \sH)^* \to (\det \sF)^*$. This corresponds to an effective divisor linearly equivalent to $g^* (\det \sH) + (\det \sF)^* = g^* (\det \sH) -D$, concluding the proof of point \autoref{itm:foliation:effective_global}.

\end{proof}

\begin{remark}
	Most likely, \autoref{thm:foliation}.\autoref{itm:foliation:effective_global} is true in greater generality, if one assumes equidimensionality of $f$. Probably then it is enough to assume that $T$ and $T'$ are normal, using reflexivization at multiple places. However, we leave the precise checking to the reader, as we do not see an important application of this set of assumptions at the present time. 
\end{remark}

\begin{proof}[Proof of \autoref{thm:generic}]
	
	We start with arbitrary field extension $k'/k$ of characteristic $p$, and make reductions to a more convenient situation. 
	
	Let $A$ be an integrally closed subring of $k$ which is a finitely generated $\bZ$-algebra, such that there is a scheme $\mathcal{X}_A$ of finite type over $\Spec(A)$ such that $X=\mathcal{X}_A\otimes_A k$. Let $\tilde{k}$ be the fraction field of $A$, so that the generic fiber of $\mathcal{X}_A\to \Spec(A)$ is $X_{\tilde{k}}$.  This satisfies $X_{\tilde{k}}\otimes_{\tilde{k}}k\isom X$.  By flat base change we have 
	$$\phi^*K_{X_{\tilde{k}}/\tilde{k}}=K_{X/k}.$$ 
	So we may replace $k$ by $\tilde{k}$ to assume that $k$ is the function field of a normal variety $\Spec(A)$ over a perfect field $l$.
	
	Next, note that the normalization of $X\otimes_k k'$ is defined over some finitely generated subextension $k''/k$.  Therefore we can replace $k'$ by $k''$ to assume that $k'/k$ is a finitely generated field extension.
	
	Now let $t_1,...,t_n$ be a transcendence basis of $k'/k$.  Then let $k^t=k(t_1,...,t_n)$, so that $k'/k^t$ is algebraic and $k^t/k$ is purely transcendental.  Let $B=A[t_1,...,t_n]$.  This is integrally closed if and only if $A$ is, because $B\to A$ is base change by $l\to l[t_1,...,t_n]$, which is smooth.  Similarly we see that $X_{k^t}$ is normal and that $K_{X_B}/B$ is the pullback of $K_{X_A}/A$.  We are therefore free to replace $A, \mathcal{X}_A, k$ etc. with $B,\mathcal{X}_B, k[t_1,...,t_n]$.  As a result, we may now assume that $k'$ is a finite algebraic extension of $k$ (as we assumed already that it was finitely generated), in addition to $k$ being the function field of a variety over a perfect field.
	
	Now let $k''$ be the perfect closure of $k$ within $k'$.  We may decompose $k''/k$ into a sequence of height one extensions: $k''=k_m/k_{m-1}/.../k_1/k_0=k$ (for example by repeatedly adjoining elements $a\in k''\setminus k$ such that $a^p\in k$).  Let $B_i$ be the integral closure of $A$ in $k_i$, and $\sX_{B_i}$ be the normalization of $\sX_{B_{i-1}}\times_{B_{i-1}}B_i$.  Then $X_{k_i}\to X_{k_{i-1}}$ is the induced map on generic fibers of the $\sX_{B_i}\to\sX_{B_{i-1}}$.  The required formula comes from repeatedly restricting the formula of \autoref{thm:foliation} to the generic fibers, and noting that the base changes and normalization commute.  Finally, we perform a base change by $k'/k''$: this is a finite separable field extension, so we have $K_{X_k'/k'}=\pi^*K_{X_k''/k''}$, and the Weil divisor $(p-1)\pi^*C$ also has coefficients divisible by $p-1$.
\end{proof}

\subsection{Equivalence of our divisor and  the conductor}
\label{subsec:algebraic_lemmas}

In this subsection, we will prove \autoref{thm:conductor}, by proving that the natural choice of $(p-1)C$ given to us in \autoref{thm:foliation}\autoref{itm:foliation:unicity} is in fact equal to the conductor of the normalization.  As a result we can deduce that this conductor will always have coefficients divisible by $p-1$, as in Tate's formula.  This will involve several preparatory commutative algebra lemmas.


\begin{notation}
	Throughout \autoref{subsec:algebraic_lemmas}, adjoining $x$ and $y$ means adjoining independent variables, and adjoining anything else means taking the ring generated by those elements inside an existing ring. 
\end{notation}

\begin{lemma}
	\label{lem:generated_subring}
	Let $(B,n)$ be a DVR with uniformizer $s$, let $D \subseteq B$ be a subring such that $n^i \subseteq D$ for some integer $i>0$, and such that the composition $D \to B \to B/n$ is surjective. Then $D[s] = B$.
\end{lemma}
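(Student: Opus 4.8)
The inclusion $D[s] \subseteq B$ is immediate, so the entire content is the reverse inclusion $B \subseteq D[s]$. My plan is to expand an arbitrary element of $B$ in powers of the uniformizer $s$ with ``digits'' drawn from $D$, and then to \emph{truncate} this expansion at level $i$, using the hypothesis $n^i \subseteq D$ to absorb the remaining tail into $D$. The surjectivity of $D \to B/n$ will supply the digits, and the containment $n^i \subseteq D$ will terminate the process.

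Concretely, I would fix $b \in B$, set $c_0 := b$, and use the surjectivity of $D \to B/n$ in the following form: for any $c \in B$ there is $d \in D$ with $c - d \in n = (s)$, i.e. $c = d + s c'$ for some $c' \in B$. Applying this repeatedly produces elements $d_0, \dots, d_{i-1} \in D$ and $c_1, \dots, c_i \in B$ satisfying $c_j = d_j + s c_{j+1}$. Substituting these relations successively yields the finite expansion
\begin{equation*}
b = c_0 = \sum_{j=0}^{i-1} s^j d_j + s^i c_i .
\end{equation*}

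The partial sum $\sum_{j=0}^{i-1} s^j d_j$ visibly lies in $D[s]$. For the remainder term I would observe that $s^i c_i \in s^i B = n^i \subseteq D \subseteq D[s]$, whence $b \in D[s]$, completing the argument.

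I do not anticipate a serious obstacle here; the only real idea is the final step. One might be tempted to write a formal infinite $s$-adic expansion $b = \sum_{j \ge 0} s^j d_j$, which would require a completeness or Krull-intersection argument to justify convergence. The point is that the hypotheses are tailored precisely to avoid this: stopping after exactly $i$ steps and invoking $n^i \subseteq D$ places the entire tail inside $D$, so that only finitely much of the expansion is ever needed. Recognizing that $n^i \subseteq D$ is what makes the truncation legitimate is the single conceptual move in the proof.
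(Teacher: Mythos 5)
Your proof is correct and is essentially the same argument as the paper's: the paper phrases it as an induction producing approximations $d_j \in D[s]$ with $b - d_j \in n^j$, which is exactly your truncated $s$-adic expansion $b = \sum_{j=0}^{i-1} s^j d_j + s^i c_i$, with the same use of surjectivity of $D \to B/n$ for the digits and of $n^i \subseteq D$ to absorb the tail. No gaps.
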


\begin{proof}
	Fix $b \in B$. We want to prove that $b \in D[s]$.  It is enough to prove that \emph{for each integer $j>0 $, there is a $d_j \in D[s]$, such that $b- d_j   \in n^j$}. Indeed, then it would follow that $b - d_i \in n^i \subseteq D$, which would conclude the proof of the lemma.  
	
	So, we are left to prove the above claim. 
	For $j=0$ the statement is vacuous as we may take $d_j=0$. So, we are left to show the inductive step. Consider the image $\ov$ of $v:=\frac{b- d_{j-1}  }{s^{j-1}} \in B$ in $B/n$. As the composition $D \to B \to B/n$ is surjective, there is a $d \in D$ mapping to $\ov$. In particular, $v -d \in n$. Hence, 
	\begin{equation*}
	n^j \ni (v-d) s^{j-1} = b - d_{j-1} - d s^{j-1} .
	\end{equation*}
	Therefore, setting $d_j:=d_{j-1} + d s^{j-1}$ concludes the inductive step of our claim, and then also the claim itself. 
	
\end{proof}

\begin{lemma}  
	\label{lem:DVR_s_inseparable_structure}
	Let $(A,m)$ be a DVR over a field  $k$  of characteristic $p>0$, and $k[t]$ be a non-trivial inseparable extension of $k$ such that $t^p \in k$. Assume that $A[t]:=A \otimes_k k[t] \cong A[x]/(x^p - t^p)$ is integral, that it satisfies the property \Nagataone (its integral closure is module finite over $A[t]$), and let $B$ be the normalization of $A[t]$. Then our initial statement is that $B$ is a DVR. 
	
	Denote by $n$ the maximal ideal of $B$, and let $u$ be a generator of $n$.
	Our main statement is the following:
	\begin{enumerate}
		\item \label{itm:case_surjective} First case: if $A[t] \to B/n$ is surjective, then $B=A[t,u]$.
		\item \label{itm:case_not_surjective} Second case: if $A[t] \to B/n$ is not surjective, then:
		\begin{enumerate}
			\item \label{itm:same_generator} $u$ can be chosen to be a generator also for $m$, and hence to be in $A$,
			\item \label{itm:deg_p_field_extension} $A/m \to B/n$ is a field extension of degree $p$ and hence any element of $B/n$ not in $A/m$ generates this extension, 
			\item \label{itm:case_not_surjective:Artinian} $A[t]/m A[t]$ is an Artinian ring of length $p$, such that the kernel of $A[t]/m A[t] \to B/n$ is the  radical ideal of $A[t]/m A[t]$ and the image of $A[t]/m A[t] \to B/n$ agrees with that of $A/m \to B/n$.
		\end{enumerate}
	\end{enumerate}

\end{lemma}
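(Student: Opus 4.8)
First I would analyze the fraction fields. Since $A[t]$ is an integral domain with $t^p\in k\subseteq A$, the extension $K:=\Frac(A)\hookrightarrow L:=\Frac(A[t])$ is purely inseparable of degree $p$ with $L=K(t)$, so $L^p\subseteq K$; because $A$ is normal this forces $b^p\in A$ for every $b\in B$, i.e. $B^p\subseteq A$. By \Nagataone the ring $B$ is module-finite over $A[t]$, hence over $A$, and as it is a torsion-free finitely generated module over the PID $A$ it is free, necessarily of rank $[L:K]=p$. At the same time $B$ is a one-dimensional normal Noetherian domain, hence Dedekind; and from $B^p\subseteq A$ the set $\{b\in B: b^p\in m\}$ is the unique prime of $B$ over $m$. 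Since $B/A$ is integral every maximal ideal of $B$ contracts to $m$, so $B$ is local, hence a DVR.

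\textbf{Ramification/residue dichotomy.} Write $\kappa:=A/m$, $\lambda:=B/n$, define $e$ by $mB=n^e$ and set $f:=[\lambda:\kappa]$. Freeness of $B$ over $A$ gives $\dim_\kappa B/mB=p$, and the $n$-adic filtration of $B/mB$ (whose graded pieces are copies of $\lambda$) gives $\dim_\kappa B/mB=ef$; hence $ef=p$. Comparing valuations of a uniformizer $u$ of $B$, for which $u^p\in A$, yields $e\mid p$, while $\lambda^p\subseteq\kappa$ (again from $B^p\subseteq A$) gives $f\in\{1,p\}$. Therefore $(e,f)$ is either $(p,1)$ or $(1,p)$. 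I would also record that the image of the residue map $A[t]\to\lambda$ is the subfield $\kappa[\bar t]$ generated by the class $\bar t$ of $t$.

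\textbf{The two cases.} In the surjective case $\kappa[\bar t]=\lambda$, and I would invoke \autoref{lem:generated_subring} with $D=A[t]$ and $s=u$: the conductor of $A[t]\subseteq B$ is a nonzero ideal of the DVR $B$, hence contains some power $n^i\subseteq A[t]$, so the hypotheses of the lemma hold and it outputs $A[t][u]=A[t,u]=B$. In the non-surjective case $\kappa[\bar t]\subsetneq\lambda$ forces $f\neq1$, hence $(e,f)=(1,p)$ and, as $p$ is prime, $\bar t\in\kappa$. Then $mB=n$ gives (a): a uniformizer $s$ of $m$ is also one of $n$, so one may take $u:=s\in A$; and $f=p$ with $\lambda/\kappa$ purely inseparable of prime degree gives (b). For (c) I would lift $\bar t$ to $a\in A$, so that $t-a\in n$ and thus $t^p-a^p\in m$; writing $c:=t^p$ this shows the class $\bar c\in\kappa$ equals $\bar a^p$, whence $A[t]/mA[t]\cong\kappa[x]/(x^p-\bar c)=\kappa[x]/((x-\bar a)^p)$ is local Artinian of length $p$ with nilradical $(x-\bar a)$. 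The residue map sends $x\mapsto\bar t=\bar a\in\kappa$, so its image is exactly the image of $A/m$ and its kernel is precisely this nilradical.

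\textbf{Main obstacle.} The delicate point is the preliminary claim: proving that $B$ is \emph{local} and pinning down the $(e,f)$ possibilities, since for inseparable extensions there is no free recourse to the clean identity $\sum e_if_i=[L:K]$. I would circumvent this entirely through the two structural facts $B^p\subseteq A$ and the freeness of $B$ over the DVR $A$, which together force $ef=p$ and bound $e,f\in\{1,p\}$; everything else is then bookkeeping with residue fields and the identity $x^p-\bar a^p=(x-\bar a)^p$ in characteristic $p$.
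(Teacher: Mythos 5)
Your proof is correct and follows essentially the same route as the paper: freeness of $B$ over $A$ of rank $p$, the $ef=p$ bookkeeping to separate the two cases, \autoref{lem:generated_subring} for the surjective case, and the prime-degree residue-field argument for the other. The only cosmetic differences are that you derive locality of $B$ from $B^p\subseteq A$ rather than from the homeomorphism of spectra under finite purely inseparable morphisms, and you make part (c) explicit via the presentation $\kappa[x]/((x-\bar{a})^p)$ where the paper argues abstractly with the unique maximal ideal of the Artinian local ring $A[t]/mA[t]$.
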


\begin{proof}
	First we show our initial statement that $B$ is a DVR. Indeed, as $A\subset B$ is inseparable, finite and of height $1$, $\spec B \to \Spec A$ is a homeomorphism. Hence, $\dim B=1$ and $B$ is also local. Furthermore, as $A\subset B$ is finite, and $A$ is Noetherian, so is $B$.  So $B$ is an integrally closed local Noetherian domain of dimension $1$, which implies that it is a DVR.
	
	According to \autoref{def:conductor}, the conductor ideal exists and so there is an integer $i>0$ such that $n^i$ is also an ideal of $A[t]$. 
	
	We turn now to proving the main statements. 
	\begin{enumerate}
		\item Here we apply \autoref{lem:generated_subring} by setting $D=A[t]$, and $s=u$.
		\item We show the statements one by one:
		\begin{enumerate}
			\item

			By the fundamental theorem of finitely generated modules over a PID and the \Nagataone assumption, $B$ has to be a free module over $A$. As $\Spec B \to \Spec A[t]$ is an isomorphism at the generic point (see discussion in \autoref{subsec:conductor} about conductors of \Nagataone rings), it follows that $B$ has to have rank $p$ over $A$. In particular, $\dim_K (B/mB)=p$, where $K:=A/m$. 
			
			Now, consider the field extension $K \hookrightarrow L$, where $L= B/n$. As the composition $A[t] \to L$ is not surjective, this field extension is not trivial. However, $L$ is a quotient of $B/mB$ by its radical ideal. So, we have
			\begin{equation*}
			p = \dim_K (B/mB) = (\dim_L (B/mB)) \cdot (\dim_K L),
			\end{equation*}
			where the second factor is greater than $1$. It follows $\deg [L :K]=p$ and $B/mB \to L$ is an isomorphism. However, then $n=mB$ and hence $u$ can be chosen to be any generator of $m$ (which we assume from now).
			\item We already proved the degree $p$ part in the previous point. Then the rest follows using that $p$ is prime. 
			
			\item
			Set $R:=A[t]/mA[t]$, and let $I \subseteq R$ be the unique maximal ideal (we know there is a unique one as $\Spec A[t]$ is homeomorphic to $\Spec A$). Then $R/I$ is a field sitting as an intermediate field: $K \subseteq R/I \subseteq L$. As $[L:K]=p$, either $R/I = K$ or $R/I=L$. However, in the latter case this isomorphism would imply that $A[t] \to L$ is a surjection, which we assumed was not the case. Hence, $R/I=K$, and then $I$ is the nil-radical of $R$ (as $R$ is Artinian). In particular $I$ has to lie in the kernel of $R \to L$ (as $L$ is a field), and then $I$ is in fact equal to this kernel. This shows that the image of the map $R \to L$ is indeed $K$, which then concludes our proof. 
			
		\end{enumerate}

	\end{enumerate}
	
\end{proof}

\begin{remark}
	Both cases of \autoref{lem:DVR_s_inseparable_structure} occur in examples:
	\begin{enumerate}
		\item The simplest examples are generally of this form.  For example, consider the localization at the maximal ideal $(x,y^2+s)$ of $A=k[x,y]/(y^2+x^3+s)$ where $k$ is a field of characteristic $2$ and $s \in k \setminus k^2$, and take the extension to be $k\left( \sqrt{s} \right)$.
		
		\item An example of this type is given by $A=k[x,y]/(sx^2+xy^2+1)$, where $k$ is a field of characteristic $2$ and $s \in k \setminus k^2$.  The non-smooth locus is given by the vanishing of $y$.  $m:=(y)$ is a maximal ideal of $A$ because $A/m\cong k[s^{1/2}]$, and so $A_m$ is a regular DVR.  Replace $A$ by $A_m$.
		
		Letting $t=s^{1/2}$, $k'=k[t]$ and $x'=tx+1$, we have $$A[t]\cong (k'[x',y]/(x'^2+(t^{-1}x'+t^{-1})y^2))_{(y,x')}$$
		and 
		$$A[t]/yA[t]\cong k'[x']/(x'^2).$$  
		If we let $u=\frac{x'}{y}$ then $$B\cong (k'[u,y]/(u^2+t^{-1}uy+t^{-1}))_{(y)}$$
		where $y$ is still a uniformizer, and $B/n=B/yB\cong k'[s^{1/4}]$.

	\end{enumerate}
\end{remark}

\begin{lemma}
	\label{lem:derivation_divided_by_u}
	Let $(B,n)$ be a DVR with uniformizer $u$.  Let $A$ be a subring of $B$ and suppose we can write $B$ as  $B=A[r_1,\dots,r_d]$ for finitely many elements $ r_i\in B$. Assume that there is a $\Delta \in \Der_A(B)$ such that $\Delta(r_i)=u^{a_i} r_i$ for some integers $a_i>0$. Then, $\Delta/u \in \Der_A(B)$.
\end{lemma}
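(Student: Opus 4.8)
The plan is to reduce the statement to the single claim that $\Delta$ maps $B$ into the maximal ideal $n=uB$. Indeed, suppose we have established $\Delta(B)\subseteq uB$. Since $B$ is a domain and $u\neq 0$, multiplication by $u^{-1}$ is injective on $\Frac(B)$, so setting $(\Delta/u)(b):=\Delta(b)/u$ yields a well-defined $A$-linear map $\Delta/u:B\to B$. That it is a derivation is then automatic, because scaling a derivation by any ring element produces another derivation: for $c\in\Frac(B)$ and $b,b'\in B$,
\begin{equation*}
(c\Delta)(bb')=c\bigl(\Delta(b)b'+b\Delta(b')\bigr)=(c\Delta)(b)\,b'+b\,(c\Delta)(b').
\end{equation*}
Applying this with $c=u^{-1}$, and using that the output lands in $B$ by assumption, shows $\Delta/u\in\Der_A(B)$. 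So all the content lies in the inclusion $\Delta(B)\subseteq uB$.

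To prove that inclusion, I would exploit the generation hypothesis $B=A[r_1,\dots,r_d]$: every $b\in B$ is a polynomial in the $r_i$ with coefficients in $A$, say $b=\sum_{\mathbf e}c_{\mathbf e}\,r_1^{e_1}\cdots r_d^{e_d}$ with $c_{\mathbf e}\in A$. Because $\Delta\in\Der_A(B)$ annihilates $A$, it is $A$-linear, so $\Delta(b)=\sum_{\mathbf e}c_{\mathbf e}\,\Delta\bigl(r_1^{e_1}\cdots r_d^{e_d}\bigr)$, and it suffices to treat a single monomial. The Leibniz rule gives
\begin{equation*}
\Delta\bigl(r_1^{e_1}\cdots r_d^{e_d}\bigr)=\sum_{i=1}^d e_i\,r_1^{e_1}\cdots r_i^{e_i-1}\cdots r_d^{e_d}\,\Delta(r_i),
\end{equation*}
and each summand carries the factor $\Delta(r_i)=u^{a_i}b_i$ with $a_i>0$, hence lies in $uB$. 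Thus $\Delta$ of every monomial lies in $uB$, and therefore so does $\Delta(b)$.

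I do not expect a serious obstacle here: the hypothesis $a_i>0$ is precisely what forces the image of each generator into $uB$, and the Leibniz rule propagates this to all of $B$. The only points needing a little care are that $\Delta$ genuinely kills the coefficient ring $A$ (so the $c_{\mathbf e}$ may be pulled outside $\Delta$) and that division by $u$ is legitimate, both of which are guaranteed by $\Delta\in\Der_A(B)$ and by $B$ being a domain with $u\neq 0$.
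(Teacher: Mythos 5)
Your proposal is correct and follows essentially the same route as the paper's proof: reduce to showing $\Delta(B)\subseteq n$, write an arbitrary $b\in B$ as a polynomial in the $r_i$ over $A$, and use the Leibniz rule together with $a_i>0$ to conclude. You simply spell out the details (the $A$-linearity, the monomial computation, and the verification that dividing by $u$ again yields a derivation) that the paper leaves implicit.
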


\begin{proof}
	We have to show that for every $b \in B$, $\Delta(b) \in n$. However, we may write $b = q(r_1,\dots,r_d)$ for some $q(x_1,\dots,x_d) \in A[x_1,\dots,x_n]$. Then our assumptions show that $\Delta(b) = \Delta(q(r_1,\dots,r_d)) \in n$.
\end{proof}

\begin{lemma}
	\label{lem:conductor}
	In the situation of \autoref{lem:DVR_s_inseparable_structure} assume also that $B \neq A[t]$, and we are given a $\Delta \in \Der_A(B)$, such that
	\begin{enumerate}
		\item $\Delta/u$, as a function $B\to K(B)$, does not define an element of $\Der(B)$, 
		and
		\item $\Delta(t) = u^a$ for some integer $a>0$.
	\end{enumerate}
	Let $c>0$ be the smallest integer such that $n^c \subseteq A[t]$. Then $(p-1)a \leq c$.
\end{lemma}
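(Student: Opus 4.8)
My plan is to reduce to the case where the normalization is monogenic and then extract the factor $p-1$ from a Hasse--Wronskian computation combined with Gorenstein symmetry of the conductor; in fact this will give the sharp equality $c=(p-1)a$.

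\emph{Reduction to the monogenic case.} First I would note that since $A\subseteq B$ is purely inseparable of height one and degree $p$, every element of $B$ has $p$-th power in $A$. Choosing $\theta\in B$ lifting a generator of the residue extension in \autoref{itm:case_not_surjective}, or a uniformizer in \autoref{itm:case_surjective}, one checks that $B=A[\theta]\cong A[X]/(X^p-\gamma)$ with $\gamma=\theta^p\in A$. Hence $\Omega_{B/A}=B\,d\theta$ is free of rank one and $\Der_A(B)=B\,\partial_\theta$. As $\Delta(A)=0$ we may write $\Delta=\Delta(\theta)\,\partial_\theta$, and the hypothesis $\Delta/u\notin\Der(B)$ is then exactly the statement that $\Delta(\theta)$ is a unit. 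Writing $t=g(\theta)$ for a polynomial $g$ of degree $<p$ over $A$, the relation $\Delta(t)=u^a$ becomes $\Delta(\theta)\,g'(\theta)=u^a$, so that $v(g'(\theta))=a$, where $v$ denotes the valuation of $B$.

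\emph{The length of $B/A[t]$.} Next I would compute $\dim_k(B/A[t])$ with $k=A/m$. As $A[t]$ and $B$ are both free of rank $p$ over $A$, this length equals $v_A(\det M)$, where $M$ expresses $1,t,\dots,t^{p-1}$ in the basis $1,\theta,\dots,\theta^{p-1}$. Using the Hasse derivatives $D_i$ with respect to $\theta$ (which preserve $B$ and satisfy $\sum_i D_i(f)X^i=f(\theta+X)$), one has $\det\big(D_i(t^j)\big)_{i,j}=\det\big(D_i(\theta^l)\big)\cdot\det M=\det M$, since $\big(D_i(\theta^l)\big)=\big(\binom{l}{i}\theta^{l-i}\big)$ is unipotent upper triangular. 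On the other hand $\sum_i D_i(t^j)X^i=g(\theta+X)^j$, and subtracting the constant term $g(\theta)$ is a unipotent column operation, so the determinant equals that of the coefficients of $\big(g(\theta+X)-g(\theta)\big)^j$. This series has $X$-order exactly $j$ with leading coefficient $g'(\theta)^j$, yielding the triangular product $\prod_{j=0}^{p-1}g'(\theta)^j=g'(\theta)^{\binom{p}{2}}$. Thus $\det M=\pm g'(\theta)^{\binom{p}{2}}$, and since $v=e\cdot v_A$ on $A$ with $e$ the ramification index, $\dim_k(B/A[t])=v_A(\det M)=\tfrac{p(p-1)}{2}\cdot\tfrac{a}{e}$.

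\emph{Conductor and symmetry.} Finally I would relate this to the conductor $\mathfrak c=n^c$. The $n$-adic filtration of $B/n^c$ gives $\dim_k(B/\mathfrak c)=cf$ with $f=[B/n:k]$. Since $A[t]=A[X]/(X^p-t^p)$ is a hypersurface, hence Gorenstein, its conductor is symmetric, i.e. $\dim_k(A[t]/\mathfrak c)=\dim_k(B/A[t])$, so $\dim_k(B/\mathfrak c)=2\dim_k(B/A[t])$. Combined with $ef=p$ this gives $cf=p(p-1)a/e$ and hence $c=(p-1)a$, which is even sharper than the claimed inequality $(p-1)a\leq c$.

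The step I expect to be most delicate is this Gorenstein symmetry, which supplies the crucial factor $2$: a naive length count only yields $\dim_k(B/\mathfrak c)\geq\dim_k(B/A[t])$ and hence the weaker bound $c\geq(p-1)a/2$. The symmetry is genuinely needed in \autoref{itm:case_not_surjective}, where the non-normality lives entirely in the residue field extension and is invisible to the valuation, so that the elementary numerical-semigroup argument available in \autoref{itm:case_surjective} (there $v(A[t])=\langle p,a+1\rangle$, whose conductor is $(p-1)a$) has no analogue.
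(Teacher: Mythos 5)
Your proof is correct, and it takes a genuinely different route from ours; it even yields the sharper equality $c=(p-1)a$ rather than just the inequality, which is consistent with both examples following \autoref{lem:DVR_s_inseparable_structure}. Our argument treats the two cases of \autoref{lem:DVR_s_inseparable_structure} separately: in case \autoref{itm:case_surjective} we run a descent on the set of exponents $j$ with $p\nmid j$ and $B^\times u^j\cap A[t]\neq\emptyset$, using $\Delta$ to step down by $a+1$; in case \autoref{itm:case_not_surjective} we apply $\Delta$ exactly $p-1$ times to a relation $u^{d_r}r=\sum a_it^i$ and exhibit an $r$ with $\Delta^{p-1}(r)$ a unit. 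You replace both by a single length computation: the Hasse--Wronskian identity $\det M=g'(\theta)^{p(p-1)/2}$ gives $\dim_K(B/A[t])=p(p-1)a/(2e)$, and the classical symmetry of the conductor for one-dimensional Gorenstein local rings (Ap\'ery--Gorenstein--Serre: $\ell(S/R)=\ell(R/\mathfrak{c})$ for $R$ Gorenstein with finite normalization $S$, applicable here since $A[t]\cong A[X]/(X^p-t^p)$ is a hypersurface over the DVR $A$) doubles this to $\dim_K(B/\mathfrak{c})=cf$, whence $c=(p-1)a$ from $ef=p$. This is more conceptual, uniform in the two cases, and explains where the factor $p-1$ comes from; as you note, the Gorenstein symmetry is exactly what upgrades the trivial bound $c\geq(p-1)a/2$. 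The one step you should not leave as ``one checks'' is the monogenicity $B=A[\theta]$: in case \autoref{itm:case_not_surjective} it is Nakayama applied to a lift of a generator of $L/K$ (using $e=1$, $f=p$); but in case \autoref{itm:case_surjective} you must first observe that $ef=p$ together with $B\neq A[t]$ forces $e=p$ and $f=1$ --- otherwise $f=p$ and surjectivity of $A[t]\to B/n$ would give $B=A[t]$ by Nakayama --- and only then does the valuation argument (the $u^i$ have pairwise distinct valuations modulo $p$) show that $1,u,\dots,u^{p-1}$ is an $A$-basis of $B$. Similarly, the identity $\sum_{i<p}D_i(t^j)X^i\equiv g(\theta+X)^j$ modulo $X^p$ deserves the one-line justification $(\theta+X)^p-\gamma=X^p$. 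With these details inserted, the proof is complete.
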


\begin{proof}
	We present separate proofs for the two cases of \autoref{lem:DVR_s_inseparable_structure}. 
	
	\noindent {\bfseries Case \autoref{itm:case_surjective} of \autoref{lem:DVR_s_inseparable_structure}}:
	
	First note that $\Delta(u)$ is a unit in $B$. Indeed, if it was not, then using that $B=A[t,u]$ (\autoref{lem:DVR_s_inseparable_structure}), \autoref{lem:derivation_divided_by_u} would give a contradiction.
	
	Let $\Theta$ be the following set of integers:
	$$\Theta:=\{j\in\mathbb{N}_{>0} \  | \  p \nmid j, \exists r\in B^{\times} \mathrm{\ s.t.\ } ru^j\in A[t]\}$$
	$\Theta$ is not empty because the conductor is non-trivial, that is we know that $n^c=Bu^c\subseteq A[t]$.  Furthermore, we see that $d\in\Theta$ for $d\geq c$ whenever $p\nmid d$.  Let $j_0$ be the minimal element of $\Theta$.  
	We claim that:
	\begin{enumerate}[label=(\roman*)]
		\item\label{itm:conductor:decrease_n}
		If $j\in \Theta$ then $j - (a+1) \geq 0$, and  $j-(a+1)\in \Theta$ unless $p|(j-(a+1))$.
		\item\label{itm:conductor:bound}
		$a+1\leq j_0$.
		\item\label{itm:conductor:divisibility}
		$a+1\equiv j_0\mathrm{\ (mod\ }p)$.  In particular, $p\nmid a+1$.
	\end{enumerate}
	Note that \ref{itm:conductor:bound} and \ref{itm:conductor:divisibility} are consequences of \ref{itm:conductor:decrease_n}, so we only need to prove \ref{itm:conductor:decrease_n}. Suppose that $j\in\Theta$ and so we can let $r\in B^{\times}$ be such that $ru^j\in A[t]$.  Let $q(x)\in A[x]$ be such that $ru^j=q(t)$.  Apply $\Delta$ to the equation $ru^j-q(t)=0$, to obtain:
	\begin{equation}
	\label{eq:conductor:derivative_computation}
	0=\Delta(ru^j-q(t))=
	\underbrace{u^j\Delta(r)+rju^{j-1} \Delta(u)-u^aq'(t)}_{\Delta(A)=0, \textrm{ and } \Delta(t) = u^a}
	= u^{j-1}(u \Delta(r) + rj \Delta(u)) - u^a q'(t). 
	\end{equation}
	As all three of $r$, $j$ and $\Delta(u)$ are units, so is $u \Delta(r) + rj \Delta(u)$. Hence, $u^{j-1} \in B u^a$, and therefore $j-1 -a \geq 0$. Applying \autoref{eq:conductor:derivative_computation} again shows that $j-1-a \in \Theta$, unless $p |(j-1-a)$.
	
	\emph{Suppose now that $c<(p-1)a$.}  Then, as $(p-1)a-1\equiv -(a+1)\not\equiv 0\mathrm{\ {mod}\ } p$ according to \ref{itm:conductor:divisibility}, we have $(p-1)a-1\in\Theta$.  We will use property \ref{itm:conductor:decrease_n} to derive a contradiction to property \ref{itm:conductor:bound}. 
	
	First we list some elementary properties of the set of integers $N_m:=(p-1)a-1-m(a+1)$:
	\begin{enumerate}[resume*]
		\item\label{itm:conductor:in_theta}
		$p \nmid N_m$  for any $m\in\{0,...,p-2\}$, and 
		\item\label{itm:conductor:in_range}
		$N_m\in\{1,...,a\}$ for some $m\in \{0,...,p-2\}$.
	\end{enumerate}
	Property \ref{itm:conductor:in_theta} follows immediately from $$N_m\equiv -(m+1)(a+1) \mathrm{\ (mod\ }p),$$ for $p$ does not divide $m+1$ or $a+1$, according to \ref{itm:conductor:divisibility}.
	To see property \ref{itm:conductor:in_range} note first that for any $m$ in this set, either $N_m<0$ or $N_m>0$ because $p\nmid N_m$.  Thus for the particular value $m'$ for which $N_{m'+1}<0$ and $N_{{m'}}=N_{m'+1}+(a+1)>0$ we must have $N_{m'}\leq a$ (there is such an $m'$ with $m' \leq p-2$ because $N_{p-1}=-p$).
	
	To derive the required contradiction, note that property \ref{itm:conductor:in_theta} implies that $N_m\in\Theta$ for all values of $m\in\{0,...,p-2\}$ such that $N_m>0$ by property \ref{itm:conductor:decrease_n}, while property \ref{itm:conductor:in_range} produces a contradiction to property \ref{itm:conductor:bound}.
	
	\noindent { \bfseries Case \autoref{itm:case_not_surjective} of \autoref{lem:DVR_s_inseparable_structure}}: 
	
	As in Case \autoref{itm:case_surjective}, $B$ is a DVR with maximal ideal $(u)$ and the conductor is equal to $B u^c$, where $c$ is the smallest number such that $B u^c \subseteq A[t]$. In Case \autoref{itm:case_not_surjective}, $c$ is also the smallest integer $n$ such that $B^{\times} u^c \subseteq A[t]$, because $u\in A$ by \autoref{lem:DVR_s_inseparable_structure} \autoref{itm:same_generator}.  Fix $r \in B^\times$, and assume that $d_r$ is the smallest integer such that $u^{d_r} r \in A[t]$.  For any $r$, this satisfies $d_r\leq c$.  We may write $u^{d_r} r = \sum_{i=0}^{p-1} a_i t^i$ where $a_i \in A$ and are unique (as $A[t]$ is a free module over $A$ with generators $1,\dots, t^{p-1}$).
	
	Now apply the derivation $\Delta$, $p-1$-times, to get:
	
	\begin{equation*}
	u^{d_r}  \Delta^{p-1}(r) = a_{p-1} (p-1)! u^{a (p-1)}
	\end{equation*}
	
	If $\Delta^{p-1}(r)$ is a unit, we can deduce that $a(p-1)\leq d_r\leq c$, which would finish our proof.  Thus we need only prove the following claim:
	
	\noindent \emph{Claim:}
	There is some element $r \in B^\times$ such that $\Delta^{p-1}(r )$ is a unit.
	
	Let $L= B/n$, $K:=A/m$, and $\phi : B \to L$ be the natural quotient map.  Then we have the diagram:
	\begin{equation*}
	\xymatrix@R=3pt{
		\Der_A (B) \ar[r] &  \Der_A( B, L)   & \ar[l]^{\cong} \Der_K(L) \\
		\Delta \ar@{|->}[r] & \phi \circ \Delta   &  \\
		&  d \circ \phi &  \ar@{_(->}[l] d
	}
	\end{equation*}
	It is immediate from the definition that the right hand map is injective.  To show it is surjective, take some $D\in \Der_A(B,L)$.   $D(u)=0$, because $u\in A$, and so the Leibniz rule implies that $D(uB) = 0$.  Thus any such $D$ does descend to a well defined derivation $d\in \Der(L)$.  $d$ is a $K$ derivation because $K$ is the image of $A$ under $\phi$.
	
	As we assumed that $\Delta/u\notin\Der(B)$, it follows that the element $d \in \Der_K(L)$  with $d\circ\phi=\phi\circ\Delta$ is not zero. 
	Thus to prove the claim, it is enough to show that for any non-zero element $d\in\Der_K(L)$, there is some $s \in L$ such that $d^{p-1}(s) \neq 0$.
	
	We have $L = K[z]$ for some $z \in L$, which exhibits $L \cong K[x]/(x^p - v)$, where $0 \neq v := z^p \in K$.  One non-zero element of  $\Der_K(L)$ is $\frac{\partial}{\partial x}$.  Because the extension is of degree $p$, $\Der_K(L)\cong L$, and so $d = \lambda \frac{\partial }{\partial x}$ for some $\lambda \in L^{\times}$. So, it is enough to find $s \in L$, such that $\left(\frac{\partial}{\partial x}\right)^{p-1}(s) \neq 0$.  But  $x^{p-1}$ is such an element. 
	
\end{proof}

\begin{lemma}
	\label{lem:multiplication_inclusion}
	Let $A$ be an excellent integral ring and $K \to A$ a ring homomorphism from a field. Let $K \hookrightarrow K_1 \hookrightarrow K_2$ be finite field extensions, such that $A \otimes_K K_1$ and $A \otimes_{K} K_2$ are integral.  Set $B$ and $C$ to be the normalizations of $A \otimes_K K_1$ and $A \otimes_K K_2$, respectively. Further introduce the following notation:
	\begin{itemize}
		\item $\sC(C/A)$ is the conductor of $A \otimes_K K_2 \hookrightarrow C$,
		\item $\sC(C/B)$ is the conductor of $B \otimes_{K_1} K_2 \hookrightarrow C$, and
		\item $\sC(B/A)$ is the conductor of $A \otimes_K K_1 \hookrightarrow B$.
	\end{itemize}
	Then we have:
	\begin{equation}
	\label{eq:multiplication_inclusion}
	\sC(C/A)\supset\sC(C/B)\cdot\sC(B/A).
	\end{equation}
	as ideals of $C$.
\end{lemma}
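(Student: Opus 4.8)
The plan is to reduce everything to the standard description of the conductor as
$\sC(S/R) = \{\, s \in S : sS \subseteq R \,\}$, which is exactly the largest ideal of $R$ that is simultaneously an ideal of $S$ of \autoref{def:conductor}, and then to prove the inclusion at the level of elements: I will show that if $x \in \sC(C/B)$ and $y \in \sC(B/A)$ then $xy \in \sC(C/A)$, which immediately forces the ideal generated by such products, namely $\sC(C/B)\cdot\sC(B/A)$, to lie inside $\sC(C/A)$.

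First I would set up the tower of rings. Writing $R_2 := A\otimes_K K_2$ and using the canonical identification $A\otimes_K K_2 \cong (A\otimes_K K_1)\otimes_{K_1} K_2$, the inclusion $A\otimes_K K_1 \hookrightarrow B$ tensored over $K_1$ with the flat (indeed free) $K_1$-algebra $K_2$ yields an inclusion $R_2 \hookrightarrow B\otimes_{K_1}K_2 =: R_1'$. I would then check that $R_1'$ is a domain with fraction field $\Frac(R_2)$ and is integral over $R_2$: it embeds into $\Frac(B)\otimes_{K_1}K_2$, which is the localization $S^{-1}R_2$ at the image $S$ of $(A\otimes_K K_1)\setminus\{0\}$, hence a domain inside $\Frac(R_2)$; and $R_1'$ is finite, so integral, over $R_2$ because $B$ is finite over $A\otimes_K K_1$ (excellence of $A$ guarantees the \Nagataone property needed for all these normalizations and conductors to be as in \autoref{def:conductor}). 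Consequently the normalization $C$ of $R_2$ is also the normalization of $R_1'$, giving the tower $R_2 \subseteq R_1' \subseteq C$ inside the common fraction field; this is precisely the data making $\sC(C/A)$, $\sC(C/B)$ and, via $B\subseteq R_1'\subseteq C$, the factor $\sC(B/A)$ all live inside $C$.

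The heart of the argument is then a two-line computation. From $y \in \sC(B/A)$ we have $yB \subseteq A\otimes_K K_1$, so multiplying out $y(b\otimes\lambda) = (yb)\otimes\lambda$ with $yb \in A\otimes_K K_1$ shows $y\,R_1' \subseteq R_2$. From $x \in \sC(C/B)$ we have $xC \subseteq R_1'$. Combining, for any $z \in C$ we get $(xy)z = y(xz) \in y\,R_1' \subseteq R_2$, hence $(xy)C \subseteq R_2$, i.e. $xy \in \sC(C/A)$. Since $\sC(C/A)$ is an ideal of $C$ containing every such product, it contains the ideal they generate, which proves $\sC(C/A)\supseteq\sC(C/B)\cdot\sC(B/A)$.

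The only genuine obstacle is the bookkeeping in the setup paragraph: one must be certain that $B\otimes_{K_1}K_2$ is an integral domain embedding into $C$ with fraction field $\Frac(A\otimes_K K_2)$, so that $\sC(C/B)$ is defined and all three conductors really sit inside $C$. This rests on the hypothesis that $A\otimes_K K_2$ is a domain together with the flatness of field extensions; once that is in place, the elementwise inclusion above is purely formal.
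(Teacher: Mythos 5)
Your proof is correct and follows essentially the same route as the paper's: both arguments first verify that $B\otimes_{K_1}K_2$ is a domain sitting between $A\otimes_K K_2$ and $C$ inside the common fraction field, and then establish the inclusion elementwise via the same two-step computation (your $y(xz)\in yR_1'\subseteq R_2$ is exactly the paper's $a'(b'c)\in A\otimes_K K_2$, using the identity $a'\bigl(\sum b_i\otimes\lambda_i\bigr)=\sum (a'b_i)\otimes\lambda_i$). No changes needed.
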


\begin{proof}
	Note that the assumptions that $A \otimes_K K_1$ and $A \otimes_{K} K_2$ are integral are equivalent to requiring that $\Frac(A) \otimes_K K_1$ and $\Frac(A) \otimes_{K} K_2$ are the fraction fields of the former two rings. Hence, $B$ is contained in $\Frac(A) \otimes_K K_1$, and then $B \otimes_{K_1} K_2$ in $\Frac(A) \otimes_K K_2$. This shows that $B \otimes_{K_1} K_2$ is integral, and therefore $C$ is also the normalization of the latter ring. 
	
	So, the rings considered here are contained in each other as shown on the following diagram.
	\[
	\begin{tikzcd}
	A \arrow[r,hookrightarrow] & A\otimes_K K_1 \arrow[d,hookrightarrow] \arrow[r,hookrightarrow] & A\otimes_K K_2=(A\otimes_KK_1)\otimes_{K_1}K_2\arrow[d,hookrightarrow] & \\
	& B\arrow[r,hookrightarrow] & B\otimes_{K_1}K_2 \arrow[r,hookrightarrow] & C\\
	\end{tikzcd}
	\]
	Note also that as although $\sC(B/A)$ is not an ideal of $C$ (but of $B$), as $\sC(C/B)$ is an ideal of $C$ so is $\sC(C/B)\cdot\sC(B/A)$.
	Recall that by definition: 
	$$\sC(C/A):=\{a'\in A\otimes_K K_2:a' c\in A\otimes_K K_2 \textrm{ for }\forall c\in C\}$$
	Then note that  $\sC(C/B)\cdot\sC(B/A)$ is actually in $A\otimes_K K_2$.  To see this, let $$b'=\sum b_i\otimes \lambda_i\in\sC(C/B)\subset B\otimes_{K_1}K_2$$ and let $a'\in \sC(B/A)\subset A\otimes_{K}K_1$. By definition of $\sC(B/A)$, we see that $$a'b'=a'\left(\sum b_i\otimes\lambda_i\right)=\sum((a'b_i)\otimes\lambda_i)\in (A\otimes_K K_1)\otimes_{K_1} K_2$$
	as required. Furthermore, if $c\in C$, then $b'c\in B\otimes_{K_1}K_2$, and we just saw that this implies that $a'b'c\in (A\otimes_K K_1)\otimes_{K_1} K_2=A\otimes_K K_2$.  This completes our proof.
	
\end{proof}

\begin{corollary}
	\label{cor:base_change_conductor}
	Let $f : \sX \to T$ be a proper surjective morphism which is either 
	\begin{enumerate}
		\item  a map from a normal variety to another variety over $k$, or
		\item the structure morphism of a normal variety over $T= \Spec K$, where $K$ is a field of positive characteristic. 
	\end{enumerate}
	Let $T' \to S \to T$ be finite base changes such that $\sX \times_T S$ and $\sX \times_T T'$ are integral. Let $\sY$ and $\sZ$ be the normalizations of the latter varieties, with $\tau : \sZ \to \sY$ being the maps between them. Denote by $D_{\sZ/\sX}$, $D_{\sY/\sX}$ and $D_{\sZ/\sY}$ the corresponding conductors. Then, we have $D_{\sZ/\sX} = D_{\sZ/\sY} + \tau^* D_{\sY/\sX}$.
\end{corollary}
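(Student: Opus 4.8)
The asserted identity is an equality of Weil divisors on $\sZ$, so it suffices to compare their multiplicities at each codimension-one point $Q$ of $\sZ$. Since $\phi=\psi\circ\tau:\sZ\to\sX$, $\psi:\sY\to\sX$ and $\tau:\sZ\to\sY$ are finite, $Q$ maps to codimension-one points of $\sX$ and $\sY$, at which these normal varieties are regular. Localising at $Q$, the three maps $\sZ\to\sY\to\sX$ become finite morphisms of DVRs, hence flat by miracle flatness (the sources are reduced of dimension $1$, so Cohen--Macaulay, and the targets are regular). Write $\nu_2:\sZ\to W_2:=\sX\times_T T'$, $\nu_1:\sY\to W_1:=\sX\times_T S$ and $\mu:\sZ\to W_3:=\sY\times_S T'$ for the normalisations (recall $W_3$ is integral, as shown in the proof of \autoref{lem:multiplication_inclusion}). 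Over the regular codimension-one locus of $\sX$ (resp. $\sY$), each $W_i$ is finite flat over $\sX$ (resp. $\sY$) with Gorenstein fibres coming from finite field extensions, hence is Gorenstein there; so $\omega_{W_i}$ is locally free and \autoref{lem:two_conductors} applies, giving, for instance, $K_\sZ+D_{\sZ/\sX}=\nu_2^*K_{W_2}$, and similarly for the other two conductors.

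\textbf{Rewriting the conductors and using transitivity.} Writing $K_{W_2}=g_2^*K_\sX+K_{W_2/\sX}$, where $g_2:W_2\to\sX$ is the (finite flat) base-change map and $K_{W_2/\sX}$ its relative canonical, we obtain $D_{\sZ/\sX}=\nu_2^*K_{W_2/\sX}-K_{\sZ/\sX}$, with $K_{\sZ/\sX}:=K_\sZ-\phi^*K_\sX$; likewise $D_{\sZ/\sY}=\mu^*K_{W_3/\sY}-K_{\sZ/\sY}$ and $D_{\sY/\sX}=\nu_1^*K_{W_1/\sX}-K_{\sY/\sX}$. The chain rule for relative dualizing sheaves of the finite flat composite $\sZ\xrightarrow{\tau}\sY\xrightarrow{\psi}\sX$ gives $K_{\sZ/\sX}=K_{\sZ/\sY}+\tau^*K_{\sY/\sX}$. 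Substituting these three expressions, the $K_{\sZ/\sX}$-terms cancel via this chain rule, and the desired equality $D_{\sZ/\sX}=D_{\sZ/\sY}+\tau^*D_{\sY/\sX}$ reduces to the \emph{base-contribution identity}
\[
\nu_2^*K_{W_2/\sX}=\mu^*K_{W_3/\sY}+\tau^*\nu_1^*K_{W_1/\sX}.
\]

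\textbf{The base contribution.} By compatibility of relative dualizing sheaves with the flat base changes along the Cartesian squares (at codimension one this is the elementary identity $\Hom_A(A\otimes_K K_2,A)\cong A\otimes_K\Hom_K(K_2,K)$), one has $K_{W_2/\sX}=h_2^*K_{T'/T}$, $K_{W_3/\sY}=h_3^*K_{T'/S}$ and $K_{W_1/\sX}=h_1^*K_{S/T}$, where the $h_i$ are the projections to the bases. Pulling back to $\sZ$, all the resulting composites to $T'$ agree with the canonical $T'$-structure $\pi':\sZ\to T'$, and the composite to $S$ is $c\circ\pi'$ with $c:T'\to S$. Thus the base-contribution identity becomes $\pi'^*\big(K_{T'/T}-K_{T'/S}-c^*K_{S/T}\big)=0$, which is exactly the chain rule $K_{T'/T}=K_{T'/S}+c^*K_{S/T}$ for $T'\xrightarrow{c}S\to T$. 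This closes the argument. In case (b), and whenever the base changes are field extensions, every $K_{\text{base}/\text{base}}$ is trivial, so the base contributions vanish and the statement is immediate from the normalisation canonical bundle formula together with the transitivity of $K_{\sZ/\sX}$.

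\textbf{The main obstacle.} The inequality $D_{\sZ/\sX}\le D_{\sZ/\sY}+\tau^*D_{\sY/\sX}$ already follows from the ideal inclusion $\sC(C/A)\supseteq\sC(C/B)\cdot\sC(B/A)$ of \autoref{lem:multiplication_inclusion} (valuations are additive on products in a DVR); the real content is the reverse, that is, equality. The difficulty is that no naive transitivity of normalisation conductors holds for a general tower $R\subseteq M\subseteq C$ with $C$ the common normalisation: for $k[[t^3,t^4,t^5]]\subset k[[t^2,t^3]]\subset k[[t]]$ additivity of the conductor valuations already fails. Hence the proof must genuinely exploit the base-change (tensor) structure, which is precisely what the relative-dualizing computation above does. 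The one hypothesis used beyond \autoref{lem:two_conductors} is flatness of the base changes with Gorenstein fibres, ensuring the $W_i$ are $G_1$; this is automatic for field extensions (case (b)) and holds at the codimension-one points we use in case (a), if necessary after passing to the flat locus of the base change.
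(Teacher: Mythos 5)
Your route is genuinely different from the paper's, and it has a gap at exactly the step that carries the content of the statement. The paper's proof is two lines: \autoref{lem:multiplication_inclusion}, read at codimension-one points, gives the inequality $D_{\sZ/\sX}\le D_{\sZ/\sY}+\tau^*D_{\sY/\sX}$, and the properness of $f$ is then what upgrades this inequality to an equality. You correctly identify the inequality as the easy half, but your argument for the reverse direction proves less than claimed. The identities you manipulate, such as $D_{\sZ/\sX}=\nu_2^*K_{W_2/\sX}-K_{\sZ/\sX}$, $K_{W_2/\sX}=h_2^*K_{T'/T}$ and the chain rule $K_{\sZ/\sX}=K_{\sZ/\sY}+\tau^*K_{\sY/\sX}$, are identities of divisor \emph{classes}: each (relative) canonical divisor is only defined up to the choice of an isomorphism of dualizing sheaves. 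Formal substitution therefore only yields the linear equivalence $D_{\sZ/\sX}\sim D_{\sZ/\sY}+\tau^*D_{\sY/\sX}$, not the asserted equality. To get an equality of honest divisors you must show that the \emph{specific} map whose vanishing divisor is $D_{\sZ/\sX}$ (the dual of the trace of $\nu_2:\sZ\to W_2$, as in \autoref{lem:two_conductors}) agrees, up to units, with the composite of the trace of $\mu:\sZ\to W_3$, the base change along $T'\to S$ of the trace of $\nu_1:\sY\to W_1$, and the canonical isomorphisms identifying the $\omega_{W_i}$ with pullbacks. That is transitivity of trace maps for $\sZ\to W_3\to W_2$ together with their compatibility with flat base change; these facts are true and citable, but they are the real content here and your write-up never invokes them. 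Your own closing paragraph, which rightly observes that naive additivity of conductors fails, is precisely the reason a class-level computation cannot suffice. A telling symptom is that your argument nowhere uses the properness of $f$, on which the paper's proof leans essentially.

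Two further points need repair in case (a). A codimension-one point of $\sZ$ maps to a codimension-one point of $\sX$, but its image in $T$ can have arbitrary codimension, so $T'\to S\to T$ need not be flat or Gorenstein there; ``passing to the flat locus'' does not help, since components of the conductors may lie over the non-flat locus and discarding them changes the divisors being compared. Likewise the Gorenstein property of the fibres of $W_i\to\sX$ is clear over the generic point of $T$ but needs an argument for vertical components. The economical fix is the paper's: your computation, read as a statement about classes, shows the two sides are linearly equivalent effective divisors; \autoref{lem:multiplication_inclusion} shows one dominates the other; and properness of $f$ forces the effective, linearly trivial difference to vanish.
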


\begin{proof}
	As $f$ is proper, it is enough to prove that $D_{\sZ/\sX} \leq D_{\sZ/\sY} + \tau^* D_{\sY/\sX}$, and this is exactly the statement of \autoref{lem:multiplication_inclusion} applied at points of codimension one.
\end{proof}

\begin{theorem}\label{thm:p-1C_conductor}
	
	Using the assumptions and notation of \autoref{thm:foliation}, and if in addition $f$ is proper and $\sX\times_T T'$ is reduced, the natural choice of $(p-1)C$ constructed in \autoref{itm:foliation:unicity} of \autoref{thm:foliation} is equal to the divisorial part of the conductor of $\sZ\to \sX\times_T T'$ over a restricted open set of $T$.
	
\end{theorem}

\begin{proof}

First, \emph{assume that the morphism $T'\to T$ ~(and therefore also $\sZ\to\sX$) is of degree $p$}, and thus $\mathcal{H}$ (and also $\sF$) has rank $1$. Let $E$ be the divisorial part of the  conductor of $\sZ \to \sX \times_T T'$. We shrink $T$ so that both $T$ and $T'$ are regular, $E$ and $C$ have only horizontal components, $f$ is flat and the relative Gorenstein locus of $\sX \to T$ is relatively big (that is, it is big in every fiber). The latter is possible as $\sX$ is normal and hence it has a regular big open set. Using \autoref{lem:two_conductors} along with base-change of $K_{\sX/T}$ for flat, Gorenstein morphisms \cite[Thm 3.6.1]{conrad} and also point \autoref{itm:foliation:effective_local} of the present proof,   we have $E \sim (p-1)C \sim -K_{\sZ/\sX}$. So, as $f$ is proper,  it is enough to prove that $E \geq (p-1)C$. The latter  question is local at the codimension $1$ points.

So, fix a codimension $1$ point $z \in \Supp C$. Then $z$ maps to $\eta$ (the generic point of $T'$).  Let $x \in \sX$ be the image of $z$. Set then $A:= \sO_{\sX,x}$, $B:= \sO_{\sZ,z}$ and $K:=K(T)$. Let $u \in B$ be a generator of the maximal ideal. Since $K(T')$ is a degree $p$ inseparable extension  of $K$, $K(T')= K(t)$ for any $t \in K(T') \setminus K$, and furthermore for all such $t$ we have $t^p \in K$. In particular, there is a unique $\Delta' \in \Der_K K(T')$, such that $\Delta'(t)=1$. As $\Delta' \neq 0$, and  $\dim_{K(T')} \Der_K K(T')=1$, $\Delta'$ generates $\Der_K K(T') \cong\sH_{\eta}$. In particular,  $\Delta'$ also generates $(g^* \sH)_z$. Fix also a generator $\Delta \in \sF_z \subseteq \sT_{\sZ,z} \cong \Der_k(B) = \Der (B)$ (where the latter equality is given by the perfectness of $k$). Then $\Delta(t) = wu^a$ for some integer $a \geq 0$  and $w \in B^\times$. Replacing $\Delta$ by $\Delta/w$ we may assume that $\Delta(t) = u^a$. Now, one can compute the multiplicity of $C$ at $z$ by understanding the equation of the map $\xi : \sF_z \to (g^* \sH)_z$. That is, as both are rank $1$ free $B$-modules, $\xi$ is given (up to multiplication by an element of $B^\times$) by a multiplication by $u^b$, where $b \geq 0$ is an integer.
The multiplicity of $C$ at $z$ is then $b$. \emph{We claim that $a =b$.} Indeed, $\xi(\Delta) = v u^b \Delta'$ for some $v \in B^\times$, as $\Delta$ and $\Delta'$ are generators (as $B$-modules) of $\sF_z$ and $(g^* \sH)_z$, respectively. The following computation shows our claim
\begin{equation*}
u^a = \Delta(t) = \xi(\Delta)(t) = v u^b \Delta'(t) = v u^b .
\end{equation*}
After these initial considerations, applying \autoref{lem:conductor} to the above situation yields exactly that $E \geq (p-1)C$ at $z$.

\emph{Now we prove the general case, i.e. where $\deg \tau > p$}.  In that case, we can decompose $\tau$ into degree $p$ morphisms by repeatedly adjoining elements of $K(T')$ whose $p^{\mathrm{th}}$ power is in $K(T)$, and then taking the normalization of $T$ in the new field.  We must show that both the conductor and foliation divisors obtained by composing two base changes are equal to those obtained by doing it all at once. We show this first for the foliation divisor. We note that since the whole composition has the property that the pullback of the total space is reduced, the same holds for all the intermediate steps. 

Suppose we have a decomposition of a height $1$ $k$-morphism: $$T'\xrightarrow{\phi} S\xrightarrow{\psi} T$$ where $\phi$ has degree $p$.  Let $\sH$ be the foliation on $T'$ corresponding to $\psi\circ\phi$, $\sH_\psi$ the foliation on $S$ corresponding to $\psi$ and $\sH_\phi$ that on $T'$ corresponding to $\phi$.
Let $\sY$ be the normalization of $\sX\times_T S$ and $\sZ$ the normalization  of $\sY\times_S T'$.  By the universal properties, the composition $\sZ\to\sY\to\sX$ is the normalization of $\sX\times_TT'$. Use the names $\alpha: \sZ \to \sY$ and $\beta: \sY \to \sX$ for the induced morphisms. Let $\sF$ be the foliation corresponding to the morphism $\sZ\to \sX$, $\sF_\phi$ be the foliation corresponding to $\sZ\to \sY$ and $\sF_\psi$ that corresponding to $\sY\to \sX$.  By  shrinking the bases small enough and then discarding a closed set of codimension at least two from the total spaces, we may assume the following:
\begin{enumerate}
\item All the vertical morphisms are flat.
\item $\sH_\phi \subseteq \sT_{T'}$, $\sH \subseteq \sT_{T'}$, and $\sH_\beta \subseteq \sT_T$ are subbundles, and then it follows that after pulling back the above subsheaves stay subsheaves.
\item  $\alpha^* \ker \left( \sT_{\sY} \to \beta^* \sT_{\sX} \right) = \ker \left( \alpha^* \sT_{\sY} \to \alpha^*\beta^* \sT_{\sX}\right)$. To obtain the latter it is enough to guarantee that the image of $ \sT_{\sY} \to \beta^* \sT_{\sX}$ is a subbundle. This holds over the locus where both $\sX$ and $\sY$ are regular  by applying \cite[last paragraph of Section 38.a]{matsumura} and \cite[Theorem in Section 3]{kimura_niitsuma} to both $\beta$ and also to the map $\xi : \sX \to \sY$ given by the height $1$ assumption. Indeed, assuming that $\sX$ and $\sY$ are regular,  the above references give that both $\Omega_{\sY/\sX}$ and $\beta^* \Omega_{\sX/\sY}$ are locally free. However, these sheaves fit into a commutative diagram with exact row as follows:
\begin{equation*}
\xymatrix{
& \beta^* \Omega_{\sX/\sY} \ar@{^(->}[d] \\
\beta^* \Omega_{\sX} \ar@{->>}[ur]  \ar[r] & \Omega_{\sY}  \ar[r] & \Omega_{\sY/ \sX} \ar[r] & 0
}
\end{equation*}
Using that all the sheaves in the diagram are locally-free, it follows that the image of $\sT_{\sY} \to \beta^* \sT_{\sX}$ is $\beta^* \sT_{\sX/\sY}=\beta^* \sHom (\Omega_{\sX/\sY},\sO_{\sX})$, which is a subbundle, if so is $\ker (\Omega_{\sX} \to \Omega_{\sX/\sY})$. Now, looking at the other similar diagram
\begin{equation*}
\xymatrix{
& \xi^* \Omega_{\sY/\sX} \ar@{^(->}[d] \\
\xi^* \Omega_{\sY} \ar@{->>}[ur]  \ar[r] & \Omega_{\sX}  \ar[r] & \Omega_{\sX/ \sY} \ar[r] & 0
}
\end{equation*}
shows that this kernel is $\xi^* \Omega_{\sY/\sX}$, which is indeed a subbundle. 
\item $\phi^* \ker \left( \sT_{S} \to \psi^* \sT_{T} \right) = \ker \left( \phi^* \sT_{S} \to \phi^* \psi^* \sT_{T}\right)$. For this, as in the previous point, it is enough to guarantee that the image of $ \sT_{\sY} \to \psi^* \sT_{\sX}$ is a subbundle. However, here we may shrink the bases arbitrarily, so we may assume it without any worries (the issue in the previous point was that we were allowed to discard only a codimension at least $2$ closed set). 
\end{enumerate}
We have a commutative diagram as follows.
\begin{equation*}
\xymatrix{
\sT_{\sZ} \ar[r] \ar[d] & \alpha^*\sT_{\sY}\ar[r]\ar[d] & \alpha^*\beta^* \sT_{\sX} \ar[d] \\
g^* \sT_{T'} \ar[r] & g^*\phi^*\sT_{S}\ar[r]  & g^* \phi^*\psi^* \sT_T \cong \alpha^*\beta^* f^* \sT_T
},
\end{equation*}
For each row, the kernels of the two arrows and the kernel of the composition fit into a short exact sequence. Using then the commutativity of taking kernel and applying pullback shown above we obtain
another commutative diagram with exact rows as follows. 
\begin{equation*}
\xymatrix{
0\ar[r] & \sF_\phi\ar[d]\ar[r] & \sF \ar[r] \ar[d] & \alpha^*\sF_\psi\ar[d] \ar[r] &0  \\
0\ar[r] & g^*\sH_\phi\ar[r] & g^* \sH \ar[r] & g^*\phi^*\sH_\psi  \ar[r] & 0
}.
\end{equation*}

Further restricting the bases we may assume that all vector bundles in the bottom row are trivial as well as the extension. So, after also dualizing, we obtain:
\begin{equation*}
\xymatrix{
0\ar[r] & (\alpha^*\sF_\psi)^*\cong\alpha^*(\sF_\psi^*) \ar[r] & \sF^* \ar[r] &  \sF_\phi^* \ar[r] &0  \\
0\ar[r] & \sO_{T'}^{\log_p \deg \psi} \ar[r] \ar[u] & \sO_{T'}^{1+\log_p \deg \psi}  \ar[u] \ar[r] &  \sO_{T'} \ar[u] \ar[r] & 0
}.
\end{equation*}
Taking determinants yields then the following commutative diagram.
\begin{equation*}
\xymatrix{
\det\sF^* \ar@{}[r]|-\cong & \det\sF_\phi^*\otimes\alpha^*\det\sF_\psi^*. \\
\sO_{T'} \ar[u] \ar@{}[r]|-\cong & \sO_{T'} \otimes \sO_{T'} \ar[u] \\
}
\end{equation*}
where the left arrow is the map $\iota$ giving the $C$ associated to $T' \to T$, and the right one is $\iota_\phi \otimes \alpha^* \iota_\psi$, where $\iota_\phi$ and $\iota_\psi$ give $C_\phi$ and $C_\psi$ associated to $\phi$ and $\psi$, respectively. It follows then that $C = C_\phi + \alpha^* C_\psi$. 
  
The corresponding statement for the conductor is shown in \autoref{cor:base_change_conductor}.

\end{proof}

\begin{remark}
We do not know if the properness assumption in \autoref{thm:p-1C_conductor} is essential or not. As our goal is to apply to proper fibrations, we did not investigate the non-proper case. 
\end{remark}

\begin{proof}[Proof of \autoref{thm:conductor}]
The additional statement about equality with the conductor in the geometrically integral case follows from  \autoref{thm:p-1C_conductor}, together with the compatibility (of the divisorial part) of the conductor with passage to generic fiber and with composition of base-changes. The former compatibility follows from \autoref{lem:conductor} and the latter from \autoref{cor:base_change_conductor}.  Note that the final separable extension produced zero contribution to either the conductor or the foliation divisor, and \autoref{cor:base_change_conductor} implies that the pullback of the conductor through this base change remains the conductor.  
\end{proof}

\begin{proof}[Proof of \autoref{cor:singularities_of_general_fiber}]

Denote the $n^{\mathrm{th}}$ power of the relative Frobenius$/k$ by $F^n:T^{(n)}\to T$.  If we choose $n\gg 0$, then the normalization $\sY$ of $\sX\times_T T^{(n)}$ has geometrically normal generic fiber.  Thus its general fiber is normal by \autoref{prop:fibers}.  The general fibers of $\sX'=\sX\times_T T^{(n)}\to T^{(n)}$ are isomorphic to those of $\sX\to T$ as we work over a perfect field.   Let $t$ be a general point of $T^{(n)}$ and let $\sX'_{t}$ and $\sY_t$ be the fibers of $\sX'/T'$ and $\sY/T'$ respectively.  Let $\pi:\sY\to\sX'$ be the normalization.

 We first show that the coefficient of every component of the conductor of $\sY\to \sX'$ which dominates $T$ is divisible by $p-1$.  Let $\sX_0=\sX$, and define $\sX_i$ to be the normalization of $\sX_{i-1}\times_{T^{(i-1)}} T^{(i)}$.  It follows from the universal properties of product and normalization that $\sX_n=\sY$.
Now by \autoref{thm:foliation} applied to $T^{(i)}\to T^{(i-1)}$ for each $i$, we see that the horizontal coefficients the conductor are divisible by $p-1$.

Next we claim that the normalization of a general fiber of $\sX\to T$ is isomorphic to a general fiber of $\sY\to T^{(n)}$.   First note that a general fiber of $\sX/T$ is isomorphic to the corresponding fiber of $\sX'/T^{(n)}$ because we work over a perfect field.  The property of geometric normality is open on the base \cite[12.2.4]{EGA_IV_III}, so a general fiber of $\sY\to T^{(n)}$ is (geometrically) normal.  This makes it the normalization of the corresponding fiber of $\sX\to T$ as it is a normal variety which comes with a finite birational map to the fiber of $\sX'\to T^{(n)}$.

Finally, we claim that the conductor of the normalization of the general fiber is equal to the restriction of the conductor of $\sY\to \sX'$.  
As $\sX$ is normal we may shrink $\sX$ by removing a subvariety of codimension $2$ to ensure that $\sX$ is regular and so $\omega_{\sX}$ is a line bundle.   $T^{(n)}\to T$ is flat and so $\omega_{\sX'/T^{(n)}}\cong\phi^*\omega_{\sX/T}$.   We may further remove a codimension $2$ subvariety of $\sX$ and $\sY$ in order to assume that $\sY$ is regular.  The result of these restrictions is that we have removed a codimension at most $2$ part of the general fibers, which we are allowed to do.  

If we now restrict to any affine subset of our set-up, we saw in \autoref{lem:two_conductors} that the conductor ideal is equal to the image of the trace map $\zeta:\mathrm{Hom}_R(S,R)\to S$.  The trace map is compatible with restriction to general fibers by \cite[Lemma 2.18]{psz}, and so we deduce that the global conductor restricts to the conductor on the general fibers.

\end{proof}

\section{Geometric consequences}
\label{sec:geom_conseq}

We first apply the theorem to our statement about singularities in fibrations of arbitrary dimensions.

\begin{proof}[Proof of \autoref{cor:general_dimension}]

First recall from \autoref{prop:fibers} that a general fiber of a morphism is normal (resp. smooth) if and only if the geometric generic fiber is normal (resp. smooth).  Therefore by \autoref{prop:fibers} and \autoref{lem:fibre_normalisation} we may replace ``general'' with ``geometric generic'' in the statement of \autoref{cor:general_dimension}.

First let us deal with the case $K_X \equiv 0$.  Let $X$ be the generic fiber of $f$, defined over the field $K=K(T)$. $X$ is normal because $\sX$ is normal.  Let $\phi:Y\to X$ be the normalization of the reduced subscheme of the base change by $\overline{K}/K$, and suppose that $Y$ is smooth in order to obtain a bound on $p$.   By  \autoref{thm:generic}, $K_Y+(p-1)C\equiv 0$ for some non-zero effective Weil divisor $C$.  We know that $C$ is non-zero by \cite[4.2]{tanaka_behaviour}, because $K(T)$ is algebraically closed in $K(X)$.  By the smoothness of $Y$, $C$ is Cartier.  Fix a general curve $\Gamma$ which has positive intersection with $C$.  By applying bend-and-break in the form of \cite[1.13]{kollar_mori} there is a rational curve $\Gamma'$ which satisfies 
$$0<-K_{Y}\cdot\Gamma'=(p-1)C\cdot\Gamma'\leq\dim(X)+1.$$
But since $C$ is Cartier, $C\cdot \Gamma'$ is an integer and so we obtain the bound $p\leq\dim(X)+2$.

Now suppose that $\rho(\sX/T)=1$ and $-K_X$ is ample over $T$.   Let $\phi:\sY\to \sX$ be the normalized base change of $f:\sX\to T$ by Frobenius.  By \autoref{lem:base_change_rho}, $\rho(Y/T)=1$.  By \autoref{thm:foliation} we have the formula
$$K_{\sY}+(p-1)\sC\sim_{\bQ} \phi^*K_{\sX}$$
where $\sC$ is a $\bQ$-Cartier divisor which is ample over $T$, because it is effective and $\rho(Y/T)=1$.
Now restricting to the generic fibres $Y$ and $X$ over $K=K(T)$, we find that 
 $$K_Y+(p-1)C=\phi^*K_X$$ for some ample Cartier divisor $C$.  

 Now, let us pass to $\overline{K}$ to be able to apply bend and break with bounds on anti-canonical degrees. Denote the corresponding maps, divisors and varieties by tilde. That is, $\widetilde{Y}:=Y_{\overline{K}}$, $\widetilde{C}:=C_{\overline{K}}$ and $\widetilde{\phi} : \widetilde{Y} \to X$ is the induced morphism.  Note that $\widetilde{C}$ is also an ample $\mathbb{Z}$-divisor. Then 
we can choose some curve $\Gamma$ on some component of $\widetilde{Y}$ with 
\begin{equation*}
K_{\widetilde{Y}} \cdot \Gamma = \left((p-1)\widetilde{C}-\widetilde{\phi}^*K_X\right)\cdot\Gamma \geq (p-1)\widetilde{C}\cdot\Gamma>0, 
\end{equation*}
 and hence by bend and break obtain $\Gamma'$ with $$-(\dim(X)+1)\leq K_{\widetilde{Y}}\cdot\Gamma'=\left(\widetilde{\phi}^*K_X-(p-1)\widetilde{C} \right)\cdot\Gamma'< -(p-1)\widetilde{C}\cdot\Gamma'<0.$$  From this we obtain a bound on $p$ as before, but this time $p\leq \dim(X)+1$ because $\widetilde{\phi}^*K_X\cdot\Gamma'<0$.

\end{proof}

Next we prove the statements about del Pezzo surfaces:

\begin{theorem}[\autoref{cor:normality_short}]\label{cor:normality}
	Let $X$ be a normal, projective and Gorenstein surface over a field $k$ of characteristic $p>0$, with $k$ algebraically closed in $K(X)$, and such that $-K_{X}$ is ample.  If $p>3$ then $X$ is geometrically normal.
	
	Furthermore, if $Y$ is the normalization of  the reduced subscheme of $X_{\overline{k}}$, and $C$ is an integral divisor such that $(p-1)C\lin -K_{Y/X}$, then $(p,Y,C)$ is one of the following:
	
	\begin{itemize}
		\item $(3,\PP^2,L)$
		\item $(3,S_d, F)$ for $d\geq 2$
		
		\item $(2,\PP^2,L)$
		\item $(2,\PP^2,C\in|2L|)$
		\item $(2,\mathbb{P}^1\times\mathbb{P}^1, C\in |F_1+F_2|)$
		\item $(2,\mathbb{P}^1\times\mathbb{P}^1, F_i)$
		\item $(2,H_d,C\in |D+F|)$ for $d\geq 1$
		\item $(2,H_d,D)$ for $d\geq 1$
		\item $(2,S_2,C)$, where $C_{H_2}\in |D+2F|$ where $C_{H_2}$ is the birational transform of $C$
		\item $(2,S_d, 2F)$ for $d\geq 2$
	\end{itemize}
	
	Here we denote the Hirzebruch surface of degree $d$ by $H_d$, its fiber and exceptional section by $F$ and $D$, and the surface obtained by contracting $D$ by $S_d$.
	
\end{theorem}

\begin{proof}[Proof of \autoref{cor:normality}]
Let $X$ be as in \autoref{cor:normality}, except that we enlarge $k$ to assume that it is algebraically closed in $K(X)$.  Assume $X$ is not geometrically normal.  Let $Y$ be the normalization of an irreducible component of the reduced subscheme of $X\otimes_k\overline{k}$.  Let $\pi:Y\to X$ be the  natural morphism. \autoref{thm:generic} implies that there is an 
 effective Weil divisor $C$ such that $K_Y+(p-1)C\lin \pi^*K_X$ (as Weil divisors).  As $X$ is not geometrically normal, by factoring $\overline{k}/k$ via $k^{\frac{1}{p^\infty}}/k$ and applying \cite[4.2]{tanaka_behaviour} we obtain that $C\neq 0$. Let $g:Y'\to Y$ be the 
minimal resolution of $Y$, and let $g^*(K_Y+(p-1)C)=K_{Y'}+(p-1)C_{Y'}+\Delta$ where $C_{Y'}$ is the birational transform of $C$ and $\Delta\geq 0$ is integral (because $X$ is assumed to be Gorenstein).  

In \cite[1.2]{reid_del_pezzo}, Reid gives the following short argument which shows that $Y'$ has the structure of a $K_{Y'}$-Mori fiber space.  Let $L\lin_\Q -(K_{Y'}+(p-1)C_{Y'}+\Delta)$ be a representative such that the pair $(Y', L)$ is klt, which exists because $L$ is nef and big.  There is an extremal ray $R$ which is negative for $K_{Y'}+L\lin_\Q -(p-1)C_{Y'}-\Delta$ by the cone theorem, because $C_{Y'}\neq 0$.  As $L$ is nef, $R$ is also $K_{Y'}$-negative.  $R$ contains some rational curve $\Gamma$, but we claim this cannot be a $(-1)$-curve and so $R$ gives a Mori fiber space structure.  For suppose $\Gamma$ is a $(-1)$-curve.  Then $\Gamma$ cannot be contracted over $Y$ by definition of the minimal resolution, and so by the projection formula $\Gamma\cdot L>0$.  But this is an integer, which contradicts $(K_{Y'}+L)\cdot \Gamma<0$.   
Thus $Y'$ has a Mori fiber structure, and so either satisfies $\rho(Y')=1$ and therefore is $\mathbb{P}^2$, or it has $\rho(Y')=2$ and is a $\mathbb{P}^1$ fibration.  
Assuming we are in the latter case, we show that the base $B$ is $\mathbb{P}^1$, in which case $Y'$ is either $\mathbb{P}^1\times\mathbb{P}^1$, or $H_d$.
As $\rho(Y')=2$, $NS(Y')$ is generated by a general fiber $F$ and any multisection.  As $-(K_{Y'}+(p-1)C+\Delta)$ is big and nef, $$0<-(K_{Y'}+(p-1)C+\Delta)\cdot F=(K_{Y'}+F+(p-1)C+\Delta)\cdot F$$
$$=\deg(K_F)+((p-1)C+\Delta)\cdot F=-2+((p-1)C+\Delta)\cdot F$$
This implies that $(p-1)C+\Delta$ contains at most one horizontal component, and that has degree $1$ over the base.
Suppose $(p-1)C+\Delta=D+V$ where $D$ is integral of degree $1$ over the base and $V$ is vertical.
Then $0\leq(K_X+D+V)\cdot D=K_{\tilde{D}} +V\cdot D$.  This implies $\deg(K_{\tilde{D}})<0$, which implies that $\tilde{D}$ (the normalization of $D$), and therefore the base, is rational.
Suppose on the other hand that $(p-1)C+\Delta=V$ is vertical.  As $\rho(Y'/B)=1$ ,we can replace $V$ up to numerical equivalence by a $\mathbb{Q}$-divisor to assume that $(Y',V)$ is klt.  For $\epsilon$ sufficiently small, $-(K_{Y'}+(1-\epsilon V))$ is ample, and so there is a horizontal $(K_{Y'}+(1-\epsilon)V)$-negative extremal ray.  By the cone theorem this must contain a rational curve.

The presence of $p$ further limits the possibilities, which we now work out.  We find all possibilities for effective Cartier divisors $C_{Y'}\neq 0$ and $\Delta$ on $Y'$ which are consistent with the above construction.

If $Y'=\PP^2$ then  $(p, Y, C)$ is one of the following, where $L$ is a  line:

\begin{itemize}
\item{($2$, $\PP^2$, $L$)}
\item{($2$, $\PP^2$, $C\in|2L|$)}
\item{($3$, $\PP^2$, $L$)}
\end{itemize}

For $Y'=\mathbb{P}^1\times\mathbb{P}^1$, if we label fibers of the two projections by $F_1$ and $F_2$, the possibilities for $(p,Y,C)$ are:
\begin{itemize}
\item $(2,\mathbb{P}^1\times\mathbb{P}^1, C\in |F_1+F_2|)$
\item $(2,\mathbb{P}^1\times\mathbb{P}^1, F_i)$
\end{itemize}

Lastly suppose $Y'=H_d$.  $\Pic(H_d)$ is generated by the exceptional section $D$ and fiber $F$, which satisfy $D^2=-d$, $D\cdot F=1$ and $F^2=0$.  As $\Pic(H_d)$ is 2-dimensional, there are exactly $2$ extremal rays corresponding to $H_d\to S_d$ and the Mori fiber contraction.  Thus $Y$ is either $H_d$ or $S_d$.  A divisor on $H_d$ is nef if and only if its class can be written as $AD+BF$ where $A\geq 0$ and $B\geq Ad$. In addition, if we want $AD+BF$ to be big, we need $A>0$ (by intersecting with a fiber).  We may write $K_{Y'}= -2D-(d+2)F$ and suppose $C_{Y'}\in |aD+bF|$ and $\Delta=a'D$.  Note that if $a'>0$ then $D$ is contracted over $Y$ and so $Y$ is $S_d$.  Our task is now to find all $a, b, a'$ and $p$ which satisfy:
 
\begin{equation}
\label{eq:one}
2-(p-1)a-a'>0 
\end{equation}
\begin{equation}
\label{eq:two}
d+2-(p-1)b\geq d(2-(p-1)a-a')
\end{equation}
Equality holds in \autoref{eq:two} if and only if $(K_{Y'}+(p-1)C_{Y'}+\Delta)\cdot D=0$, which happens if and only if $Y=S_d$.  Also, if $d=1$ then we must have $Y=H_1$, as $S_1\cong\mathbb{P}^2$ which is its own minimal resolution.

The restriction $2-(p-1)a-a'>0$ allows us to split into the following three cases:

\begin{enumerate}
 \item When  $a=a'=0$, the second inequality becomes $2-(p-1)b\geq d$.  The only way to satisfy this (with $b\neq 0$) is by setting $(p,a,b,a',d)=(2,0,1,0,1)$.  But these numbers give equality in the second inequality and $d=1$, and this combination is not allowed as discussed above. 

\item When $a=1$, $a'=0$, $p=2$, the second inequality becomes $2-b\geq 0$ and so $b\in\{0,1,2\}$.

\begin{enumerate}
 \item 
If $b=2$, equality holds and so  $Y=S_d$.  For $d>2$, $S_d$ is not canonical and so in these cases we would have $a'>0$. Therefore the only case which arises here is $S_2$.  This gives:

\begin{itemize}
\item $(2, S_2,C)$ where $C_{Y'}\in |D+2F|$.
\end{itemize}  

\item When $b=1$ or $0$ we get:

\begin{itemize}
\item $(2,H_d,C)$ for $d\geq 1$ where $C\in |D+F|$.
\item $(2,H_d,D)$ for $d\geq 1$.
\end{itemize}
\end{enumerate}

\item When $a=0$, $a'=1$, $D$ is contracted over $Y$, because $a'=1$.  The second inequality gives $(p-1)b\leq 2$, but here we must have equality. This gives the possibilities:
\begin{itemize}
\item $(3,S_d, F)$ for $d\geq 2$.
\item $(2,S_d, 2F)$ for $d\geq 2$.
\end{itemize}
\end{enumerate}

\end{proof}

Finally we come to the corollaries which are immediate consequences of our other results:

\begin{proof}[Proof of \autoref{cor:generic_smoothness}]
As stated in the introduction, this follows immediately from Corollary \ref{cor:normality} via the argument in the proof of \cite[5.1]{hirokado}.  Note that while Hirokado classifies the singularities which occur on the general fibre of a fibration of relative dimension $2$ under the additional assumption that the base is a curve, Schr\"oer extends this to arbitrary base in \cite{schroer}.  In particular, it is shown in \cite{schroer} that the geometric generic fiber has only rational double points as singularities and that these satisfy the same restrictions on their dual graphs as stated in \cite{hirokado} and used in the proof of \cite[5.1]{hirokado}.
\end{proof}

\begin{proof}[Proof of \autoref{cor:separably_rationally_connected}]
This follows from applying \cite[4.8]{rational_connectedness_GFR} to a resolution of the base change to the algebraic closure of $k$.
\end{proof}

\begin{proof}[Proof of \autoref{cor:quasi-elliptic}]
This is a special case of \autoref{cor:general_dimension} because a curve is smooth if and only if it is geometrically normal, and a smooth curve with $K_C\num 0$ has genus $1$.
\end{proof}

\section{Kodaira vanishing}\label{sec:irregularity}

First we prove \autoref{thm:generic_kodaira}, which implies Kodaira vanishing on Gorenstein del Pezzo surfaces of characteristic $p\geq 5$.  This proof does not rely on our earlier results.

\begin{proof}[Proof of \autoref{thm:generic_kodaira}]

Suppose that $H^1(X,\sO_X(-L))\neq 0$.  We derive a bound on $p$ depending on the dimension of $X$.

By Grothendieck duality $H^1(X,\sO_X(-mL))=H^{-1}(X,\omega_{X/k}^\bullet \otimes \sO_X(mL))$. As $X$ is $S_2$, using \cite[Prop 3.3.(6)]{patakfalvi_base_change} we obtain that the cohomology sheaves $\sH^i(\omega_{X/k}^\bullet)$ vanish for $i>-2$. Hence,  Serre vanishing yields that $H^{-1}(X,\omega_{X/k}^\bullet \otimes \sO_X(mL))=0$ for $m$ sufficiently large, which in turn implies that $H^1(X,\sO_X(-mL))=0$.  As we assume that $H^1(X,\sO_X(-L))\neq 0$, this yields some $m$ such that the Frobenius action $\Fr^*:H^1(X,\sO_X(-mL))\to H^1(X,\sO_X(-pmL))$ on the fpqc sheaves has a non trivial kernel.  Given this, \autoref{thm:ekedahl_cover} produces a purely inseparable cover $f:Z\to X$ of degree $p$ from a $G_1$, $S_2$  variety $Z$ satisfying
$$K_Z=f^*(K_X-m(p-1)L).$$

Let $\pi:Z^\nu\to Z$ be the normalization of $Z$. 
Let $C_1$ be the conductor on $Z^{\nu}$. 
 Then $K_{Z^\nu}+C_1\lin\pi^*K_Z$ 
 by \autoref{lem:two_conductors} and using that $Z$ is $G_1$ and $S_2$. 
 Let $Y$ be the normalization of an irreducible component of the reduced locus of $Z^\nu\otimes_{k}\bar{k}$, with morphism $g:Y\to Z^\nu$. According to \autoref{thm:generic}, there is an effective Weil divisor $C_2$ such that 
$$K_{Y}+C_2\lin g^*K_{Z^\nu}.$$
Letting $\Delta=g^*C_1+C_2$ and  $g\circ \pi\circ f :=h$, this gives:
\begin{equation}
\label{eq:generic_kodaira:Cartier}
K_Y+\Delta\lin h^*(K_X-m(p-1)L), 
\end{equation}
where $Y$ is a normal variety over the algebraically closed field $\bar{k}$, and $K_Y+\Delta$ is a $\bQ$-Cartier $\bZ$-divisor.
Let $C$ be  a general complete intersection curve on $Y$. 
By its generality, $C$ is contained in the smooth locus of $Y$, and is not contained in $\Supp\Delta$.  We may fix a point $c\in C$ which is not contained in $\Supp\Delta$.  By bend and break \cite[p 143, Thm 5.8]{kollar_rational} $C$ can be deformed to find an irreducible curve $\Gamma$ containing $c$ satisfying
\begin{multline*}
m(p-1)h^*L\cdot \Gamma \leq -h^*(K_X-m(p-1)L)\cdot\Gamma = -(K_Y+\Delta)\cdot\Gamma \\
\leq 2\dim(X) \frac{-(K_Y+\Delta)\cdot C}{-K_Y\cdot C}\leq 2\dim(X)\frac{-K_Y\cdot C}{-K_Y\cdot C}=2\dim(X) .
\end{multline*}
But $L$ and hence also $h^*L$ are ample Cartier divisors, so  $h^*L\cdot\Gamma$ is a positive integer.  Thus the inequality implies that $m(p-1)\leq 2\dim(X)$ (resp.  $m(p-1)< 2\dim(X)$ if $-K_X$ is ample). That is, $p\leq \frac{2\dim(X)}{m}+1\leq 2\dim(X)+1$ (resp.  $p< 2\dim(X)+1$ if $-K_X$ is ample). 

\end{proof}

In the next two propositions, we prove slightly more general versions of \autoref{cor:kodaira_gorenstein}.

\begin{proposition}\label{thm:kodaira_technical}
	Let $f : X \to T$ be a morphism from a projective, normal and Gorenstein $3$-fold with isolated singularities, to a smooth curve over a perfect field $k$  of characteristic $p\geq 5$, such that $f_* \sO_X \cong \sO_T$, $-K_X$ is $f$-ample,  and for all closed points $t \in T$:
	\begin{enumerate}
		\item $X_t$ is irreducible,
		\item $\sO_X\left( \left(-X_t\right)_{\red} \right)$ is Cartier. 
	\end{enumerate}
	If   $L$ is an ample Cartier divisor on $X$, then $H^2(X,\sO_X(K_X+L))=0$. If furthermore $p \geq 11$, then as $k$-vector spaces:
	$$H^1(X,\sO_X(K_X+L))=\bigoplus_{\parbox{75pt}{\begin{center}\tiny $t \in T$ closed point, \\[1pt]
$\left(X_t \right)_{\red}$ is non-normal, \\[1pt]
$H^1\left( \left. X_t, - L \right|_{X_t} \right) \neq 0$\end{center}}} \left(R^1 f_* \sO_X (K_X +L)\right)_t,$$
where all the summands are Artinain and non-zero.  
\end{proposition}

\begin{proof}[Proof of \autoref{thm:kodaira_technical}]
We use the assumptions on $p \geq 5$ in Step 1,  Step 3 and Step 4 and the assumption $p \geq 11$ only in Step 6 and Step 7. As the $H^2(X, K_X +L)=0$ statement is shown in Step 4, our assumptions on $p$ are valid. 

	{\bf Step 0:} \emph{ For each $i$ and each quasi-coherent sheaf $\sF$ on $X$ over $T$ we have an exact sequence:}
	\begin{equation}
	\label{eq:Leray_degenerates}
	\xymatrix{
		0 \ar[r] & H^{1}(T,R^{i-1} f_* \sF)  \ar[r] & H^i(X, \sF) \ar[r] & H^0(T,R^i f_* \sF) \ar[r] & 0.
	}
	\end{equation}
	Indeed, let $E_{\tilde{p},q}^2=R^{\tilde{p}}\Gamma(R^qf_*\sF)$ be the $({\tilde{p}},q)$ entry of the Leray spectral sequence. 
	As $T$ is a curve,  $E^2_{{\tilde{p}},0}=0$ for ${\tilde{p}}\geq 2$.  
	However, the differential $d^r_{\tilde{p},q}$ (on the $r$-th page, for $r \geq 2$) goes $E^r_{\tilde{p},q} \to E^{\tilde{p}+r, q +r-1}$. Hence, using that it exists only for $r \geq 2$, $d^r_{\tilde{p},q}=0$ always, so the spectral sequence degenerates on the $E^2$-page.

	{\bf Step 1.} \emph{Fix a closed point $t \in T$. Set $G:=(X_t)_{\red}$. In this step we show that for any integer $j>0$: $H^0\left(jG, -L|_{jG}\right)=0$, where $jG$ denotes the natural closed subscheme associated to the divisor $jG$. At the same time, we also show that if $G$ is normal, then $H^1\left(jG, -L|_{jG}\right)=0$ also holds.}
	
By our  assumption, $G$ is Cartier. Hence, for each integer $j \geq 2$  we have an exact sequence
\begin{equation*}
\xymatrix{
0 \ar[r] & \sO_G(-(j-1)G -L) \ar[r] & \sO_{jG}(-L) \ar[r] & \sO_{(j-1)G}(-L) \ar[r] & 0, 
}
\end{equation*}
inducing another exact sequence for $i=0$ or $1$:
\begin{equation*}
\xymatrix{
H^i(G, \sO_G(-(j-1)G -L)) \ar[r] & H^i\left(G,\sO_{jG}(-L)\right) \ar[r] & H^i\left(G, \sO_{(j-1)G}(-L)\right). 
}
\end{equation*}
Hence, to conclude this step, by applying the latter exact sequence inductively, it is enough to show that $H^i(G, \sO_G( -jG -L))=0$ for all $j\geq 0$ and for $i=0,1$, where in the $i=1$ case we also assume that $G$ is normal. In the next paragraph we prove the this latter claim:

As $G$ is irreducible, $G|_G \equiv 0$, and  hence $jG +L|_G$ is ample for $j \geq 0$. Hence, we obtain the required vanishing for $i=0$ by the properites of ample divisors on reduced varieties, and for $i=1$ from \autoref{thm:generic_kodaira}.

{\bf Step 2.} \emph{In this step we show that  $R^2f_*\sO_X(K_X+L)=0$.}

Fix a closed point $t \in T$. As $X_t$ is irreducible and Gorenstein, $X_t=iG$ for some integer $i>0$. Hence, Step 1 tells us that $H^0\left(X_t, -L|_{X_t}\right)=0$. So, by
Serre duality 
\begin{equation*}
0=H^2\left(X_t, \sO_{X_t}\left(K_{X_t} +L|_{X_t} \right) \right)=H^2\left(X_t, \sO_{X_t}\left( K_X +L|_{X_t} \right) \right).
\end{equation*}
Finally,  cohomology and base change concludes this step.	
	
	{\bf Step 3:}  \emph{Here we show that $R^1f_*\sO_X(K_X+L)$ is supported exactly on the finitely many closed points for which $(X_t)_{\red}$ is not normal and $H^1(X_t, K_{X_t} + L|_{X_t}) \neq 0$.}
		
Cohomology and base-change and step 2 tells us that the base-change map 
\begin{equation*}
k(t) \otimes R^1f_*\sO_X(K_X+L) \to H^1(X_t, K_{X_t} + L|_{X_t})
\end{equation*}
is an isomorphism. Then, Step 1 tells us that the latter group is zero whenever $(X_t)_{\red}$ is normal. Furthermore,  \autoref{cor:normality_short} tells us that the general fibers of $f$ are normal.

{\bf Step 4:} \emph{We conclude the proof of $H^2(X, K_X +L)=0$}:
	 
According to \autoref{eq:Leray_degenerates}, it is enough to show that  $H^0\left(T,R^2f_*\sO_X(K_X+L)\right) =H^1\left(T,R^1f_*\sO_X(K_X+L)\right)=0$. The former follows from Step 2, and the latter from Step 3.

{\bf Step 5.} { \emph Reduction to $ H^{1}(T, f_* \sO_X(K_X+L)) =0$.}
	
According to \autoref{eq:Leray_degenerates}, if we showed $ H^{1}(T, f_* \sO_X(K_X+L)) =0$, then we would have
\begin{equation*}
H^1(X, K_X +L) \cong H^0\left(T,R^1 f_* \sO_X(K_X +L)\right)  \cong
\bigoplus_{\parbox{75pt}{\begin{center}\tiny $t \in T$ closed point, \\[1pt]
$\left(X_t \right)_{\red}$ is non-normal, \\[1pt]
$H^1\left( \left. X_t, - L \right|_{X_t} \right) \neq 0$\end{center}}} \left(R^1 f_* \sO_X (K_X +L)\right)_t,
\end{equation*}
where we used Step 3 for the last isomorphism, and where all $\left(R^1 f_* \sO_X (K_X +L)\right)_t$ are Artinian and non-zero.

{\bf Step 6.}  \emph{Here we prove that $f_* \sO_X(K_{X/T} + L)$ is an ample vector bundle.} 

For this step we may assume that $k$ is algebraically closed by base-extending to $\overline{k}$ as ampleness (and all the other properties we have) are invariant under separable base-extensions.

We also note that $f_* \sO_X(K_{X/T} + L)$ is torsion-free as it is the pushforward of a torsion-free sheaf. As we work over a smooth curve this means that $f_* \sO_X(K_{X/T} + L)$ is indeed a vector bundle. Also, from this step we use the assumption $p \geq 11$, which means by \autoref{cor:generic_smoothness} that the general fibers of $f$ are smooth. 

Below we prove that $f_* \sO_X(K_{X/T} + L)$ is ample. The main idea is to use \cite[Proposition 3.6]{patakfalvi_semipositivity}. Using this proposition directly yields only nefness of $f_* \sO_X(K_{X/T} + L)$. To turn this into a proof of ampleness one needs to perform a little perturbation, which boils down eventually to the following base-change:

Choose an effective ample $\Q$-divisor $A$ on $T$ such that
\begin{itemize}
	\item $N:=L-f^*A$ is ample, 
	\item the denominators of $A$ are not divisible by $p$, and 
	\item $\Supp A$ does not contain points over which $f$ has singular fibers. 
\end{itemize}
Let $\pi:S\to T$ be a Galois cover by a smooth projective curve such that $\pi^*A$ is a $\mathbb{Z}$-divisor, and  $\pi$ is not ramified over the closed points $t \in T$ for which $X_t$ is singular. Define $Y:=X\times_T S$, and let $g:Y\to S$ and $\tau:Y\to X$ be the natural morphisms.

We claim that we may apply  \cite[Proposition 3.6]{patakfalvi_semipositivity} to $g$ and $\tau^* N$. Indeed:

\begin{itemize}
\item $\tau^* N$ is Cartier as so is $\pi^* A$.

 \item $\tau^*N $ is nef and $g$-ample as  as so is $N$. 
\item As $f$ is relatively Gorenstein, so is $g$, and hence $Y$ is Gorenstein too. Furthermore, then $K_Y= K_{Y/S} + g^* K_S = \tau^* K_{X/T} + g^* K_S$. In particular, $-K_Y$ is ample$/S$, and the index of $Y$ (which is $1$) is prime-to-$p$.
	\item $Y$ is normal, because over the non-normal fibers of $f$,  $\tau$ is \'etale.

	\item  The general fiber  of $g$ is a smooth del Pezzo surface, so  it is globally $F$-split by \cite[Example 5.5]{hara_characterisation} as the characteristic is greater than $7$.
	\item $H^0(Y_s,\sO(K_{Y/S}+ \tau^* N)|_{Y_s})=S^0(Y_s,\sO(K_{Y/S}+\tau^* N)|_{Y_s})$ for  general $s\in S$, using that the general fiber is globally $F$-split. 
	\item $g$ is flat as $S$ is a smooth curve
	\item \cite[Proposition 3.6]{patakfalvi_semipositivity} is unfortunately stated requiring that $R^ig_*\sO(K_{Y/S}+ \tau^* N)=0$ for $i>0$. However, in the proof of \cite[Proposition 3.6]{patakfalvi_semipositivity} only the following two weaker conditions are used. First, to obtain the necessary base-change properties at fibers over general points, by cohomology and base-change \cite[Cor III.12.9]{hartshorne_ag}, it is enough to chose in the proof the general point from the locus over which all degrees of cohomologies are constant. 
	Second, to guarantee the connection between fiber powers and the original family, it should be true that if $\sM$ is a line bundle on $Y$,  $g^{(n)}: Y^{(n)} \to S$ is the morphism from the $n$-times fiber product with $\sM^{(n)}$ being the line bundle induced on $Y^{(n)}$,  then $g^{(n)}_* \sM^{(n)} \cong \bigotimes_n g_* \sM$ should hold. However, it turns out that for this statement no vanishing is needed, flatness is enough as shown in \cite[Lem 3.6]{kovacs_patakfalvi}.
\end{itemize}
Summarizing, \cite[Proposition 3.6]{patakfalvi_semipositivity} applies, and it yields that $g_* \sO_Y(K_{Y/S} + \tau^* N)$ is nef. 
Therefore, $g_* \sO_Y(K_{Y/S} + \tau^* L) \cong \sO_S(\tau^* A) \otimes g_* \sO_Y(K_{Y/S} + \tau^* N)$ is ample. However,  flat base-change (\cite[III.9.3]{hartshorne_ag}) tells us that
\begin{equation*}
g_* \sO_Y(K_{Y/S} + \tau^* L) \cong \tau^* f_* \sO_X (K_{X/T} + L).
\end{equation*}
This then implies that  $f_* \sO_X (K_{X/T} + L)$ is ample too using  \cite[6.1.2-6.1.8]{lazarsfeld2}.

{\bf Step 7.} \emph{Concluding the proof.} 

According to Step 5, we only need to show that $H^1(T,f_*\sO_X(K_X+L))=0$:
\begin{multline*}
H^1(T, f_* (\omega_{X} \otimes \sO_X(L)))
\cong
\underbrace{H^1(T, \omega_T \otimes  f_* (\omega_{X/T} \otimes \sO_X(L)))}_{\omega_X \cong \omega_{X/T} \otimes f^* \omega_T \textrm{, and the projection formula}}
\\ \cong
\underbrace{\Hom_T(\omega_T \otimes  f_* (\omega_{X/T} \otimes \sO_X(L)), \omega_T)}_{\textrm{Serre duality on } T}
\cong
\Hom_T( f_* (\omega_{X/T} \otimes \sO_X(L)), \sO_T)
=
\underbrace{0}_{\textrm{Step 2}}
\end{multline*}

\end{proof}

\begin{proof}[Proof of \autoref{cor:kodaira_gorenstein}]
	The assumptions of \autoref{thm:kodaira_technical} are implied by those of \autoref{cor:kodaira_gorenstein}, and so the conclusions follow.
\end{proof}

\bibliographystyle{acm}
\bibliography{includeNice}

\end{document}